\documentclass[]{amsart}

\usepackage{amssymb}
\usepackage{amsfonts}
\usepackage{amsmath}
\usepackage{amsthm}
\usepackage{mathtools}
\usepackage{graphicx}
\usepackage{mathrsfs}
\usepackage{dsfont}
\usepackage{amscd}
\usepackage{multirow}
\usepackage[all]{xy} \normalfont
\usepackage[T1]{fontenc}
\usepackage{calligra}
\usepackage{verbatim}
\usepackage[usenames,dvipsnames]{color}
\usepackage[colorlinks=true,linkcolor=Blue,citecolor=Violet]{hyperref}
\usepackage{enumerate}
\usepackage{nicematrix}

\newcommand{\SiepG}{{Sierpi{\'n}ski gasket}}
\newcommand{\SG}[1]{\mathcal{SG}_{#1}}

\newcommand{\N}{{\mathds{N}}}
\newcommand{\Z}{{\mathds{Z}}}

\newcommand{\R}{{\mathds{R}}}
\newcommand{\C}{{\mathds{C}}}
\newcommand{\T}{{\mathds{T}}}

\newcommand{\D}{{\mathfrak{D}}}
\newcommand{\A}{{\mathfrak{A}}}
\newcommand{\B}{{\mathfrak{B}}}

\newcommand{\Lip}{{\mathsf{L}}}
\newcommand{\TLip}{{\mathsf{S}}}
\newcommand{\Hilbert}{{\mathscr{H}}}

\newcommand{\dpropinquity}[1]{{\mathsf{\Lambda}^\ast_{#1}}}

\newcommand{\dmodpropinquity}[1]{{\mathsf{\Lambda}^{\ast\mathsf{mod}}_{{#1},{#1}_{\mathsf{inner}}}}}
\newcommand{\dmetpropinquity}[1]{{\mathsf{\Lambda}^{\ast\mathsf{met}}_{{#1},{#1}_{\mathsf{inner}},{#1}_{\mathsf{mod}}}}}
\newcommand{\dcovmetpropinquity}[1]{{\mathsf{\Lambda}^{\ast\mathsf{met,cov}}_{{#1},{#1}_{\mathsf{inner}},{#1}_{\mathsf{mod}}}}}
\newcommand{\dcovmodpropinquity}[1]{{\mathsf{\Lambda}^{\ast\mathsf{mod,cov}}_{{#1},{#1}_{\mathsf{inner}}}}}
\newcommand{\spectralpropinquity}[1]{{\mathsf{\Lambda}^{\mathsf{spec}}_{{#1}, {#1}_{\mathsf{inner}},{#1}_{\mathsf{mod}} }}}
\newcommand{\lspectralpropinquity}{{\mathsf{\Lambda}^{\mathsf{spec}}}}

\newcommand{\Kantorovich}[1]{{\mathsf{mk}_{#1}}}
\newcommand{\KantorovichMod}[1]{{\mathsf{k}_{#1}}}

\newcommand{\KantorovichAlt}[1]{{\mathrm{mk^{\mathrm{alt}}_{#1}}}}

\newcommand{\Haus}[1]{{\mathsf{Haus}_{#1}}}

\newcommand{\StateSpace}{{\mathscr{S}}}
\newcommand{\ModStateSpace}{\widetilde{\mathscr{S}}}
\newcommand{\FalseDual}[1]{{\mathscr{D}\left({#1}\right)}}

\newcommand{\MongeKant}{{Mon\-ge-Kan\-to\-ro\-vich metric}}

\newcommand{\Lqcms}{{\JLL} quantum compact metric space}
\newcommand{\Qqcms}[1]{{$#1$}--\gQqcms}
\newcommand{\gQqcms}{Leibniz
  quantum compact metric space}

\newcommand{\gQVB}{metrized quantum vector bundle}
\newcommand{\QVB}[1]{$\left({#1},{#1}_{\mathsf{inner}}\right)$--\gQVB}
\newcommand{\LMVB}{Leibniz \gMVB}
\newcommand{\gMVB}{metrical
  $C^\ast$-correspondence}
\newcommand{\MVB}[1]{$\left({#1},{#1}_{\mathsf{inner}},{#1}_{\mathsf{mod}}\right)$--\gMVB}

\newcommand{\mvb}[3]{{\mathrm{mcc}\left({#1},{#2},{#3}\right)}}
\newcommand{\umvb}[3]{{\mathrm{umcc}\left({#1},{#2},{#3}\right)}}
\newcommand{\qcms}{quantum compact metric space}
\newcommand{\lcqms}{quantum locally compact metric space}

\newcommand{\unit}{1}

\newcommand{\sa}[1]{{\mathfrak{sa}\left({#1}\right)}}

\newcommand{\UIso}[4]{{\mathsf{UIso}_{#1}\left({#2}\rightarrow{#3}\middle\vert{#4}\right)}}

\newcommand{\inner}[3]{{\left<{#1},{#2}\right>_{#3}}}
\newcommand{\JLL}{Lei\-bniz}

\newcommand{\dom}[1]{{\operatorname*{dom}\left({#1}\right)}}
\newcommand{\codom}[1]{{\operatorname*{codom}\left({#1}\right)}}
\newcommand{\diam}[2]{{\mathrm{diam}\left({#1},{#2}\right)}}

\newcommand{\norm}[2]{\left\|{#1}\right\|_{#2}}
\newcommand{\tunnelset}[4]{{\text{\calligra
      Tunnels}\,\left[{#1}\xrightarrow{#3} {#2}\middle\vert
      {#4} \right]}}
\newcommand{\tunnelsetltd}[3]{{\text{\calligra
      Tunnels}\,\left[{#1}\stackrel{#3}{\longrightarrow}{#2} \right]}}


\newcommand{\targetsettunnel}[3]{{\mathfrak{t}_{#1}\left({#2}\middle\vert{#3}\right)}}

 \newcommand{\CDN}{{\mathsf{D}}}
\newcommand{\TDN}{{\mathsf{T}}}

\newcommand{\worknote}[1]{} 
\newcommand{\opnorm}[3]{{\left|\mkern-1.5mu\left|\mkern-1.5mu\left|
          {#1}
        \right|\mkern-1.5mu\right|\mkern-1.5mu\right|_{#3}^{#2}}}

\newcommand{\tunnelreach}[2]{{\rho\left({#1}\middle\vert{#2}\right)}}
\newcommand{\tunnelmodreach}[2]{{\rho_m\left({#1}\middle\vert{#2}\right)}}

\newcommand{\tunnelmagnitude}[2]{{\mu\left({#1}\middle\vert{#2}\right)}}
\newcommand{\tunnelmodmagnitude}[2]{{\mu_m\left({#1}\middle\vert{#2}\right)}}

\newcommand{\tunnelextent}[1]{{\chi\left({#1}\right)}}

\newcommand{\alg}[1]{{\mathfrak{#1}}}

\newcommand{\module}[1]{{\mathscr{#1}}}

\newcommand{\resolvent}[2]{\mathcal{R}\left({#1} ; {#2} \right)}

\newcommand{\CMS}[8]{ \left( \begin{array}{lllll}
                               #1 & #2 & #3 & #4 \\
                               #5 & #6 & #7 & #8
                             \end{array}\right)}
                        
\newcommand{\AdRep}[1]{{\mathrm{Ad}_{#1}}}

\theoremstyle{plain}
\newtheorem{theorem}{Theorem}[section]

\newtheorem{proposition}[theorem]{Proposition}

\newtheorem{theorem-definition}[theorem]{Theorem-Definition}

\theoremstyle{definition}
\newtheorem{definition}[theorem]{Definition}

\newtheorem{convention}[theorem]{Convention}

\theoremstyle{remark}

\newtheorem{remark}[theorem]{Remark}

\newtheorem{notation}[theorem]{Notation}

\renewcommand{\geq}{\geqslant}
\renewcommand{\leq}{\leqslant}


\numberwithin{equation}{section}

\allowdisplaybreaks[4]

\hyphenation{Gro-mov}
\hyphenation{Haus-dorff}
\begin{document}

\title{The Gromov-Hausdorff propinquity for
  metric Spectral Triples}
\author{Fr\'{e}d\'{e}ric
  Latr\'{e}moli\`{e}re}
\email{frederic@math.du.edu}
\urladdr{http://www.math.du.edu/\symbol{126}frederic}
\address{Department of Mathematics \\
  University of Denver \\ Denver CO 80208}

\date{\today} \subjclass[2000]{Primary:
  46L89, 46L30, 58B34.}
\keywords{Noncommutative metric geometry,
  Gromov-Hausdorff convergence, Spectral
  Triples, Monge-Kantorovich distance,
  Quantum Metric Spaces, Lip-norms, proper
  monoids, Gromov-Hausdorff distance for
  proper monoids, C*-dynamical systems.}
\thanks{This work is part of the project
  supported by the grant
  H2020-MSCA-RISE-2015-691246-QUANTUM
  DYNAMICS and grant \#3542/H2020/2016/2 of
  the Polish Ministry of Science and Higher
  Education.}

\begin{abstract}
  We define a metric on the class of metric spectral triples, which is
  null exactly between unitarily equivalent spectral triples. This
  metric dominates the propinquity, and thus implies metric
  convergence of the {\qcms s} induced by metric spectral triples. In
  the process of our construction, we also introduce the covariant
  modular propinquity, as a key component for the definition of the
  spectral propinquity.
\end{abstract}
\maketitle



\section{Introduction}

The primary purpose of our research is to bring forth a new approach to problems from mathematical physics and noncommutative geometry by constructing an analytic framework around Gromov-Hausdorff-like hypertopologies on
classes of quantum spaces. The central themes of this
project have been the construction of noncommutative analogues of the
Gromov-Hausdorff distance \cite{Gromov81,Gromov} adapted to
C*-algebras \cite{Latremoliere05, Latremoliere13, Latremoliere13b,
  Latremoliere13c, Latremoliere14, Latremoliere15, Latremoliere15b,
  Latremoliere15c} and the initiation and advancement of a theory of
metric convergence for various structures over $C^\ast$-algebras, such
as modules \cite{Latremoliere16c,Latremoliere17a,Latremoliere18a} and
group actions
\cite{Latremoliere17c,Latremoliere18b,Latremoliere18c}. The present
work introduces a distance on the space of \emph{metric spectral
  triples}, strongly motivated by potential applications to
mathematical physics, such as the convergence of matrix models to some
limit \cite{Latremoliere21a}.

Our motivation for this project emerges from four connected
observations. First, a recurrent theme in mathematical physics is the
construction of quantum models as limits of some discrete, often even
finite models, when some metric on the spaces are involved. Second,
certain approaches to quantum cosmology and quantum gravity involve an
as-of-yet not fully understood geometry on the space of all
space-times \cite{Wheeler68}. Notably, the first occurrence and study
of the Gromov-Hausdorff distance was actually due to Edwards
\cite{Edwards75}, motivated by Wheeler's superspace approach to
quantum gravity. Third, a set of converging ideas in quantum physics
suggests the possibility that at the Planck scale, space-time may be
best described as a noncommutative space \cite{Doplicher95}, and
metric considerations have become a component of this research,
including many references to our work
\cite{Wallet12,Wallet15,dandrea13}. Fourth, remarkable new
developments in geometry arose from the use of the Gromov-Hausdorff
distance and the metric properties of manifolds and related spaces. We
thus aim at developing a theory which allow us to formalize physics
problems and problems from noncommutative geometry at the level of
hyperspaces of quantum metric spaces and spectral triples, so as to
apply to them new analytic techniques.

\emph{Spectral triples}, as introduced by Connes
\cite{Connes89,Connes} as early as 1985 in his lectures at the
Coll{\`e}ge de France, have emerged as the preferred means to
generalize Riemannian geometry to the noncommutative realm. Their
importance lies in their well-established power in generalizing, in
particular, spectral geometry to various new situations, from the
study of the spaces of leaves of foliations, to defining a geometry on quantum
tori, quantum spheres, and other quantum spaces, which, in turn, have
found applications in mathematical physics. Our perspective on
spectral triples provides a new direction for investigation, by
focusing on the metric aspects of noncommutative geometry, and
studying \emph{spaces of spectral triples}.

The importance of our work in this paper is to be found in the
applications it opens. Our present work puts a topology on the class
of all metric spectral triples. Therefore, it becomes possible to
address questions such as perturbations of metric within an analytical
framework --- quantifying the scale of perturbations, including the
effects of changes of underlying topologies, and studying topological
properties of classes of quantum spaces obtained from perturbations,
such as compactness
\cite{Latremoliere15c,Latremoliere15d,Latremoliere15}. We can also
discuss approximations of spectral triples by other spectral triples,
for instance spectral triples on matrix models approximating spectral
triples on infinite dimensional C*-algebras \cite{Latremoliere13c} ---
for instance, physically motivated models over fuzzy tori converging
to quantum torus \cite{Latremoliere21a}. We can also discuss time
evolution of quantum geometries, or any other dynamical process or
flows where both the quantum metric and the quantum topology are
allowed to change, all within a natural framework based on metric
space theory. While approximations of differential structures is
generally delicate and at times rigid, the flexibility offered by
by spectral triples and by introducing noncommutative spaces open new
possibilities for interesting research. Our project even opens new directions for research within classical metric geometry, such as in the study of fractals, as seen in
\cite{Latremoliere20a}.

\medskip

Connes' original introduction of spectral triples \cite{Connes89} was
actually instrumental in his introduction of compact quantum metric
spaces. Spectral triples are far-reaching abstractions of the Dirac
operator acting on the smooth sections of a vector bundle over a
Riemann spin manifold. There are varying definitions of spectral
triples in the literature, and for our purpose, we start with what
seems to be a good common core met by almost all definitions of which we are
aware.

\begin{definition}[\cite{Connes}]\label{spectral-triple-def}
  A \emph{spectral triple} $(\A,\Hilbert,D)$ consists of a unital
  C*-algebra $\A$, a Hilbert space $\Hilbert$, and a self-adjoint
  operator $D$ defined on a dense linear subspace $\dom{D}$ of
  $\Hilbert$, such that there exists a unital faithful *-representation of
  $\A$ on $\Hilbert$ (we will identify $\A$ with a C*-subalgebra of
  the algebra $\B(\Hilbert)$ of bounded linear operators on
  $\Hilbert$), and
  \begin{enumerate}
  \item $D+i$ has a compact inverse,
  \item the set of $a\in\A$ such that:
    \begin{equation*}
      a \cdot \dom{D} \subseteq \dom{D} 
    \end{equation*}
    and
    \begin{equation*}
      [D,a] \text{ is closeable, with bounded closure}
    \end{equation*}
    is dense in $\A$.
  \end{enumerate}
\end{definition}

Note that if $T$ is the inverse of $D+i$, then $T$ is compact if and
only if $T^\ast T$ is compact. Thus $D+i$ has compact inverse if and
only if $(1 + D^2)$ has a compact inverse.

\begin{remark}
  We follow the convention in the literature on spectral triples not
  to introduce a notation for the representation of the C*-algebra
  $\A$ on the Hilbert space $\Hilbert$ in a spectral triple
  $(\A,\Hilbert,D)$ --- this may at times require some care in reading
  some of our statements but it also is the standard adopted in the
  field.

  Moreover, whenever no confusion may arise, we will identify a bounded operator $T$ from $\dom{D}$ to $\Hilbert$ with its unique uniformly continuous extension to $\Hilbert$.
\end{remark}

Spectral triples induce an extended pseudo-metric on the state space
of their underlying C*-algebras, called the Connes metric. Of prime
interest in noncommutative geometry are the spectral triples whose
Connes' metric induces the weak* topology. There are several ways to
understand this focus, including the facts that, if the Connes metric
is indeed an extended metric, then the weak* topology is the weakest
topology it can induce, and it is the only compact one; moreover the
weak* topology is the natural topology on the state space from a
physical perspective. Spectral triples whose Connes' metric metrizes
the weak* topology will be called \emph{metric spectral triples}.

Now, as we shall see, this additional topological requirement on
spectral triples means that such metric spectral triples induce a
structure of \emph{\qcms s}. A {\qcms} is a noncommutative analogue of
the algebra of Lipschitz functions over a compact metric space, and is
the basic object of study of our project in noncommutative
geometry. The definition of {\qcms s} has evolved from Connes'
original proposition \cite{Connes89}, motivated by spectral triples,
to the current version we now state, owing mostly to Rieffel's
observation \cite{Rieffel98a,Rieffel99} that the {\MongeKant} on
quantum metric spaces should share a key topological property with the
original {\MongeKant} in the classical picture. Our contribution to
the following definition, from \cite{Latremoliere13,Latremoliere15},
is to impose a form of a Leibniz relation, as a key property for our
work on the propinquity, and a notion of {\lcqms}
\cite{Latremoliere12b}. We start by introducing the notion of a
{\qcms}.

\begin{notation}
  If $E$ is some normed vector space, then we denote its norm by
  $\norm{\cdot}{E}$ unless otherwise specified. For a C*-algebra $\A$,
  we write $\sa{\A}$ for the subspace of self-adjoint elements in
  $\A$, and $\StateSpace(\A)$ for the state space of $\A$. If $\A$ is
  unital, then its unit is denoted by $\unit_\A$.

  If $a\in\A$, with $\A$ a C*-algebra, then
  $\Re a = \frac{a+a^\ast}{2} \in \sa{\A}$ and
  $\Im a = \frac{a-a^\ast}{2i} \in \sa{\A}$.
\end{notation}

\begin{definition}
  We endow $[0,\infty)^4$ with the product order defined by setting, for all $(x_1,x_2,x_3,x_4)$,$(x'_1,x'_2,x'_3,x'_4)$ in $[0,\infty)^4$,
  \begin{equation*}
    (x_1,x_2,x_3,x_4) \leq (x'_1,x'_2,x'_3,x'_4) \iff \forall j\in\{1,2,3,4\} \quad x_j\leq x'_j \text.
  \end{equation*}
  
  A function $F : [0,\infty)^4\rightarrow[0,\infty)$ is
  \emph{permissible} when $F$ is weakly increasing from the product
  order on $[0,\infty)^4$ and, for all $x,y,l_x,l_y\geq 0$ we have
  $F(x,y,l_x,l_y) \geq x l_y + y l_x$.
\end{definition}

\begin{definition}[\cite{Connes89,Rieffel98a,Rieffel99,Rieffel10c,Latremoliere13,Latremoliere15}]\label{qcms-def}
  For a permissible function $F$, an \emph{\Qqcms{F}} $(\A,\Lip)$ is a
  unital C*-algebra $\A$ and a seminorm $\Lip$ defined on a dense
  Jordan-Lie subalgebra $\dom{\Lip}$ of $\sa{\A}$ such that:
  \begin{enumerate}
  \item
    $\left\{ a \in \dom{\Lip} : \Lip(a) = 0 \right\} = \R\unit_\A$,
  \item the {\MongeKant} $\Kantorovich{\Lip}$ defined between any two
    states $\varphi,\psi \in \StateSpace(\A)$ by:
    \begin{equation*}
      \Kantorovich{\Lip}(\varphi,\psi) = \sup\left\{ \left|\varphi(a) - \psi(a)\right| : a\in\dom{\Lip}, \Lip(a) \leq 1 \right\}
    \end{equation*}
    metrizes the weak* topology on $\StateSpace(\A)$,
  \item $\Lip$ is lower semi-continuous with respect to
    $\norm{\cdot}{\A}$, i.e. $\{ a\in\dom{\Lip} : \Lip(a) \leq 1\}$ is
    closed for $\norm{\cdot}{\A}$,
  \item if $a,b \in \dom{\Lip}$, then
    $\frac{ab+ba}{2},\frac{ab-ba}{2i} \in\dom{\Lip}$. and
    \begin{equation*}
      \Lip\left(\frac{a b + b a}{2}\right), \Lip\left(\frac{a b - b a}{2i}\right) \leq F(\norm{a}{\A},\norm{b}{\A},\Lip(a),\Lip(b))\text.
    \end{equation*}
  \end{enumerate}

  A {\Lqcms} $(\A,\Lip)$ is a {\Qqcms{F}} for
  $F : x,y,l_x,l_y \mapsto x l_y + y l_x$, i.e. for all
  $a,b \in \dom{\Lip}$, we have
  $\Lip\left(\Re(a b)\right), \Lip\left(\Im(a b)\right) \leq
  \norm{a}{\A}\Lip(b) + \norm{b}{\A} \Lip(a)$.
\end{definition}

We will use the following common condition, applied to L-seminorms and
other seminorms, as we did in
\cite{Latremoliere13,Latremoliere13b,Latremoliere14,Latremoliere16c,Latremoliere18d}.
\begin{convention}\label{seminorm-convention}
  If $E$ is a vector space, and if $L$ is a seminorm defined on a
  subspace $\dom{L}$ of $E$, then we set $L(x) = \infty$ for all
  $x\in E\setminus \dom{L}$. In particular,
  $\dom{L} = \{ x \in E : L(x) < \infty \}$. We use the usual
  conventions used in measure theory when dealing with $\infty$ here,
  i.e. $\infty+x=x+\infty=\infty$ for all $x\in[0,\infty]$,
  $x\infty=\infty$ for $x\in(0,\infty)$, $0\infty = 0$, and
  $x < \infty$ for all $x\in(0,\infty)$.
\end{convention}

Now, a metric spectral triple is formally defined as follows. Our
focus for this paper will be the geometry of the space of metric
spectral triples.

\begin{notation}
  We denote the norm of a linear map $T : E \rightarrow F$ between
  normed vector spaces $E$ and $F$ by $\opnorm{T}{E}{F}$, or simply
  $\opnorm{T}{}{F}$ if $E=F$. 
\end{notation}

\begin{definition}\label{metric-spectral-triple-def}
  A \emph{metric} spectral triple $(\A,\Hilbert,D)$ is a spectral
  triple such that, if we set:
  \begin{multline*}
    \forall \varphi,\psi  \in \StateSpace(\A) \quad \Kantorovich{D}(\varphi,\psi) = \sup\Big\{ \left|\varphi(a) - \psi(a)\right| : \\
    a\in\sa{\A}, a\cdot\dom{D}\subseteq\dom{D},
    \opnorm{[D,a]}{}{\Hilbert} \leq 1 \Big\}
  \end{multline*}
  then the metric $\Kantorovich{D}$ metrizes the weak* topology on the
  state space $\StateSpace(\A)$ of $\A$.
\end{definition}

Metric spectral triples do give rise to {\qcms s} in a natural
fashion, which was the original prescription of Connes \cite{Connes89}. To any
spectral triple, we can associate a seminorm which will be our L-seminorm canonically induced by a metric spectral triple. As the precise definition of {\qcms} has evolved, we include the full proof of the following proposition, and we note that some other propositions in the same vein can be found in \cite[Proposition 3.7]{Rieffel99}, \cite[Lemma 2.3,2.4]{Kaad18}, \cite{Christensen15}.

\begin{notation}\label{LipD-notation}
  If $(\A,\Hilbert,D)$ is a spectral triple, then we set
  \begin{equation*}
    \dom{\Lip_D} = \left\{ a \in \sa{\A} : a\cdot\dom{D}\subseteq\dom{D} \text{ and }[D,a]\text{ is bounded} \right\}\text,
  \end{equation*}
  and we denote by $\Lip_{D}$ the seminorm defined for all
  $a\in \dom{\Lip_D}$ by
  \begin{equation*}
    \Lip_{D}(a) = \opnorm{[D,a]}{}{\Hilbert}\text.
  \end{equation*}
  Note that with our Convention (\ref{seminorm-convention}),
  $\Lip_\D(a) = \infty$ whenever $a\in\sa{\A}\setminus\dom{\Lip_D}$.
\end{notation}

\begin{proposition}\label{spectral-metric-prop}
  Let $(\A,\Hilbert,D)$ be a spectral triple. The spectral triple
  $(\A,\Hilbert,D)$ is metric if and only if $(\A,\Lip_{D})$ is a
  {\Lqcms}.
\end{proposition}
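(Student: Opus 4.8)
The crux of the argument is that condition (2) of Definition~\ref{qcms-def}, applied to the seminorm $\Lip_{D}$, is \emph{verbatim} the defining condition of a metric spectral triple: comparing the two formulas, $\Kantorovich{\Lip_{D}}$ and $\Kantorovich{D}$ are the same function, since both are the supremum of $|\varphi(a)-\psi(a)|$ over self-adjoint $a$ with $\opnorm{[D,a]}{}{\Hilbert}\leq 1$. Consequently the backward implication is immediate: if $(\A,\Lip_{D})$ is a \Lqcms, then in particular its \MongeKant\ metrizes the weak* topology, which is exactly the assertion that $(\A,\Hilbert,D)$ is metric. For the forward implication, the plan is to verify that the remaining requirements of Definition~\ref{qcms-def} --- that $\dom{\Lip_{D}}$ be a dense Jordan-Lie subalgebra of $\sa{\A}$, the Leibniz inequality (4), the lower semicontinuity (3), and the null-space condition (1) --- all hold, the first three for \emph{any} spectral triple and the last as a consequence of condition (2).

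First I would record that $\dom{\Lip_{D}}$ is precisely the set of self-adjoint $a$ for which $a\cdot\dom{D}\subseteq\dom{D}$ and $[D,a]$ has bounded closure. The Leibniz inequality then follows from the derivation identity
\[
  [D,ab] = [D,a]\,b + a\,[D,b] \quad\text{on } \dom{D},
\]
which gives $\opnorm{[D,ab]}{}{\Hilbert}\leq \opnorm{[D,a]}{}{\Hilbert}\norm{b}{\A} + \norm{a}{\A}\opnorm{[D,b]}{}{\Hilbert}$; symmetrizing over $ab$ and $ba$ yields the bound for $\Re(ab)=\frac{ab+ba}{2}$ and $\Im(ab)=\frac{ab-ba}{2i}$, which in particular shows $\dom{\Lip_{D}}$ is closed under the Jordan and Lie products and hence is a Jordan-Lie subalgebra. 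For density, I would invoke axiom (2) of Definition~\ref{spectral-triple-def}: the $\ast$-subalgebra $\mathcal{A}_{D}$ of elements with bounded commutator is dense in $\A$ (it is $\ast$-closed because $[D,a^\ast]=-[D,a]^\ast$ for self-adjoint $D$), and taking real and imaginary parts shows $\dom{\Lip_{D}}=\mathcal{A}_{D}\cap\sa{\A}$ is dense in $\sa{\A}$.

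The main obstacle is the lower semicontinuity (3), which is where I must actually use that $D$ is self-adjoint, hence closed. Given $a_n\to a$ in norm with $\sup_n\Lip_{D}(a_n)=M<\infty$, I would fix $\xi\in\dom{D}$ and observe that $\norm{D(a_n\xi)}{\Hilbert}\leq M\norm{\xi}{\Hilbert}+\norm{a_n}{\A}\norm{D\xi}{\Hilbert}$ stays bounded, while $a_n\xi\to a\xi$; extracting a weakly convergent subsequence of $(D(a_n\xi))_n$ and using that the graph of the closed operator $D$ is weakly closed, I would conclude $a\xi\in\dom{D}$, and then $\opnorm{[D,a]}{}{\Hilbert}\leq M$ by weak lower semicontinuity of the norm. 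This establishes $\Lip_{D}(a)\leq M$, i.e. that each ball $\{a:\Lip_{D}(a)\leq M\}$ is norm-closed.

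Finally, condition (1) follows from condition (2): since states separate self-adjoint elements, any self-adjoint $b\notin\R\unit_\A$ is distinguished by two states $\varphi,\psi$ with $\varphi(b)\neq\psi(b)$; if $\Lip_{D}(b)=0$ then $tb$ lies in the unit ball of $\Lip_{D}$ for all $t\in\R$, forcing $\Kantorovich{\Lip_{D}}(\varphi,\psi)=\infty$ and contradicting that $\Kantorovich{\Lip_{D}}$ is a metric. As $[D,\unit_\A]=0$ gives the reverse inclusion, this yields $\{a\in\dom{\Lip_{D}}:\Lip_{D}(a)=0\}=\R\unit_\A$. Collecting these facts shows $(\A,\Lip_{D})$ satisfies all four conditions with the \JLL\ choice of $F$, completing the forward implication.
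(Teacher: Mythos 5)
Your proof is correct and follows essentially the same route as the paper's: backward implication by identifying $\Kantorovich{\Lip_D}$ with $\Kantorovich{D}$, then density from axiom (2) of the spectral triple via $\ast$-closedness of the bounded-commutator algebra, the Leibniz property from the derivation identity, and the null-space condition from finiteness of the Monge-Kantorovich metric. The only (harmless) divergence is in the lower semicontinuity step, where you extract a weakly convergent subsequence of $\left(D(a_n\xi)\right)_n$ and use weak closedness of the graph of $D$, whereas the paper shows $\zeta\mapsto\inner{a\xi}{D\zeta}{\Hilbert}$ is bounded so that $a\xi\in\dom{D^\ast}=\dom{D}$ and then passes to the limit in the sesquilinear form $\inner{[D,a_n]\xi}{\zeta}{\Hilbert}$; both arguments rest on the same fact, namely that $D$ is self-adjoint and hence closed.
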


\begin{proof}
  If $(\A,\Lip_D)$ is a {\Lqcms}, then by Definition
  (\ref{metric-spectral-triple-def}), the spectral triple
  $(\A,\Hilbert,D)$ is metric.

  Let us now assume that $(\A,\Hilbert,D)$ is a metric spectral
  triple. By Notation (\ref{LipD-notation}), the domain of $\Lip_D$ is:
  \begin{equation*}
    \left\{ a \in \sa{\A} : a\cdot\dom{D} \subseteq\dom{D} \text{ and }\opnorm{[D,a]}{}{\Hilbert}<\infty \right\}\text{.}
  \end{equation*}
  By Definition (\ref{spectral-triple-def}), the set:
  \begin{equation*}
    \mathscr{D} = \left\{ a \in \A : a\cdot\dom{D} \subseteq\dom{D} \text{ and }\opnorm{[D,a]}{}{\Hilbert}<\infty \right\}
  \end{equation*}
  is norm dense in $\A$. If $a\in\sa{\A}$, then there exists
  $(a_n)_{n\in\N}$ in $\mathscr{D}^\N$ converging to $a$ in norm. Now,
  we prove that if $b \in \mathscr{D}$ then $b^\ast \in \mathscr{D}$
  as well. Let $b\in\mathscr{D}$. if $\xi,\zeta\in \dom{D}$, then:
  \begin{align*}
    \inner{b^\ast\xi}{D\zeta}{\Hilbert} 
    &= \inner{\xi}{bD\zeta}{\Hilbert} \\
    &= \inner{\xi}{Db\zeta}{\Hilbert} - \inner{\xi}{[D,b]\zeta}{\Hilbert}\\
    &= \inner{D\xi}{b\zeta}{\Hilbert} - \inner{\xi}{[D,b]\zeta}{\Hilbert} \text{.}
  \end{align*} 
  Now, since $\xi\in\dom{D}$, the linear map
  $\zeta\in\dom{D}\mapsto \inner{D\xi}{b\zeta}{\Hilbert}$ is continuous, and
  since $[D,b]$ is bounded, the linear map
  $\zeta\in\dom{D} \mapsto \inner{\xi}{[D,b]\zeta}{\Hilbert}$ is also
  continuous. Hence
  $\zeta\in\Hilbert\mapsto \inner{b^\ast\xi}{D\zeta}{\Hilbert}$ is
  continuous, and thus $b^\ast \xi \in \dom{D^\ast}=\dom{D}$. Now, on
  $\dom{D}$, we observe that
  $[D,b^\ast] = Db^\ast - b^\ast D= (b D - D b)^\ast = (-[D,b])^\ast$
  as $D$ is self-adjoint, so $b^\ast \in \mathscr{D}$.

  It is immediate to check that $\mathscr{D}$ is a linear space, and
  thus in particular, for all $n\in\N$, we have
  $\Re a_n = \frac{a_n + a_n^\ast}{2} \in \dom{\Lip_D}$, and of course
  as $a\in\sa{\A}$, we have by continuity of $\Re$ that
  $a = \Re a = \lim_{n\rightarrow\infty}\Re a_n$, thus proving that
  $\dom{\Lip_D}$ is dense in $\sa{\A}$.

  By Definition (\ref{metric-spectral-triple-def}), the {\MongeKant}
  $\Kantorovich{\Lip_D}$ metrizes the weak* topology. In particular,
  as a metric , it is finite between any two states of $\A$. Let
  $a\in\sa{\A}$ with $\Lip_D(a) = 0$. Let
  $\varphi,\psi \in \StateSpace(\A)$. We have, by Definition
  (\ref{qcms-def}):
  \begin{equation*}
    0 \leq \left| \varphi(a) - \psi(a) \right| \leq \Lip_D(a) \Kantorovich{\Lip_D}(\varphi,\psi) = 0
  \end{equation*}
  and thus $\varphi(a - \psi(a)\unit_\A) = 0$ for all
  $\varphi,\psi \in \StateSpace(\A)$. Thus (as $a\in\sa{\A}$), if we
  fix $\psi \in \StateSpace(\A)$:
  \begin{equation*}
    \norm{a-\psi(a)\unit_\A}{\A} = \sup_{\varphi\in\StateSpace(\A)}|\varphi(a-\psi(a)\unit_\A)| = 0
  \end{equation*}
  so $a = \psi(a)\unit_\A$, i.e.
  $\{a\in\sa{\A} : \Lip_D(a) = 0 \} \subseteq \R\unit_\A$. On the
  other hand, $\Lip_D(\unit_\A) = 0$ by construction, so
  $\{a\in\sa{\A}:\Lip_D(a) = 0\} = \R\unit_\A$, as desired.

  We now check that $\Lip_D$ is lower semicontinuous. Let
  $(a_n)_{n\in\N}$ be a sequence in $\dom{\Lip_D}$ with
  $\Lip_D(a_n) \leq 1$ converging in norm to $a\in\sa{\A}$. Let
  $\xi\in\dom{D}$ and let $\zeta\in\dom{D}$. For any $n\in\N$:
  \begin{align*}
    \inner{a_n\xi}{D\zeta}{\Hilbert}
    &= \inner{\xi}{a_nD\zeta}{\Hilbert} \\
    &= \inner{\xi}{D a_n  \zeta}{\Hilbert} - \inner{\xi}{[D, a_n]\zeta}{\Hilbert} \\
    &= \inner{D\xi}{a_n\zeta}{\Hilbert} - \inner{\xi}{[D,a_n]\zeta}{\Hilbert} \\
  \end{align*}
  and therefore:
  \begin{align*}
    \left|\inner{a\xi}{D\zeta}{\Hilbert}\right| 
    &= \lim_{n\rightarrow\infty}\left|\inner{a_n\xi}{D\zeta}{\Hilbert}\right|\\
    &\leq \limsup_{n\rightarrow\infty} \left( \left| \inner{D\xi}{a_n\zeta}{\Hilbert}\right| + \left|\inner{\xi}{[D,a_n]\zeta}{\Hilbert}\right|\right)\\
    &\leq \left|\inner{D\xi}{a\zeta}{\Hilbert}\right| + \norm{\xi}{\Hilbert}\norm{\zeta}{\Hilbert} \\
    &\leq \norm{\zeta}{\Hilbert}\left( \norm{D\xi}{\Hilbert}\norm{a}{\A} + \norm{\xi}{\Hilbert}  \right)\text{.}
  \end{align*} 
  So the function $\zeta\in\dom{D} \mapsto \inner{a\xi}{D\zeta}{\Hilbert}$ is
  continuous, and thus $a\xi\in\dom{D}$. Thus
  $a\cdot\dom{D} \subseteq\dom{D}$ as $\xi \in \dom{D}$ was
  arbitrary. We can therefore apply \cite[Proposition 3.7]{Rieffel99},
  whose argument we now briefly recall. If $\xi,\zeta\in\dom{D}$ with
  $\norm{\xi}{\Hilbert}\leq 1$ and $\norm{\zeta}{\Hilbert}\leq 1$,
  then:
  \begin{multline}\label{metric-spectral-eq-1}
      1\geq \left|\inner{[D,a_n]\xi}{\zeta}{\Hilbert} \right|
      = \left|\inner{a_n\xi}{D\zeta}{\Hilbert} - \inner{D\xi}{a_n\zeta}{\Hilbert} \right| \\
      \xrightarrow{n\rightarrow\infty} \left| \inner{a\xi}{D\zeta}{\Hilbert} - \inner{D\xi}{a\zeta}{\Hilbert}\right| = \left|\inner{[D,a]\xi}{\zeta}{\Hilbert}\right|\text{.}
    \end{multline}
  Since $\dom{D}$ is dense in $\Hilbert$ and since, by Expression (\ref{metric-spectral-eq-1}), for all $\xi,\zeta\in\dom{D}$, we have proven that $\left|\inner{[D,a]\xi}{\zeta}{\Hilbert}\right| \leq \norm{\xi}{\Hilbert}\norm{\zeta}{\Hilbert}$, we conclude that $[D,a]$ is bounded with norm $1$ on $\dom{D}$, and thus extends to a bounded operator of norm at most $1$ on $\Hilbert$.
  
  Thus $\{a\in\sa{\A} : \Lip_D(a) \leq 1\}$ is indeed normed closed. As
  $\Lip_D$ is a seminorm, this implies that it is lower semi-continuous with respect to $\norm{\cdot}{\A}$.

  Last, $\Lip_D$ satisfies the Leibniz inequality since it is the norm of a
  derivation. First, we note that $\mathscr{D}$ is indeed an
  algebra. If $a,b \in \mathscr{D}$ then, first, since
  $b\cdot\dom{D}\subseteq\dom{D}$, we also have
  $ab\cdot\dom{D} \subseteq a\cdot\dom{D} \subseteq\dom{D}$. Moreover,
  if $\xi,\zeta \in \dom{D}$, then:
  \begin{align*}
    \inner{Dab\xi - abD\xi}{\zeta}{\Hilbert} 
    &= \inner{D ab \xi - aDb\xi}{\zeta}{\Hilbert} + \inner{aDb\xi - abD\xi}{\zeta}{\Hilbert} \\
    &= \inner{[D,a] b\xi}{\zeta}{\Hilbert} + \inner{a[D,b]\xi}{\zeta}{\Hilbert}
  \end{align*}
  and thus, as operators on $\dom{D}$, we conclude
  $[D,ab]=a[D,b] + [D,a]b$. Therefore, for all $a,b \in \dom{\Lip_D}$:
  \begin{align*}
    \opnorm{[D,ab]}{}{\Hilbert}
    &= \opnorm{[D,a]b + a[D,b]}{}{\Hilbert} \\
    &\leq \opnorm{[D,a]}{}{\Hilbert} \norm{b}{\A} + \norm{a}{\A}\opnorm{[D,b]}{}{\Hilbert} \\
    &= \Lip_D(a) \norm{b}{\A} + \norm{a}{\A}\Lip_D(b) \text{.}
  \end{align*}
  Therefore, we conclude, for all $a,b \in \dom{\Lip_D}$:
  \begin{align*}
    \Lip\left(\frac{ab+ba}{2}\right)
    &= \opnorm{\left[D,\frac{ab+ba}{2}\right]}{}{\Hilbert} \\
    &\leq \frac{1}{2}\left( \opnorm{[D,ab]}{}{\Hilbert} + \opnorm{[D,ba]}{}{\Hilbert}  \right) \\
    &\leq \Lip_D(a) \norm{b}{\A} + \norm{a}{\A} \Lip_D(b) \text.
  \end{align*}
  A similar argument shows that $\Lip_D\left(\frac{ab-ba}{2i}\right) \leq \Lip_D(a)\norm{b}{\A} + \norm{a}{\A} \Lip_D(b)$. It follows that $(\A,\Lip_D)$ is a {\qcms}.
\end{proof}

For our construction to be coherent and move toward our project of
applying the theory of the propinquity to metric spectral triples, it
is very important that the basic notion of two metric spectral triples
and two {\qcms s} being ``the same'', i.e. isomorphic, are
compatible. We propose the following strong notion of equivalence for
spectral triples.

\begin{definition}\label{equiv-spectral-triple-def}
  Two spectral triples $(\A,\Hilbert_\A,D_\A)$ and
  $(\B,\Hilbert_\B,D_\B)$ are \emph{equivalent} when there exists a
  unitary $U$ from $\Hilbert_\A$ to $\Hilbert_\B$ and a *-isomorphism
  $\theta : \A \rightarrow \B$, such that
  \begin{equation*}
    U\dom{D_\A} = \dom{D_\B}\text{ and }    D_\B = U D_\A U^\ast \text{over $\dom{D_\B}$,}
  \end{equation*}
  and
  \begin{equation*}
    \forall \omega \in \Hilbert_\B, a \in \A \quad \theta(a)\omega = (U a U^\ast)\omega \text{.}
  \end{equation*}
\end{definition}

We remark, using the notation of Definition
(\ref{equiv-spectral-triple-def}) that $U^\ast\dom{D_\B}=\dom{D_\A}$:
indeed,
\begin{equation*}
  U^\ast \dom{D_\B} = U^\ast \left(U\dom{D_\A}\right) = \dom{D_\A}
\end{equation*}
since $U$ is a bijection with $U^{-1}=U^\ast$. It then follows
immediately that $U^\ast D_\B U = D_\A$ as operators on $\dom{D_\A}$.

Equivalence, thus defined, is indeed an equivalence relation on the
class of spectral triples. Moreover, equivalence of spectral triples
preserves the typical constructions based on spectral triples in the
literature \cite{Connes}.

On the other hand, there is a natural notion of isomorphism for
{\qcms}, called full quantum isometries
\cite{Rieffel00,Latremoliere13,Latremoliere13b}. To motivate the following definition, note that if $j : (X,d_X) \rightarrow (Y,d_Y)$ is an isometry between two compact metric spaces $(X,d_X)$ and $(Y,d_Y)$, then $f\in C(Y) \mapsto f\circ j \in C(X)$ is a surjective *-morphism (since $j$ is continuous and injective; also note the reversing of the arrow) such that, by McShane's extension theorem \cite{McShane34}, for any Lipschitz function $g \in \sa{C(X)}$, there exists a Lipschitz function $h \in \sa{C(Y)}$, with the same Lipschitz constant as $g$, such that $g = h\circ j$ --- of course, if $g = k \circ j$ for $k \in \sa{\C(Y)}$ then, as $j$ is an isometry, the Lipschitz constant of $k$ (which is possibly infinite) is at least the Lipschitz constant of $g$. We are thus led to the following definition.

\begin{definition}\label{quantum-iso-def}
  Let $(\A,\Lip_\A)$ and $(\B,\Lip_\B)$ be two {\qcms s}. A
  \emph{quantum isometry}
  $\pi : (\A,\Lip_\A) \twoheadrightarrow (\B,\Lip_\B)$ is a
  *-epimorphism $\pi : \A\twoheadrightarrow\B$ such that for all
  $b \in \sa{\B}$:
  \begin{equation}\label{quantum-iso-eq}
    \Lip_\B(b) = \inf\left\{ \Lip_\A(a) : a\in\dom{\Lip_\A}, \pi(a) = b  \right\}\text{.}
  \end{equation}
  A \emph{full quantum isometry}
  $\pi : (\A,\Lip_\A) \rightarrow (\B,\Lip_\B)$ is a *-isomorphism
  $\pi : \A\rightarrow\B$ such that $\Lip_\B\circ\pi = \Lip_\A$.
\end{definition}

Rieffel proved in \cite{Rieffel00} that quantum isometries can be
chosen as morphisms of a category over the {\qcms s}, and full quantum
isometries are indeed the morphisms whose inverse is also a morphism
in this category. We also note that, using the notation of Definition
(\ref{quantum-iso-def}), with
$\pi:(\A,\Lip_\A)\rightarrow(\B,\Lip_\B)$ a quantum isometry, if
$b\in\dom{\Lip_\B}$, then there exists $a\in\dom{\Lip_\A}$ such that
$\pi(a)=b$, so $\dom{\Lip_\B}\subseteq\pi(\dom{\Lip_\A})$. Of course,
if $b\in\pi(\dom{\Lip_\A})$, so that there exists $a\in\dom{\Lip_\A}$
such that $\pi(a)=b$, then Definition (\ref{quantum-iso-def}) implies
that $\Lip_\B(b) \leq \Lip_\A(a) < \infty$ and thus,
$b\in\dom{\Lip_\B}$. So, for any quantum isometry,
$\pi(\dom{\Lip_\A})=\dom{\Lip_\B}$. We could replace Equation
(\ref{quantum-iso-eq}) with the conditions that
$\pi(\dom{\Lip_\A})\subseteq\dom{\Lip_\B}$ and
\begin{equation}\label{quantum-iso-eq-2}
  \forall b \in \dom{\Lip_\B} \quad \Lip_\B(b) = \inf\left\{\Lip_\A(a):a\in\dom{\Lip_\A},\pi(a)=b \right\}
\end{equation}
since in that case, if $b\notin\dom{\Lip_\B}$, then
$\Lip_\B(b) = \infty$ and $b\notin\pi(\dom{\Lip_\A})$, and thus
$\{\Lip_\A(a):a\in\dom{\Lip_\A},\pi(a)=b\}$ is empty (and by
convention, has infinite infimum), so Equation (\ref{quantum-iso-eq})
holds as stated. Last, replacing $a\in\dom{\Lip_\A}$ by $a\in\sa{\A}$
in Equation (\ref{quantum-iso-eq}) or Equation
(\ref{quantum-iso-eq-2}) does not change anything, since
$\Lip_\A(a) = \infty$ whenever $a\in\sa{\A}\setminus\dom{\Lip_\A}$. We
will use these observations whenever convenient.

Now, if $\pi : (\A,\Lip_\A)\rightarrow(\B,\Lip_\B)$ is a full quantum
isometry between two {\qcms s} $(\A,\Lip_\A)$ and $(\B,\Lip_\B)$, then
we first note that if $a\in\dom{\Lip_\A}$, then
$\Lip_\B(\pi_\A(a)) = \Lip_\A(a) < \infty$, so
$\pi_\A(a) \in \dom{\Lip_\B}$. Similarly, if $b \in\dom{\Lip_\B}$, and
if $a=\pi^{-1}(b)$, then
$\Lip_\A(a) = \Lip_\B(\pi(a)) = \Lip_\B(\pi(\pi^{-1}(b))) = \Lip_\B(b)
< \infty$ and thus $a\in\dom{\Lip_\A}$. So
$\pi(\dom{\Lip_\A}) = \dom{\Lip_\B}$. Moreover, it is also immediate
that $\Lip_\A\circ\pi^{-1} = \Lip_\B\circ\pi\circ\pi^{-1} = \Lip_\B$,
so $\pi^{-1}$ is also a full quantum isometry. Last, for all
$b \in \dom{\Lip_\B}$, we have
$\Lip_\B(b) = \Lip_\A\circ\pi^{-1}(b) = \inf \Lip_\A(\pi^{-1}(\{b\}))$
since $\pi^{-1}(\{b\})=\{\pi^{-1}(b)\}$, as $\pi$ is a
bijection. Thus, full quantum isometries are, indeed, quantum
isometries, and so are their inverse.

There is a more general notion of Lipschitz morphisms between {\qcms
  s} \cite{Latremoliere16b} which will be important for us later on:
given two {\qcms s} $(\A,\Lip_\A)$ and $(\B,\Lip_\B)$, a *-morphism
$\pi:\A\rightarrow\B$ is a \emph{Lipschitz morphism} from
$(\A,\Lip_\A)$ to $(\B,\Lip_\B)$ when we require that
$\pi(\dom{\Lip_\A})\subseteq\dom{\Lip_\B}$ \emph{without} requiring
Equation (\ref{quantum-iso-eq-2}).

\medskip

We now check that equivalent metric spectral triples naturally give
rise to fully quantum isometric quantum metric spaces.

\begin{proposition}\label{equivalence-prop}
  If $(\A,\Hilbert_\A,D_\A)$ and $(\B,\Hilbert_\B,D_\B)$ are two
  equivalent metric spectral triples, then $(\A,\Lip_{D_\A})$ and
  $(\B,\Lip_{D_\B})$ are fully quantum isometric.
\end{proposition}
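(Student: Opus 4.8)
The plan is to take the *-isomorphism $\theta : \A \rightarrow \B$ furnished by the equivalence as the sought full quantum isometry. Since a full quantum isometry is precisely a *-isomorphism $\pi$ satisfying $\Lip_{D_\B}\circ\pi = \Lip_{D_\A}$, and $\theta$ is already a *-isomorphism carrying $\sa{\A}$ onto $\sa{\B}$, the entire task reduces to verifying the identity of seminorms $\Lip_{D_\B}(\theta(a)) = \Lip_{D_\A}(a)$ for every $a \in \sa{\A}$. Proposition \ref{spectral-metric-prop} ensures both $(\A,\Lip_{D_\A})$ and $(\B,\Lip_{D_\B})$ are genuine {\Lqcms s}, so that the notion of full quantum isometry indeed applies to them.

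The heart of the argument is a conjugation identity for commutators. By definition of equivalence, $\theta(a)$ acts on $\Hilbert_\B$ as $U a U^\ast$ and $D_\B = U D_\A U^\ast$. First I would record that $\dom{D_\B} = U\,\dom{D_\A}$, which follows from $D_\B = U D_\A U^\ast$ together with the unitarity of $U$. Next, for $\omega = U\xi \in \dom{D_\B}$ with $\xi\in\dom{D_\A}$, one has $\theta(a)\omega = U a \xi$, so that $\theta(a)\,\dom{D_\B}\subseteq\dom{D_\B}$ holds if and only if $a\,\dom{D_\A}\subseteq\dom{D_\A}$. This guarantees that the two seminorms take the value $\infty$ exactly simultaneously, so that the convention in Notation \ref{LipD-notation} is respected on both sides. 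On the common invariant domain, a direct substitution gives, as operators on $\dom{D_\B}$,
\begin{equation*}
  [D_\B,\theta(a)] = U D_\A U^\ast\, U a U^\ast - U a U^\ast\, U D_\A U^\ast = U\,[D_\A,a]\,U^\ast\text{.}
\end{equation*}

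Finally, since $U$ is a unitary, conjugation by $U$ is an isometry for the operator norm, whence $\opnorm{[D_\B,\theta(a)]}{}{\Hilbert_\B} = \opnorm{[D_\A,a]}{}{\Hilbert_\A}$, i.e. $\Lip_{D_\B}(\theta(a)) = \Lip_{D_\A}(a)$. Combined with the domain equivalence above, this shows $\theta$ is a full quantum isometry and finishes the proof.

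I expect the main obstacle --- really the only point requiring genuine care --- to be the bookkeeping of the unbounded operators $D_\A$ and $D_\B$: one must justify the identification $\dom{D_\B}=U\,\dom{D_\A}$ and ensure that the commutator identity is an equality of operators on an honestly invariant dense domain before passing to norms, rather than a purely formal manipulation of symbols. Once this domain analysis is in place, the unitary invariance of the operator norm renders the equality of the L-seminorms immediate.
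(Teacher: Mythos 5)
Your proposal is correct and follows essentially the same route as the paper: conjugate the commutator to obtain $[D_\B,\theta(a)] = U[D_\A,a]U^\ast$ and invoke the unitary invariance of the operator norm to conclude $\Lip_{D_\B}\circ\theta = \Lip_{D_\A}$. Your explicit bookkeeping of the domains ($\dom{D_\B}=U\,\dom{D_\A}$ and the simultaneous finiteness of the two seminorms) is a welcome refinement that the paper's computation leaves implicit.
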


\begin{notation}
  If $T$ is an invertible operator on a Hilbert space $\Hilbert$, then
  $\AdRep{T}(A) = T A T^{-1}$ for all operators $A$ (bounded or not, up
  to adjusting the domain).
\end{notation}

\begin{proof}
  Let $U : \Hilbert_\A \rightarrow \Hilbert_\B$ be unitary and
  $\theta : (\A,\Lip_\A) \rightarrow (\B,\Lip_\B)$ be a *-isomorphism
  such that $\AdRep{U}D_\A = D_\B$ (including the fact that
  $U\dom{D_\A}=\dom{D_\B}$), and $U a U^\ast = \theta(a)$ for all
  $a\in\A$. If $a\in \dom{\Lip_{D_\A}}$ then
  $a\cdot\dom{D_\A}\subseteq\dom{D_\A}$, and $[D_\A,a]$ is bounded. Now, if
  $\xi \in \dom{D_\B}$, then $U^\ast\xi \in \dom{D_\A}$, and
  therefore, $U a U^\ast\xi \in \dom{D_\B}$. Moreover:
  \begin{align*}
    \Lip_{D_\A}(a) 
    &= \opnorm{[D_\A,a]}{}{\Hilbert_\A} \\
    &= \opnorm{[D_\A,a]}{}{\dom{D_\A}} \\
    &= \opnorm{[U^\ast D_\B U, a]}{}{\dom{D_A}} \\
    &= \opnorm{U^\ast D_\B U a - a U^\ast D_\B U}{}{\dom{D_\A}} \\
    &= \opnorm{U^\ast\left( D_\B U a U^\ast - U a U^\ast D_\A \right) U}{}{\dom{D_A}} \\
    &= \opnorm{D_\B \theta(a) - \theta(a) D_\B}{}{\dom{D_\B}} \\
    &= \opnorm{[D_\B,\theta(a)]}{}{\Hilbert_\B} \\
    &= \Lip_{D_\B}\circ\theta(a) \text{.}
  \end{align*}
  Thus $\theta(a) \in \dom{\Lip_{D_\B}}$ and
  $\Lip_{D_\B}\circ\theta(a) = \Lip_{D_\A}(a)$. In particular,
  $\theta(\dom{\Lip_{D_\A}}) \subseteq\dom{\Lip_{D_\B}}$.
  
  By symmetry, if $b\in\dom{\Lip_{D_\B}}$, then
  $\theta^{-1}(b) \in \Lip_{D_\A}$ with
  $\Lip_{D_\A}\circ\theta^{-1}(b) = \Lip_{D_\B}(b)$.

  If $a\notin\dom{\Lip_{D_\A}}$, yet
  $\theta(a) \in \dom{\Lip_{D_\B}}$, then we would have, by the
  observation above, that
  $a = \theta^{-1}(\theta(a)) \in \dom{\Lip_{D_\A}}$, an obvious
  contradiction. So
  $\theta(\sa{\A}\setminus\dom{\Lip_\A})\subseteq
  \sa{\B}\setminus\dom{\Lip_{D_\B}}$. Therefore,
  $\theta(\dom{\Lip_{D_\A}}) = \dom{\Lip_{D_\B}}$.
  
  Thus $\theta$ is a full quantum isometry from $(\A,\Lip_{D_\A})$ to
  $(\B,\Lip_{D_\B})$.
\end{proof}

It is nontrivial to determine whether or not two fully quantum isometric quantum metric spaces arising from metric spectral triples must come from metric spectral triples that are equivalent. This matter will be one of the points we address in this work.

\medskip

Our main contribution to noncommutative metric geometry is the
discovery and study of the Gromov-Hausdorff propinquity, a family of
metrics on the class of {\Qqcms{F}s}, for any permissible function
$F$, which are analogues of the Gromov-Hausdorff distance
\cite{Latremoliere13,Latremoliere13b,Latremoliere14,Latremoliere15,Latremoliere15b}. The
distance between spectral metric triples, introduced in this paper, is
constructed from the propinquity. We now summarize the construction of
the propinquity, starting with the notion of a tunnel between a pair
of {\qcms s}.

\begin{definition}\label{tunnel-def}
  Let $F$ be a permissible function, and let $(\A_1,\Lip_1)$ and
  $(\A_2,\Lip_2)$ be two {\Qqcms{F}s}. An \emph{$F$-tunnel}
  $\tau = (\D,\Lip,\pi_1,\pi_2)$ from $(\A_1,\Lip_1)$ to
  $(\A_2,\Lip_2)$ is a {\Qqcms{F}} $(\D,\Lip)$ and two quantum
  isometries $\pi_1 : (\D,\Lip)\twoheadrightarrow (\A_1,\Lip_1)$ and
  $\pi_2 : (\D,\Lip)\twoheadrightarrow(\A_2,\Lip_2)$. The
  \emph{domain} $\dom{\tau}$ of $\tau$ is $(\A_1,\Lip_1)$ while the
  \emph{codomain} $\codom{\tau}$ of $\tau$ is $(\A_2,\Lip_2)$.
\end{definition}
In particular, tunnels give rise to isometric embeddings of the state
spaces, though the isometries are of a very special kind, as dual maps
to *-epimorphisms, as illustrated in Figure (\ref{tunnel-fig}). Fixing
a permissible function $F$ and two {\Qqcms{F}s} $(\A,\Lip_\A)$ and
$(\B,\Lip_\B)$, the set of all $F$-tunnels from $(\A,\Lip_\A)$ to
$(\B,\Lip_\B)$ is denoted by:
\begin{equation*}
  \tunnelsetltd{(\A,\Lip_\A)}{(\B,\Lip_\B)}{F} \text{.}
\end{equation*}
We note that the set of $F$-tunnels between any two $F$-Leibniz {\qcms
  s} is never empty.

\begin{figure}[t]\label{tunnel-fig}
  \begin{equation*}
    \xymatrix{
      & (\StateSpace(\D),\Kantorovich{\Lip_\D})  & \\
      & (\D,\Lip_\D) \ar@{>>}[ldd]^{\pi_\A} \ar@{>>}[rdd]_{\pi_\B} \ar@{-->}[u] & \\
      (\StateSpace(\A),\Kantorovich{\Lip_\A}) \ar@/^/@{^{(}->}[ruu]_{\pi_\A^\ast} & & (\StateSpace(\B),\Kantorovich{\Lip_\B}) \ar@/_/@{_{(}->}[luu]^{\pi_\B^\ast} \\
      (\A,\Lip_\A) \ar@{-->}[u] & & (\B,\Lip_\B) \ar@{-->}[u]
    }
  \end{equation*}
  \caption{A tunnel and the dual isometric embeddings of state spaces}
  \begin{tabular}{lp{4cm}}
    $\hookrightarrow$ & isometry \\
    $\twoheadrightarrow$ & quantum isometry \\
    dotted arrows & duality relations \\
    $\pi^\ast : \varphi \mapsto \varphi\circ\pi$ & dual map\\
    $(\A,\Lip_\A)$, $(\B,\Lip_\B)$, $(\D,\Lip_\D)$ & {\Qqcms{F}s}
  \end{tabular}
\end{figure}
There is a natural quantity associated with any tunnels which, in
essence, measures how far apart the domain and codomain of a tunnel
are for this particular choice of embedding.
\begin{notation}
  If $(X,d)$ is a metric space, then the Hausdorff distance
  \cite{Hausdorff} on the class of all bounded, closed subsets of $(X,d)$ is
  denoted by $\Haus{d}$. If $X$ is a vector space and $d$ is induced
  by a norm $\norm{\cdot}{X}$, then $\Haus{d}$ is also denoted
  $\Haus{\norm{\cdot}{X}}$.
\end{notation}

\begin{definition}
  Let $(\A_1,\Lip_1)$ and $(\A_2,\Lip_2)$ be two {\qcms s}. The
  \emph{extent} $\tunnelextent{\tau}$ of a tunnel
  $\tau = (\D,\Lip,\pi_1,\pi_2)$ from $(\A_1,\Lip_1)$ to
  $(\A_2,\Lip_2)$ is the nonnegative number:
  \begin{equation*}
    \tunnelextent{\tau} = \max_{j \in \{1,2\}} \Haus{\Kantorovich{\Lip}}\left(\left\{ \varphi\circ\pi_j : \varphi \in \StateSpace(\A_j) \right\}, \StateSpace(\D)\right)\text{.}
  \end{equation*}
\end{definition}
We note that the extent of a tunnel is always finite. The propinquity
is then defined as follows:
\begin{definition}
  Let $F$ be a permissible function. For any two {\Qqcms{F}s}
  $(\A,\Lip_\A)$ and $(\B,\Lip_\B)$, the \emph{dual Gromov-Hausdorff
    $F$-propinquity} $\dpropinquity{F}((\A,\Lip_\A),(\B,\Lip_\B))$ is
  the nonnegative number:
  \begin{equation*}
    \dpropinquity{F}\left((\A,\Lip_\A),(\B,\Lip_\B)\right) = \inf\left\{ \tunnelextent{\tau} : \tau \in \tunnelsetltd{(\A,\Lip_\A)}{(\B,\Lip_\B)}{F} \right\}\text{.}
  \end{equation*}
\end{definition}

The propinquity enjoys the properties which a noncommutative analogue
of the Gromov-Hausdorff distance ought to possess, as seen in the
following theorem.
\begin{convention}
  Let $\sim$ be an equivalence relation on a class $C$. We call a
  pseudo-metric $d : C\times C \rightarrow [0,\infty)$ a \emph{metric,
    up to $\sim$}, when
  \begin{equation*}
    \forall x,y \in C \quad x\sim y \iff d(x,y) = 0 \text.
  \end{equation*}
\end{convention}

\begin{theorem}\label{prop-thm}
  Let $F$ be a continuous permissible function. The $F$-propinquity
  $\dpropinquity{F}$ is a complete metric up to full quantum isometry
  on the class of {\Qqcms{F}s}. Moreover, the class map which
  associates, to any compact metric space $(X,d)$, its canonical
  {\Lqcms} $(C(X),\Lip_d)$, where $C(X)$ is the C*-algebra of
  continuous, $\C$-valued functions over $X$, and
  \begin{equation*}
    \forall f \in C(X) \quad \Lip_d(f) = \sup\left\{\frac{|f(x)-f(y)|}{d(x,y)}:x,y\in X,x\neq y\right\} \in [0,\infty]\text,
  \end{equation*}
  is an homeomorphism onto its range, when its domain is endowed with
  the Gromov-Hausdorff distance topology and its codomain is endowed
  with the topology induced by the dual propinquity.
\end{theorem}

\begin{remark}
  The additional assumption that $F$ should be continuous in Theorem (\ref{prop-thm}) is only used in the proof of the completeness of the dual propinquity; without it, all other properties listed in Theorem (\ref{prop-thm}) still hold.
\end{remark}

Examples of interesting convergences for the propinquity include fuzzy
tori approximations of quantum tori \cite{Latremoliere13c}, continuity
for certain perturbations of quantum tori \cite{Latremoliere15c},
unital AF algebras with faithful tracial states
\cite{Latremoliere15d}, continuity for noncommutative solenoids
\cite{Latremoliere16}, and Rieffel's work on approximations of spheres
by full matrix algebras \cite{Rieffel15}, among other
examples. Moreover, we prove in \cite{Latremoliere15b} an analogue of
Gromov's compactness theorem. The canonical image of the class of
compact metric spaces is closed and actually nowhere dense for the
propinquity \cite{Latremoliere17b} (the space of classical compact metric spaces is known to be path connected for the Gromov-Hausdorff distance \cite{Ivanov16}, hence also for the propinquity).

We may put restrictions on the class of tunnels under consideration,
so we can adapt the construction of the propinquity to smaller classes
of {\qcms s} with additional properties. In many applications, tunnels
are built from a structure called a bridges \cite{Latremoliere13}.

We prove in this paper that we can construct a distance on the class
of metric spectral triples based upon our construction of the
propinquity.  Our metric, which we will call the spectral propinquity,
will be zero exactly between equivalent spectral triples, and it will
be stronger than the propinquity. To reach our goal, we make the
following observations. First, metric spectral triples give rise, in a
completely natural manner, to {\gMVB s}, whose definition we recall
below. This is an important proof-of-concept for our work on the
modular propinquity \cite{Latremoliere16c,Latremoliere18d}, which
extends the construction of a metric between {\qcms s} to a class of
modules over {\qcms s}.

Secondly, we want to encode more than the metric property for metric
spectral triples. Our project has given us the idea on how to proceed
from there. As is well-known, spectral triples give rise to natural
actions of $\R$ by unitaries on the underlying Hilbert space of the
spectral triple. The propinquity is well-behaved with respect to
group, or even monoid actions. In fact, we have defined a covariant
version of the propinquity. In this paper, we introduce the covariant
version of the modular propinquity in the same spirit as
\cite{Latremoliere17c,Latremoliere18b,Latremoliere18c}. This is a
contribution to our project on its own, so we develop it in its full
generality --- other applications of the construction found in this
paper could be, for instance to the study of the geometry of certain
spaces of actions on modules, such as the class of the actions of the
Heisenberg group actions on Heisenberg modules over quantum tori
\cite{Connes80,Rieffel93,Latremoliere16c,Latremoliere17a,Latremoliere18a}. Now,
applying the covariant modular propinquity to the {\gMVB s} defined by
metric spectral triples and their canonical unitary actions of $\R$ is
our spectral propinquity.

\section{D-norms from Metric Spectral Triples}

Proposition (\ref{spectral-metric-prop}) shows that metric spectral
triples give rise to {\qcms s}. We now see that in fact, these triples
give rise to more structure: they define {\gMVB s}, i.e. a particular
type of module structure over {\qcms s}. The importance of this
observation is that we have constructed a complete metric on {\gMVB s}
--- the metrical propinquity (up to a small change in convention which
we will explain below). Thus, we immediately have a pseudo-metric on
metric spectral triples. We recall from {\cite[Definition
  2.12]{Latremoliere18d}} the following notion, with a small change
explained in a following remark.

\begin{definition}
  Let $\A$ and $\B$ be two unital C*-algebras. An $\A$-$\B$
  C*-correspondence $\module{M}$ is a right Hilbert $\B$-module (whose
  $\B$-valued inner product is denoted by
  $\inner{\cdot}{\cdot}{\module{M}}$), together with a unital
  *-morphism from $\A$ to the C*-algebra of adjoinable $\B$-linear
  operators on $\module{M}$.

  We will not introduce any notation for the *-morphism from $\A$ to
  adjoinable $\B$-linear operators on $\module{B}$, and simply use the
  left module notation instead.
\end{definition}

Metric C*-correspondences are C*-correspondences over {\qcms s}, and endowed with a norm whose properties are inspired by the noncommutative theory of connections \cite{Mesland09,Kaad13,Kaad17}, as explained in \cite{Latremoliere16c}.

\begin{definition}\label{metrical-bundle-def}
  A \emph{metrical C*-correspondence}
  \begin{equation*}
    \left( \module{M}, \CDN, \A, \Lip_\A, \B, \Lip_\B \right)
  \end{equation*}
  is given by the following:
  \begin{enumerate}
  \item $(\A,\Lip_\A)$ and $(\B,\Lip_\B)$ are {\Qqcms{F}s},
  \item $\module{M}$ is a $\A$-$\B$ C*-correspondence,
  \item $\CDN$ is a norm defined on a dense $\A$-left submodule
    $\dom{\CDN}$ of $\module{M}$ such that:
    \begin{enumerate}
    \item for all $\omega\in\dom{\CDN}$ we have
      $\norm{\omega}{\module{M}} \leq \CDN(\omega)$,
    \item the set
      $\left\{ \omega \in \module{M} : \CDN(\omega)\leq 1 \right\}$ is
      compact for $\norm{\cdot}{\module{M}}$,
    \item for all $\omega,\eta \in \dom{\CDN}$, if
      $b = \inner{\omega}{\eta}{\module{M}}$, then
      \begin{equation*}
        \max\left\{ \Lip_\B\left(\frac{b+b^\ast}{2}\right), \Lip_\B\left(\frac{b-b^\ast}{2 i}\right) \right\} \leq F_{\mathsf{inner}}(\CDN(\omega),\CDN(\eta))\text{,}
      \end{equation*}
      where $F_{\mathsf{inner}} : [0,\infty)^2\rightarrow[0,\infty)$ is weakly
      increasing for the product order, and such that
      $F_{\mathsf{inner}}(x,y) \geq 2 x y$ for all $x,y \geq 0$,
    \item for all $\omega\in\dom{\CDN}$ and $a\in \dom{\Lip_\A}$, we have:
      \begin{equation*}
        \CDN(a \omega) \leq F_{\mathsf{mod}}(\norm{a}{\A},\Lip_\A(a), \CDN(\omega)) \text{,}
      \end{equation*}
      where $F_{\mathsf{mod}} : [0,\infty)^3\rightarrow[0,\infty)$ is weakly
      increasing for the product order and such that
      $F_{\mathsf{mod}}(x,y,z) \geq (x+y)z$.
    \end{enumerate}
  \end{enumerate}
  A triple of functions $(F,F_{\mathsf{inner}},F_{\mathsf{mod}})$ as above is called
  \emph{permissible}.

  A \emph{Leibniz} {\gMVB} is a {\MVB{F}} where, for all $x,y,z,t\geq 0$, we have $F(x,y,z,t) = x z + y t$,  $F_{\mathsf{mod}}(x,y,z) = (x+y)z$ and $F_{\mathsf{inner}}(x,y) = 2 x y$.
\end{definition}

\begin{remark}
  We note that we do not require any inequality on
  $\CDN(\omega\cdot b)$ for $b\in \B$ and $\omega\in\module{M}$ in
  Definition (\ref{metrical-bundle-def}), using the notation in that
  definition. Indeed, as explained in \cite{Latremoliere18d}, it is
  not needed to define our metric: Condition (3c) does suffice.
\end{remark}

\begin{remark}
  We made a change to \cite{Latremoliere18d} where we introduced the
  similar notion of a ``metrical quantum vector bundles.'' to our
  notion of metrical C*-correspondence. The change is that a metric
  C*-correspondence is indeed a C*-correspondence, and involves both a
  right and a left action. Moreover, we reversed the order of the two
  quantum compact metric spaces in our notation. We will comment when
  these changes would require some modifications to the proofs in
  \cite{Latremoliere18d}, which are, as we shall see, very simple and
  minor.

  We also will work with right modules, instead of left modules, when
  discussing {\gQVB s}, using the following definition.
\end{remark}

\begin{definition}
  A {\MVB{F}} of the form
  \begin{equation*}
    (\module{M},\CDN,\C,0,\A,\Lip)
  \end{equation*}
  simply denoted by $(\module{M},\CDN,\A,\Lip)$, is called a (right)
  \emph{\QVB{F}}, and $(F,F_{\mathsf{inner}})$ is called a \emph{permissible pair}.
\end{definition}

Quantum metrized vector bundles are modeled after Hermitian vector
bundles endowed with a choice of a metric connection, which is used to
define the D-norms \cite{Latremoliere16c} --- however, we do not require {\gQVB s} to be projective in general. The introduction of the
more general {\gMVB s} is actually motivated by spectral triples.

The following theorem, upon which our present work relies, brings
together our work on modules in noncommutative metric geometry and
noncommutative differential geometry.

\begin{convention}
  A Hilbert space $\Hilbert$ is canonically a $\C$-right Hilbert
  module, by setting $\xi\cdot z = z\xi$ for all $\xi \in \Hilbert$
  and $z\in\C$ (since $\C$ is Abelian). To minimize notations, we will
  typically continue to write our scalars on the left when working
  with Hilbert spaces (but not when working with right Hilbert
  modules), with the understanding, when needed, that we mean this
  canonical right action.
\end{convention}

\begin{theorem}\label{Dnorm-thm}
  Let $(\A,\Hilbert,D)$ be a metric spectral triple. If for all
  $a\in\A$ such that $a\,\dom{D}\subseteq D$ and $[D,a]$ is bounded on
  $\dom{D}$, we set:
  \begin{equation*}
    \Lip_D(a) = \opnorm{[D,\pi(a)]}{}{\Hilbert} \text{,}
  \end{equation*}
  and, for all $\xi \in \dom{D}$, we set:
  \begin{equation*}
    \CDN(\xi) = \norm{\xi}{\Hilbert} + \norm{D\xi}{\Hilbert} \text{,}
  \end{equation*}
  then $(\Hilbert,\CDN,\A,\Lip_D,\C,0)$ is a {\LMVB}, which we denote
  by $\mvb{\A}{\Hilbert}{D}$.
\end{theorem}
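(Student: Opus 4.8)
The plan is to verify directly the four clauses of Definition~\ref{metrical-bundle-def} for the tuple $(\Hilbert,\inner{\cdot}{\cdot}{\Hilbert},\CDN,\C,0,\A,\Lip_D)$, read as a {\gMVB} whose Hilbert-module base is $(\C,0)$ and whose left-acting {\qcms} is $(\A,\Lip_D)$. Several clauses are then essentially free. First, $(\A,\Lip_D)$ is a {\Lqcms} by Proposition~\ref{spectral-metric-prop}, and $(\C,0)$ is trivially one, its state space being a single point; $\Hilbert$, equipped with $\inner{\cdot}{\cdot}{\Hilbert}$, is a Hilbert $\C$-module, i.e. simply a Hilbert space, and it is a left $\A$-module through the representation $\pi$. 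Next, $\CDN$ is a genuine norm on $\dom{\CDN} := \dom{D}$, which is a dense $\C$-linear subspace of $\Hilbert$ since $D$ is self-adjoint with dense domain; clause~(4a), $\norm{\xi}{\Hilbert}\leq\CDN(\xi)$, is immediate. Finally, clause~(4c) holds vacuously: since $\B=\C$ carries the zero seminorm, $\Lip_\C\bigl(\Re\inner{\xi}{\xi}{\Hilbert}\bigr)$ and $\Lip_\C\bigl(\Im\inner{\xi}{\xi}{\Hilbert}\bigr)$ both vanish, hence are dominated by any permissible $H$.

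For clause~(4d) I would establish the module Leibniz estimate $\CDN(a\xi)\leq\bigl(\norm{a}{\A}+\Lip_D(a)\bigr)\CDN(\xi)$. If $a\notin\dom{\Lip_D}$ the right-hand side is infinite and there is nothing to prove, so fix $a$ with $\Lip_D(a)<\infty$ and $\xi\in\dom{D}$, whence $a\xi\in\dom{D}$ and $Da\xi = aD\xi + [D,a]\xi$. Then
\begin{align*}
  \CDN(a\xi) &= \norm{a\xi}{\Hilbert} + \norm{Da\xi}{\Hilbert} \\
  &\leq \norm{a}{\A}\norm{\xi}{\Hilbert} + \norm{a}{\A}\norm{D\xi}{\Hilbert} + \opnorm{[D,a]}{}{\Hilbert}\norm{\xi}{\Hilbert} \\
  &\leq \bigl(\norm{a}{\A} + \Lip_D(a)\bigr)\CDN(\xi)\text{,}
\end{align*}
using $\norm{\xi}{\Hilbert}\leq\CDN(\xi)$ in the last step. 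This is precisely the bound with $G(x,y,z)=(x+y)z$.

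The heart of the proof, and the only place where the defining axioms of a spectral triple are genuinely used, is clause~(4b): the set $K = \{\xi\in\dom{D} : \CDN(\xi)\leq 1\}$ must be $\norm{\cdot}{\Hilbert}$-compact. Here I would invoke the compact resolvent. Writing $\Lambda = (1+D^2)^{1/2}$, which is self-adjoint, positive and invertible with $\dom{\Lambda}=\dom{D}$ and $\norm{\Lambda\xi}{\Hilbert}^2 = \norm{\xi}{\Hilbert}^2 + \norm{D\xi}{\Hilbert}^2$, one has $\norm{\Lambda\xi}{\Hilbert}\leq\CDN(\xi)$, so every $\xi\in K$ lies in the image of the closed unit ball of $\Hilbert$ under $\Lambda^{-1}=(1+D^2)^{-1/2}$. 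Since $(1+D^2)$ has compact inverse (equivalently $D+i$ does, as noted after Definition~\ref{spectral-triple-def}), the positive operator $(1+D^2)^{-1/2}$ is compact, so that image is relatively compact; hence $K$ is relatively compact in $\Hilbert$. To upgrade this to compactness I would show $K$ is closed: if $\xi_n\in K$ converges in $\Hilbert$ to $\xi$, then $(D\xi_n)$ is bounded, and passing to a weakly convergent subsequence together with the closedness of the self-adjoint operator $D$ gives $\xi\in\dom{D}$ with $D\xi_n\rightharpoonup D\xi$; weak lower semicontinuity of the norm then yields $\CDN(\xi)\leq\liminf_n\CDN(\xi_n)\leq 1$, so $\xi\in K$. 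I expect this compactness argument to be the main obstacle, since it is exactly where the index-theoretic content of a spectral triple --- the compactness of the resolvent --- must be converted into a statement about the D-norm; the remaining verifications are formal.

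With all four clauses in hand, it remains to read off the quasi-Leibniz data. We have produced the module bound with $G(x,y,z)=(x+y)z$; the base seminorms on $\A$ and $\C$ are Leibniz, i.e. $F(x,y,z,t)=xz+yt$, by Proposition~\ref{spectral-metric-prop} and triviality; and clause~(4c) holds against $H(x,y)=2x^2y^2$ because its left-hand side vanishes. Therefore $(\Hilbert,\inner{\cdot}{\cdot}{\Hilbert},\CDN,\C,0,\A,\Lip_D)$ is a {\LMVB}, namely $\mvb{\A}{\Hilbert}{D}$.
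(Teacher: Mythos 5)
Your proposal is correct and follows essentially the same route as the paper: verify the module Leibniz bound $\CDN(a\xi)\leq(\norm{a}{\A}+\Lip_D(a))\CDN(\xi)$, deduce total boundedness of the $\CDN$-unit ball from the compact resolvent, and close by showing the unit ball is norm-closed. The only cosmetic differences are that the paper obtains the Leibniz bound by expanding $\inner{Da\xi}{Da\xi}{\Hilbert}$ rather than by the triangle inequality, factors the unit ball through $(D+i)^{-1}$ rather than $(1+D^2)^{-1/2}$, and proves closedness via the adjoint characterization of $\dom{D}$ rather than weak compactness and weak closedness of the graph of $D$ --- all equivalent in substance.
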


\begin{proof}
  For any $a\in\dom{\Lip_D}$ and $\xi \in \dom{D}$, we compute:
  \begin{align*}
    \inner{D a \xi}{D a \xi}{\Hilbert} 
    &= \inner{D a \xi - a D \xi}{D a \xi}{\Hilbert} + \inner{a D \xi}{D a \xi}{\Hilbert} \\
    &= \inner{[D,a]\xi}{D a \xi}{\Hilbert} + \inner{a D \xi}{D a \xi}{\Hilbert} \\
    &= \inner{[D,a]\xi}{[D,a]\xi}{\Hilbert} + \inner{[D,a]\xi}{a D \xi}{\Hilbert} \\
    &\quad + \inner{a D \xi}{D a \xi}{\Hilbert} \\
    &= \inner{[D,a]\xi}{[D,a]\xi}{\Hilbert} + \inner{[D,a]\xi}{a D \xi}{\Hilbert} \\
    &\quad + \inner{a D \xi}{ [ D, a ] \xi}{\Hilbert} + \inner{a D \xi}{a D \xi}{\Hilbert} \\
    &= \inner{[D,a]\xi}{[D,a]\xi}{\Hilbert} + 2\Re \inner{[D,a]\xi}{a D \xi}{\Hilbert} \\
    &\quad + \inner{a D\xi}{a D\xi}{\Hilbert} \\
    &\leq \norm{[D,a]\xi}{\Hilbert}^2 + 2\norm{[D,a]\xi}{\Hilbert}\norm{a}{\A}\norm{D\xi}{\Hilbert} + \norm{a}{\A}^2\norm{D\xi}{\Hilbert}^2\\
    &= \left( \norm{[D,a]\xi}{\Hilbert} + \norm{a}{\A}\norm{D\xi}{\Hilbert}  \right)^2\\
    &\leq \left( \Lip_D(a) \norm{\xi}{\Hilbert} + \norm{a}{\A}\CDN(\xi) \right)^2 \text{.}
  \end{align*}
  Hence,
  $\norm{D a \xi}{\Hilbert} \leq \Lip_D(a)\norm{\xi}{\Hilbert} +
  \norm{a}{\A}\norm{D\xi}{\Hilbert}$. Now, since
  $\norm{a\xi}{\Hilbert}\leq\norm{a}{\A}\norm{\xi}{\Hilbert}$, we
  conclude that
  $\CDN(a\xi)\leq \Lip_D(a)\norm{\xi}{\Hilbert} +
  \norm{a}{\A}\CDN(\xi) \leq (\Lip_D(a) + \norm{a}{\A})\CDN(\xi)$.

  Now, $\Hilbert$ is a Hilbert $\C$-module, and $(\C,0)$ is a {\Lqcms}
  (the only possible one with C*-algebra $\C = C(\{0\})$) . Therefore,
  $\left(\Hilbert,\CDN,\A,\Lip,\C,0\right)$
  has all the properties of a {\LMVB}, as long as we prove the
  compactness of the unit ball of $\CDN$.
  
  Let $\xi \in \dom{D}$ with $\CDN(\xi)\leq 1$. By construction,
  $\norm{(D+i)\xi}{\Hilbert} \leq \norm{D\xi}{\Hilbert} +
  \norm{\xi}{\Hilbert} \leq 1$. By definition, $D+i$ has a compact
  inverse, which we denote by $K$. We then have:
  \begin{align*}
    \left\{ \xi \in \Hilbert : \CDN(\xi)\leq 1 \right\} 
    &= K \left\{ (D+i)\xi : \xi \in \Hilbert, \CDN(\xi)\leq 1 \right\} \\
    &\subseteq K \left\{ \xi \in \Hilbert : \norm{\xi}{\Hilbert} \leq 1 \right\}
  \end{align*}
  and, as $K$ is compact, the set
  $K \left\{ \xi \in \Hilbert : \norm{\xi}{\Hilbert} \leq 1 \right\}$,
  and therefore, the unit ball of $\CDN$, are totally bounded in
  $\Hilbert$.

  It remains to show that $\CDN$ is lower semicontinuous. We thus now
  prove that the unit ball of $\CDN$ is closed in
  $\norm{\cdot}{\Hilbert}$.

  Let $(\xi_n)_{n\in\N}$ be a sequence in $\dom{D}$ converging to
  $\xi$ in $\Hilbert$ and with $\CDN(\xi_n) \leq 1$ for all
  $n\in\N$. Let $\eta\in\dom{D}$. We compute:
  \begin{align*}
    \left|\inner{\xi}{D\eta}{\Hilbert}\right|
    &= \lim_{n\rightarrow\infty} \left|\inner{\xi_n}{D\eta}{\Hilbert}\right| \\
    &= \lim_{n\rightarrow\infty} \left|\inner{D\xi_n}{\eta}{\Hilbert}\right| \\
    &\leq \limsup_{n\rightarrow\infty} \norm{D\xi_n}{\Hilbert} \norm{\eta}{\Hilbert} \\
    &\leq \limsup_{n\rightarrow\infty}\left(1 - \norm{\xi_n}{\Hilbert}\right) \norm{\eta}{\Hilbert}\\
    &= \left(1-\norm{\xi}{\Hilbert}\right)\norm{\eta}{\Hilbert} \text{.}
  \end{align*}
  Therefore, the map
  $\eta\in\dom{D} \mapsto \inner{\xi}{D\eta}{\Hilbert}$ is
  continuous. Hence $\xi \in \dom{D^\ast} = \dom{D}$, and thus for all
  $\eta\in\dom{D}$:
  \begin{equation*}
    \left|\inner{D\xi}{\eta}{\Hilbert}\right| = \left|\inner{\xi}{D\eta}{\Hilbert}\right|  \leq \left(1-\norm{\xi}{\Hilbert}\right)\norm{\eta}{\Hilbert} \text{.}
  \end{equation*}

  Thus $\eta\in\dom{D}\mapsto \inner{D\xi}{\eta}{\Hilbert}$ is
  uniformly continuous (as a
  $\left(1-\norm{\xi}{\Hilbert}\right)$-Lipschitz function) linear map
  on the dense subset $\dom{D}$, and thus extends uniquely to
  $\Hilbert$, where it has norm $1-\norm{\xi}{\Hilbert}$. Therefore
  $\norm{D\xi}{\Hilbert} \leq 1-\norm{\xi}{\Hilbert}$ and thus
  $\CDN(\xi) \leq 1$ as desired.

  Thus $\CDN$ is indeed a D-norm.

  Hence, if $(\A,\Lip)$ is a {\qcms}, we conclude that:
  \begin{equation*}
    \mvb{\A}{\Hilbert}{D} = \left( \Hilbert, \CDN, \A, \Lip, \C, 0 \right)
  \end{equation*}
  is a {\LMVB}.
\end{proof}

\begin{remark}
  If $(F,F_{\mathsf{inner}},F_{\mathsf{mod}})$ is any permissible triple, then by definition, $\mvb{A}{\Hilbert}{D}$ is a {\MVB{F}} for any metric spectral triple $(\A,\Hilbert,D)$.
\end{remark}

As we know how to construct {\LMVB s} from metric spectral triples, it
is only natural to apply the metrical propinquity to them, as
introduced in \cite{Latremoliere18d} (with the minor adjustments
below). We now review the construction of the modular and metrical
propinquity, and we refer to \cite{Latremoliere18d} for details; we
will only indicate where we make minor changes to deal with the
changes from left to right modules. We do recall from
\cite{Latremoliere16c,Latremoliere18d} the notions of module morphisms
and modular quantum isometry which we will now use.

\begin{remark}
  The term \emph{modular}, in this paper, is always used as the adjective for module, and \emph{not} in the sense of Tomita-Takesaki theory.
\end{remark}

\begin{definition}[\cite{Latremoliere16c,Latremoliere18d}]
  If $\module{M}$ is a right $\A$-module, and if $\module{N}$ is a
  right $\B$-module for two unital C*-algebras $\A$ and $\B$, then a
  \emph{module morphism} $(\Pi,\pi)$ from $\module{M}$ to $\module{N}$
  is a *-morphism $\pi : \A\rightarrow \B$ and a $\C$-linear
  map $\Pi : \module{M} \rightarrow \module{N}$ such that
  for all $a\in\A$ and $\omega\in\module{M}$, we have
  $\Pi(\omega\cdot a) = \Pi(\omega)\pi(a)$.

  The definition of a left module morphism is similar.

  If moreover $\module{M}$ and $\module{N}$ are right Hilbert modules
  over, respectively, $\A$ and $\B$, then $(\Pi,\pi)$ is a Hilbert
  module morphism when it is a right module morphism such that
  $\inner{\Pi(\omega)}{\Pi(\eta)}{\module{N}} =
  \pi(\inner{\omega}{\eta}{\module{M}})$ for all
  $\omega,\eta\in\module{M}$.

  Last, if $\module{M}$ is an $\A_1$-$\B_1$ C*-correspondence and
  $\module{N}$ is a $\A_2$-$\B_2$ C*-correspondence, then a
  C*-correspondence morphism $(\Pi,\pi,\theta)$ from $\module{M}$ to
  $\module{N}$ is given by a right Hilbert module morphism $(\Pi,\theta)$ from
  $\module{M}$ to $\module{N}$, seen respectively as $\B_1$ and $\B_2$
  right Hilbert modules, and a left module morphism $(\Pi,\pi)$ from
  $\module{M}$ and $\module{N}$, seen respectively as $\A_1$ and
  $\A_2$ left modules.
\end{definition}

\begin{definition}[\cite{Latremoliere16c,Latremoliere18d}]
  If $\mathds{A} = (\module{M},\CDN_\A,\A,\Lip_\A)$ and
  $\mathds{B} =(\module{N},\CDN_\B,\B,\Lip_\B)$ are two {\gQVB s},
  then a \emph{modular quantum isometry}
  $(\Pi,\pi) : \mathds{A} \rightarrow\mathds{B}$ is a right
  Hilbert module morphism from $\module{M}$ to $\module{N}$ such that $\pi : (\A,\Lip_\A) \rightarrow (\B,\Lip_\B)$ is a quantum isometry, $\Pi$ is surjective, and for all $\omega\in\module{N}$:
  \begin{equation*}
    \CDN_\B(\omega) = \inf\left\{ \CDN_\A(\eta) : \eta\in\dom{\CDN_\A}, \Pi(\eta)=\omega \right\}
  \end{equation*}
  (with $\inf\emptyset=\infty$).
 
  A modular quantum isometry $(\Pi,\pi)$ is a \emph{full module
    quantum isometry} when both $\pi$ and $\Pi$ are bijections, $\pi$
  is a full quantum isometry, and $\CDN_\B\circ\Pi = \CDN_\A$.
\end{definition}

\begin{remark}
  If $(\Pi,\pi)$ is a modular quantum isometry from $(\module{M},\CDN_\A,\A,\Lip_\A)$ onto $(\module{N},\CDN_\B,\B,\Lip_\B)$, then by definition, $\CDN_\B$ is the quotient norm of $\CDN_\A$ via the linear map $\Pi : \module{M} \rightarrow \module{N}$.
\end{remark}

As with quantum isometries, we note that if $(\Pi,\pi)$ is a module
quantum isometry from $(\module{M},\CDN_\A,\A,\Lip_\A)$ to
$(\module{N},\CDN_\B,\B,\Lip_\B)$, then
$\Pi(\dom{\CDN_\A})\subseteq\dom{\CDN_\B}$ --- if
$\omega\in\dom{\CDN_\A}$ then
$\CDN_\B(\Pi(\omega)) \leq \CDN_\A(\omega) < \infty$ by definition, so
$\Pi(\omega)\in\dom{\CDN_\B}$. Moreover, if $(\Pi,\pi)$ is a full
modular quantum isometry, then $\Pi(\dom{\CDN_\A})=\dom{\CDN_\B}$ by
symmetry.

From our perspective, two {\gQVB s} are isomorphic when there exists a
full metrical quantum isometry between them. Putting all these
ingredients together, we get the following notion for quantum
isometries and isomorphism of {\gMVB s}:

\begin{definition}[\cite{Latremoliere18d}]\label{mod-quantum-isometry-def}
  If
  \begin{equation*}
    \mathds{A}_1 = (\module{M}_1,\CDN_1,\A_1,\Lip_1,\B_1,\Lip'_1)\text{ and }\mathds{A}_2 = (\module{M}_2,\CDN_2,\A_2,\Lip_2,\B_2,\Lip'_2)\text{,}
  \end{equation*}
  are {\gMVB s}, then $(\Pi,\pi,\theta) : \mathds{A}_1 \rightarrow\mathds{A}_2$ is a
  \emph{metrical quantum isometry} when:
  \begin{enumerate}
  \item $(\Pi,\pi,\theta)$ is a C*-correspondence morphism,
  \item $(\Pi,\theta)$ is a modular quantum isometry from
    $(\module{M}_1,\CDN_1,\B_1,\Lip'_1)$ to
    $(\module{M}_2,\CDN_2,\B_2,\Lip'_2)$,
  \item $\pi : (\A_1,\Lip_1)\rightarrow(\A_2,\Lip_2)$ is a quantum
    isometry.
  \end{enumerate}

  Moreover, $(\Pi,\pi,\theta)$ is a \emph{full metrical quantum
    isometry} when $(\Pi,\pi)$ is a full modular quantum isometry, and
  $\theta$ is a full quantum isometry.
\end{definition}

\begin{remark}\label{reached-T-norm-rmk}
  We use the notation of Definition (\ref{mod-quantum-isometry-def}). Let $\omega\in\dom{\CDN_2}$. For all $n\in\N$, by definition of a modular quantum isometry, there exists $\eta_n\in\module{M}_1$ such that $\CDN_1(\eta_n)\leq \CDN_2(\omega)\left(1 + \frac{1}{n+1}\right)$. Set $\eta'_n = \frac{1}{1+\frac{1}{n+1}} \eta_n$ for all $n\in\N$: note that $\CDN_1(\eta'_n) \leq \CDN_2(\omega)$. Now, $\{ \xi \in \dom{\CDN_1} : \CDN_1(\xi)\leq \CDN_2(\omega) \}$ is compact, so there exists a convergent subsequence $(\eta'_{n_k})_{k\in\N}$ of $(\eta'_n)_{n\in\N}$ with limit $\eta\in\module{M}_1$ such that $\CDN_1(\eta)\leq \CDN_2(\omega)$. By construction, $\Pi(\eta) = \lim_{k\rightarrow\infty} \frac{1}{1+\frac{1}{n_k+1}}\Pi(\eta_{n_k}) = \omega$. So, again by definition of quantum isometries, we must have $\CDN_2(\omega)\leq \CDN_1(\eta)$ and thus, $\CDN_2(\omega) = \CDN_1(\eta)$.

  So, in short, given a modular quantum isometry $(\Pi,\pi,\theta)$, for all $\omega\in\dom{\CDN_2}$, there exists $\eta\in\dom{\CDN_1}$ such that $\Pi(\eta)=\omega$ and $\CDN_1(\eta)=\CDN_2(\omega)$.
\end{remark}

We now discuss the definition and basic properties of the metrical
propinquity, which defines a topology on the class of {\gMVB s}. We
begin by working with {\gQVB s}. As with the propinquity, we introduce
a notion of tunnel between {\gMVB s}.

\begin{definition}[{\cite{Latremoliere18d}}]\label{modular-tunnel-def}
  Let $(F,F_{\mathsf{inner}})$ be an permissible pair. Let $\mathds{A}_1$,
  $\mathds{A}_2$ be two {\QVB{F}s}. A \emph{modular tunnel}
  $\tau = (\mathds{P},\Theta_1,\Theta_2)$ from $\mathds{A}_1$ to
  $\mathds{A}_2$ is given by a {\QVB{F}} $\mathds{P}$, and, for
  each $j\in\{1,2\}$, a modular quantum isometry
  $\Theta_j : \mathds{P} \rightarrow \mathds{A}_j$.
\end{definition}

The extent of a modular tunnel is actually defined in the same manner
as for tunnels between {\qcms s}.

\begin{definition}[{\cite{Latremoliere18d}}]
  Let $(F,F_{\mathsf{inner}})$ be an permissible pair.  Let
  $\mathds{A}_j = (\module{M}_j,\CDN_j,\A_j,\Lip_j)$, for
  $j\in\{1,2\}$, be two {\QVB{F}s}. The \emph{extent} of a modular
  tunnel
  \begin{equation*}
    \tau = (\mathds{P},(\Theta_1,\theta_1),(\Theta_2,\theta_2))\text{,}
  \end{equation*}
  where $\mathds{P} = (\module{P},\CDN,\D,\Lip_\D)$, is the extent of
  the tunnel $(\D,\Lip_\D,\theta_1,\theta_2)$ from $(\A_1,\Lip_1)$ to
  $(\A_2,\Lip_2)$.
\end{definition}

The modular propinquity is then defined as the usual propinquity,
albeit using modular tunnels:

\begin{definition}[{\cite{Latremoliere18d}}]
  We fix a permissible pair $(F,F_{\mathsf{inner}})$. The \emph{modular
    $(F,F_{\mathsf{inner}})$-propinquity} is defined between any two {\QVB{F}s}
  $\mathds{M}_1$ and $\mathds{M}_2$ as:
  \begin{equation*}
    \dmodpropinquity{F,F_{\mathsf{inner}}}(\mathds{M}_1,\mathds{M}_2) = \inf\left\{ \tunnelextent{\tau} : \text{$\tau$ is a $(F,F_{\mathsf{inner}})$-modular tunnel from $\mathds{M}_1$ to $\mathds{M}_2$} \right\} \text{.}
  \end{equation*}
\end{definition}

We were able to establish that:

\begin{theorem}[{\cite{Latremoliere18d}}]\label{modular-prop-thm}
  Let $(F,F_{\mathsf{inner}})$ be an permissible pair of continuous functions.  The
  modular propinquity is a complete metric on the class of
  {\QVB{F}s} up to full modular quantum isometry.
\end{theorem}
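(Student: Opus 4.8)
The plan is to verify that $\dmodpropinquity{F,H}$ satisfies the axioms of a metric up to full modular quantum isometry, and then completeness, following the template already established for the dual propinquity $\dpropinquity{F}$ on {\Qqcms{F}s} but carrying the extra module data along at every step. The decisive structural feature is that the extent of a modular tunnel is, by definition, the extent of its underlying base tunnel; hence the forgetful assignment sending a modular tunnel to its base tunnel shows at once that $\dmodpropinquity{F,H}$ dominates $\dpropinquity{F}$ on the base {\Qqcms{F}s}, which lets me import the metric properties of the base propinquity whenever only base data is involved. I would first dispatch the easy axioms: symmetry is immediate, since a modular tunnel $(\mathds{P},\Theta_1,\Theta_2)$ from $\mathds{A}_1$ to $\mathds{A}_2$ is literally a modular tunnel $(\mathds{P},\Theta_2,\Theta_1)$ from $\mathds{A}_2$ to $\mathds{A}_1$, and the extent, being a maximum over the two legs, is unchanged. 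Finiteness requires exhibiting at least one modular tunnel of finite extent between any two {\QVB{F}{H}s}, which I would build from an orthogonal-sum construction on the two Hilbert modules together with the sum of the two D-norms, checking that the permissibility inequalities for $(F,H)$ survive so that the base tunnel has finite extent.

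The triangle inequality is where the module structure first demands real work. I would introduce composition of modular tunnels: given a modular tunnel from $\mathds{A}_1$ to $\mathds{A}_2$ and one from $\mathds{A}_2$ to $\mathds{A}_3$, I form a fibered modular tunnel over the shared codomain $\mathds{A}_2$, obtained by pulling back the two Hilbert-module structures along the modular quantum isometries onto $\mathds{A}_2$ and gluing. The extent of the composite is controlled by the triangle inequality for the Hausdorff distance $\Haus{\Kantorovich{\cdot}}$ on the relevant state spaces, exactly as for the base propinquity, because the extent only sees the base tunnel; the module-theoretic content is confined to verifying that the glued object is again a genuine {\QVB{F}{H}} — in particular that its D-norm has compact unit ball and satisfies the inner-product and action inequalities for $(F,H)$. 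Taking infima over tunnels then yields $\dmodpropinquity{F,H}(\mathds{A}_1,\mathds{A}_3) \leq \dmodpropinquity{F,H}(\mathds{A}_1,\mathds{A}_2) + \dmodpropinquity{F,H}(\mathds{A}_2,\mathds{A}_3)$.

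The heart of the argument, and the step I expect to be the main obstacle, is coincidence: that $\dmodpropinquity{F,H}(\mathds{A}_1,\mathds{A}_2)=0$ forces a full modular quantum isometry. From a sequence of modular tunnels with extents tending to $0$, the base propinquity is also $0$, so the metric property of $\dpropinquity{F}$ already supplies a full quantum isometry $\theta$ between the base {\Qqcms{F}s}; the new difficulty is to produce, simultaneously and compatibly, a Hilbert-module isomorphism $U$ intertwining $\theta$, preserving the inner product, the module action, and the D-norms. Here I would lean on the compactness of the D-norm unit balls (condition (4)(b) of Definition \ref{metrical-bundle-def}): using the target-set machinery of tunnels to lift elements across each tunnel with vanishing distortion, I extract, by an Arzel\`a--Ascoli and diagonal argument over a countable dense subset of the $\CDN$-unit ball, a limit map $U$, and then verify that it is isometric for $\norm{\cdot}{\module{M}}$, intertwines the actions via $\theta$, and satisfies $\CDN_2\circ U = \CDN_1$ using lower semicontinuity of the D-norms. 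The delicate point is coordinating the algebra-side and module-side limits so that $(\theta,U)$ is a single Hilbert module morphism rather than two unrelated limits.

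Finally, completeness is obtained by the inductive construction used for $\dpropinquity{F}$, upgraded to modules. Given a $\dmodpropinquity{F,H}$-Cauchy sequence I pass to a subsequence whose consecutive modular-tunnel extents are summable, assemble the chain of tunnels into a single ambient Hilbert module carrying a candidate limit D-norm defined as a suitable lower-semicontinuous envelope, and identify the limit {\QVB{F}{H}}. The crucial verification is again compactness of the candidate D-norm's unit ball, which I would obtain from the uniform control of the D-norms along the tunnels together with the summability of the extents; convergence of the original sequence to this limit then follows by constructing explicit modular tunnels to the limit with controlled extent.
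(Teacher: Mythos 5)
The first thing to say is that this paper does not actually prove the statement: it is recalled verbatim from \cite{Latremoliere18d}, with the citation standing in for the proof, so there is no in-paper argument to compare yours against step by step. Judged on its own terms, your outline follows the template that the propinquity literature (and the analogous arguments that \emph{are} carried out in this paper) uses: symmetry and finiteness are indeed routine, coincidence rests on the target-set machinery plus compactness of the $\CDN$-unit balls and lower semicontinuity, and completeness is the summable-subsequence construction. So the overall architecture is right.

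One concrete point where your description, taken literally, would fail is the composition of modular tunnels. The exact fibered product over the middle object $\mathds{A}_2$, with the pulled-back structures ``glued,'' does not in general yield quantum isometries onto the two ends: quotient seminorms are not preserved under exact pullbacks, which is precisely the known obstruction in this theory. The working construction --- visible in this paper's own proof of Theorem (\ref{triangle-thm}) for the covariant case --- is the $\varepsilon$-relaxed direct sum $\module{P}^1\oplus\module{P}^2$ equipped with the D-norm
\begin{equation*}
  \CDN(\omega_1,\omega_2) = \max\left\{ \CDN_1(\omega_1),\ \CDN_2(\omega_2),\ \tfrac{1}{\varepsilon}\norm{\Pi(\omega_1)-\Xi(\omega_2)}{\module{M}_2} \right\}\text{,}
\end{equation*}
which only gives a composite tunnel of extent at most $\tunnelextent{\tau_1}+\tunnelextent{\tau_2}+\varepsilon$; the triangle inequality then follows by letting $\varepsilon\to 0$ after taking infima. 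You do take infima at the end, so the conclusion survives, but the intermediate object should be this relaxed coupling rather than a genuine pullback. Beyond that, your coincidence and completeness paragraphs are correct in outline but defer all of the substantive work (coordinating the algebra-side and module-side limits into a single Hilbert module morphism, and verifying compactness of the limit D-norm's unit ball), which is exactly where \cite{Latremoliere18d} spends its effort; as a proof proposal this is acceptable, but none of those verifications is automatic.
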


We now make a few necessary comments about the proof of Theorem
(\ref{modular-prop-thm}). In \cite{Latremoliere18d}, our {\gQVB s} are
defined as left Hilbert modules, while now, our {\gQVB s} are right Hilbert 
modules. This only requires very trivial changes to
\cite{Latremoliere18d}. We simply have to write our scalars on the
right in \cite[Theorem 3.11]{Latremoliere18d} when defining the module
$\mathscr{B}$ (using the notation in that paper). Another, very minor,
change in the proof of \cite[Theorem 3.22]{Latremoliere18d} is that we
simply write the action on the right in \cite[Eq. (3.1) of Proof of
Theorem 3.22]{Latremoliere18d}. Similar trivial changes apply in the
proof of \cite[Theorem 5.3]{Latremoliere18d}. Nothing else needs
change.

The metrical propinquity then adds the data needed to work with
metrical C*-correspondences.

\begin{definition}\label{metrical-tunnel-def}
  Let $(F,F_{\mathsf{inner}},F_{\mathsf{mod}})$ be a permissible triple. Let $\mathds{A}_1$ and
  $\mathds{A}_2$ be two $(F,F_{\mathsf{inner}},F_{\mathsf{mod}})$-metrical C*-correspondences for
  $j\in\{1,2\}$.

  A \emph{metrical tunnel} $\tau = (\mathds{P},\Theta_1,\Theta_2)$ is
  given by an $(F,F_{\mathsf{inner}},F_{\mathsf{mod}})$ metrical C*-corres\-pondence $\mathds{P}$ and,
  for each $j\in\{1,2\}$, a metrical quantum isometry
  $\Theta_j : \mathds{P} \twoheadrightarrow \mathds{A}_j$.
\end{definition}

\begin{notation}\label{metrical-tunnel-notation}
  There is an equivalent description of metrical tunnels which,
  sometimes, proves helpful, and also motivates our definition for the
  extent of a metrical tunnel. Let
  $\tau = (\mathds{P},(\Pi_1,\pi_1,\theta_1),(\Pi_2,\pi_2,\theta_2))$
  be a metrical tunnel from $\mathds{M}_1$ to $\mathds{M}_2$, where,
  for all $j\in\{1,2\}$, the metrical C*-correspondence $\mathds{M}_j$ is given as $\mathds{M}_j =
  (\module{M}_j,\CDN_j,\A_j,\Lip_j,\B_j,\TLip_j)$. Moreover, we write
  the metrical C*-correspondence $\mathds{P}$ as
  $(\module{P},\TDN,\D,\Lip_\D,\alg{E},\Lip_{\alg{E}})$.

  Let us now set
  $\rho =
  \big((\module{P},\TDN,\alg{E},\Lip_{\alg{E}}),(\Pi_1,\theta_1),(\Pi_2,\theta_2)\big)$
  and $\mu = \big(\D,\Lip_{\D},\pi_1,\pi_2)$. We then note
  that, by Definitions (\ref{tunnel-def}),(\ref{modular-tunnel-def}),
  and (\ref{metrical-tunnel-def}):
  \begin{enumerate}
  \item $\rho$ is a modular tunnel from
    $(\module{M}_1,\CDN_1,\B_1,\TLip_1)$ to
    $(\module{M}_2,\CDN_2,\B_2,\TLip_2)$,
  \item $\mu$ is a tunnel from $(\A_1,\Lip_1)$ to $(\A_2,\Lip_2)$,
  \item $\module{P}$ is an $\D$-$\alg{E}$-C*-correspondence,
  \item
    $\forall e \in \alg{E} \quad \forall \omega \in \module{P} \quad
    \TDN(e\omega)\leq
    F_{\mathsf{mod}}(\norm{e}{\alg{E}},\Lip_{\alg{E}}(e),\TDN(\omega))$.
  \end{enumerate}
  In the rest of this paper, we will denote $\rho$ by
  $\tau_{\mathsf{mod}}$, and we will denote $\mu$ by
  $\tau_{\mathsf{base}}$, whenever needed.

  Conversely, if
  $\rho=(
  (\module{P},\TDN,\alg{E},\Lip_{\alg{E}}),(\Pi_1,\theta_1),(\Pi_2,\theta_2))$ and
  $\mu = (\D,\Lip_\D,\pi_1,\pi_2)$ satisfy (1)--(4) above,
  then
  $\tau =
  ((\module{P},\TDN,\D,\Lip_\D,\alg{E},\Lip_{\alg{E}}),(\Pi_1,\pi_1,\theta_1),(\Pi_2,\pi_2,\theta_2))$
  is a metrical tunnel from $\mathds{M}_1$ to $\mathds{M}_2$. Thus, it
  may sometimes be convenient to work with the pair $(\rho,\mu)$ in
  place of $\tau$.
\end{notation}

\begin{definition}[{\cite{Latremoliere18d}}]
  The \emph{extent}, $\tunnelextent{\tau}$, of a metrical tunnel
  \begin{equation*}
    \tau = \left( \left(\module{M},\TDN,\D,\Lip_\D,\alg{E},\Lip_{\alg{E}}\right), (\Pi_1,\pi_1,\theta_1),(\Pi_2,\pi_2,\theta_2) \right)
  \end{equation*}
  is given by
  \begin{equation*}
    \tunnelextent{\tau} = \max\left\{\tunnelextent{\left(\D,\Lip_\D,\pi_1,\pi_2\right)},\tunnelextent{\left(\alg{E},\Lip_{\alg{E}},\theta_1,\theta_2 \right)}\right\} \text{.}
  \end{equation*}
\end{definition}

\begin{definition}[{\cite{Latremoliere18d}}]\label{metrical-prop-def}
  Let $(F,F_{\mathsf{inner}},F_{\mathsf{mod}})$ be a permissible triple. The \emph{metrical
    propinquity}, $\dmetpropinquity{F}(\mathds{A},\mathds{B})$,
  between two $(F,F_{\mathsf{inner}},F_{\mathsf{mod}})$ metrical C*-correspondences $\mathds{A}$ and
  $\mathds{B}$ is the nonnegative number given by
  \begin{equation*}
    \dmetpropinquity{F}(\mathds{A},\mathds{B}) = \inf\left\{\tunnelextent{\tau} : \text{ $\tau$ is a metrical $(F,F_{\mathsf{inner}},F_{\mathsf{mod}})$-tunnel from $\mathds{A}$ to $\mathds{B}$} \right\}\text{.}
  \end{equation*}
\end{definition}

\begin{theorem}[{\cite{Latremoliere18d}}]\label{metrical-propinquity-thm}
  Let $(F,F_{\mathsf{inner}},F_{\mathsf{mod}})$ be a permissible triple of continuous functions. The metrical propinquity $\dmetpropinquity{F}$ is a complete metric, up to full quantum isometry, on the class of $(F,F_{\mathsf{inner}},F_{\mathsf{mod}})$--metrical
  C*-correspondences.
\end{theorem}

\begin{notation}
  When the context makes it clear, we will omit the permissible triple from the notation of the metrical propinquity.
\end{notation}

There is no additional changes needed in the proof of \cite[Theorem
4.9]{Latremoliere18d}, besides what we discussed after Theorem
(\ref{modular-prop-thm}). The only change in the proof of
\cite[Theorem 5.4]{Latremoliere18d} about the completeness of
$\dmetpropinquity{F}$ is just to verify that the limit is indeed a bimodule, and this follows immediately from the construction of this limit as a quotient of a bimodule.

A subclass of metrical C*-correspondences is given by metric spectral
triples via Theorem (\ref{Dnorm-thm}). Of interest is the meaning of
distance zero for the metrical propinquity, when applied to metric
spectral triples.

\begin{proposition}\label{spectral-mvb-zero-prop}
  We fix a permissible triple $(F,F_{\mathsf{inner}},F_{\mathsf{mod}})$.

  Let $(\A,\Hilbert_\A,D_\A)$ and $(\B,\Hilbert_\B,D_\B)$ be two
  metric spectral triples. The following assertions are equivalent:
  \begin{enumerate}
  \item
    $\dmetpropinquity{F}(\mvb{\A}{\Hilbert_\A}{D_\A},
    \mvb{\B}{\Hilbert_\B}{D_\B}) = 0$,
  \item there exists a full quantum isometry
    $\rho : (\A,\Lip_{D_\A}) \rightarrow (\B,\Lip_{D_\B})$ and a
    unitary $U : \Hilbert_\A \rightarrow \Hilbert_\B$ such that
    $U\dom{D_\A}=\dom{D_\B}$, and $\rho = \AdRep{U}$, while
    \begin{equation*}
      \norm{D_\A\xi}{\Hilbert_\A} = \norm{D_\B U\xi}{\Hilbert_\B} \text{.}
    \end{equation*}
  \end{enumerate}
\end{proposition}

\begin{proof}
  We identify $\A$ as its image acting on $\Hilbert_\A$ for the
  spectral triple $(\A,\Hilbert_\A,D_\A)$, and similarly with
  $\B$. Moreover, let
  $\CDN_\A:\xi\in\dom{D_\A}\mapsto \norm{\xi}{\Hilbert_\A} +
  \norm{D_\A\xi}{\Hilbert}$, and similarly with $\CDN_\B$.

  By Theorem (\ref{metrical-propinquity-thm}), since:
  \begin{equation*}
    \dmetpropinquity{F}(\mvb{\A}{\Hilbert_\A}{D_\A}),\mvb{\B}{\Hilbert_\B}{D_\B})) = 0
  \end{equation*}
  the {\gMVB s} $\mvb{\A}{\Hilbert_\A}{D_\A}$ and 
  $\mvb{\B}{\Hilbert_\B}{D_\B}$ are metrically isomorphic. Thus, there
  exists a full quantum isometry
  $\rho: (\A,\Lip_{D_\A}) \rightarrow (\B,\Lip_{D_\B})$ and a surjective linear
  isometry, i.e. a unitary $U : \Hilbert_\A \rightarrow \Hilbert_\B$
  such that
  \begin{enumerate}
  \item $U\dom{\CDN_\A}=\dom{\CDN_\B}$, i.e. $U\dom{D_\A}=\dom{D_\B}$,
  \item $\CDN_{\B}\circ U = \CDN_{\A}$ on $\dom{D_\A}$,
  \item $(\rho,U)$ is a module morphism from $\Hilbert_\A$ to
    $\Hilbert_\B$ (as modules over, respectively, $\A$ and $\B$)\text.
  \end{enumerate}
  There is also a full quantum isometry $\mathrm{\iota}$ from $(\C,0)$
  to itself such that $(\iota,U)$ is a Hilbert $\C$-module map, but of
  course, $\iota$ is the identity.

  Thus to begin with, if $a\in\A$ and $\xi\in\Hilbert_\A$, then, since
  $(\rho,U)$ is a module morphism:
  \begin{equation*}
    \rho(a)U\xi = U(a\xi) \text{ so }\forall \eta \in \Hilbert_\B \quad \rho(a)\eta = UaU^\ast\eta \text{.}
  \end{equation*}

  Moreover, since $\CDN_{\B}\circ U = \CDN_{\A}$ (including when
  either of these norms take the value $\infty$), we conclude, first,
  that $U$ maps $\dom{D_\A}$ onto $\dom{D_\B}$, and then, for all
  $\xi \in \dom{D_\A}$:
  \begin{equation*}
    \norm{\xi}{\Hilbert_\A} + \norm{D_\A \xi}{\Hilbert_\A} = \norm{U\xi}{\Hilbert_\B} + \norm{D_\B U \xi}{\Hilbert_\B}
  \end{equation*}
  and since $U$ is an isometry,
  $\norm{\xi}{\Hilbert_\A}=\norm{U\xi}{\Hilbert_\B}$, and therefore we
  conclude for all $\xi \in \dom{D_\A}$:
  \begin{equation}\label{norm-eq}
    \begin{split}
      \norm{D_\A \xi}{\Hilbert_\A}
      &= \norm{D_\B U \xi}{\Hilbert_\B} \\
      &=\norm{U^\ast D_\B U \xi}{\Hilbert_\A} \text{.}
    \end{split}
  \end{equation}

  This concludes our proof.
\end{proof}

While the metrical propinquity allows to recover some metric
information and domain information about metric spectral triples, we
aim at a stronger result in this paper, where we want to define a
distance on metric spectral triples, up to equivalence of spectral
triples. To this end, we propose to account for the natural quantum
dynamics given by a spectral triple on its underlying Hilbert space,
which is a particular case of an action of a monoid on a metrical
C*-correspondence. We therefore augment our previous construction
of the metrical propinquity to incorporate monoid actions. The next
section presents the construction at a higher level of generality than
needed for spectral triples, but the proofs are not any more involved
(in fact, the higher generality makes the exposition clearer), and
this construction can be used for other examples, such as dealing with
the action of the Heisenberg group on Heisenberg modules over quantum
tori, for example.

\section{The covariant Metrical Propinquity}

We begin by constructing the covariant modular propinquity, defined on
the class of objects consisting of {\gQVB s} endowed with a proper
monoid action, appropriately defined as follows.

\begin{definition}[{\cite{Latremoliere18b}}]
  A \emph{proper metric monoid} $(G,\delta)$ is a monoid $G$ and a left invariant metric $\delta$ on $G$ which induces a topology of a proper metric space on $G$ (i.e. a topology for which all closed balls are compact) for which the multiplication on $G$ is continuous.
\end{definition}

Lipschitz dynamical systems are actions of proper metric monoids on
{\qcms s}. While we developed the covariant propinquity between such
systems which acts by positive linear maps \cite{Latremoliere18b}, for
our current purpose, we will focus on actions by *-endomorphisms.

\begin{definition}[{\cite{Latremoliere18b}}]\label{Lipschitz-dynamical-system-def}
  A \emph{Lipschitz dynamical system} $(\A,\Lip,\alpha,H,\delta_H)$ is
  a {\qcms} $(\A,\Lip)$, a proper metric monoid $H$ and a
  monoid morphism $\alpha$ from $H$ to the monoid of *-endomorphisms
  of $\A$, such that:
  \begin{enumerate}
  \item $\alpha$ is strongly continuous: for all $a\in\A$ and
    $g \in H$, we have
    \begin{equation*}
      \lim_{h\rightarrow g} \norm{\alpha^g(a)-\alpha^h(a)}{\A} = 0\text{, }
    \end{equation*}
  \item for all $h \in H$, the *-endomorphisms $\alpha^h$ satisfies
    $\alpha^h(\dom{\Lip})\subseteq\dom{\Lip}$,
  \item there exists a locally bounded function
    $K: H \rightarrow [0,\infty)$ such that, for all $h\in H$, we have
    $\Lip\circ\alpha^h \leq K(h)\Lip$.
  \end{enumerate}
\end{definition}
Condition (2) in Definition (\ref{Lipschitz-dynamical-system-def}) is
actually one of several equivalent definitions of a \emph{Lipschitz
  morphism} \cite{Latremoliere16b}, and in particular, Condition (2)
implies that, for all $h \in H$, there indeed exists
$K(h)\in[0,\infty)$ such that $\Lip\circ\alpha^h \leq K(h)\Lip$;
Condition (3) adds a minimum regularity on such a function $K$.

\begin{definition}\label{covariant-system-def}
  Let $(F,F_{\mathsf{inner}})$ be a permissible pair. A \emph{covariant modular $(F,F_{\mathsf{inner}})$-system} $\CMS{\module{M}}{\CDN}{\beta}{(G,\delta_G,q)}{\A}{\Lip}{\alpha}{(H,\delta_H)}$ is given by:
  \begin{enumerate}
  \item a {\QVB{F}} $(\module{M},\CDN,\A,\Lip)$,
  \item a Lipschitz dynamical system $(\A,\Lip,\alpha,H,\delta_H)$,
  \item a proper metric monoid $(G,\delta_G)$,
  \item a continuous monoid morphism $q$ from $(G,\delta_G)$ to
    $(H,\delta_H)$,
  \item for each $g \in G$, we have an $\A$-linear endomorphism $\beta^g$ of $\module{M}$ such that:
    \begin{enumerate}
    \item $g\in G \mapsto \beta^g$ is a monoid morphism,
    \item the pair $(\beta^g,\alpha^{q(g)})$ is a Hilbert module map.
    \item for all $\omega\in\module{M}$ and $g\in G$, we have:
      \begin{equation*}
        \lim_{h\rightarrow g}\norm{\beta^h(\omega)-\beta^g(\omega)}{\module{M}} = 0\text{,}
      \end{equation*}
    \item there exists a locally bounded function
      $K : G\rightarrow [0,\infty)$ such that for all $g \in G$, we
      have $\CDN\circ\beta^g \leq K(g) \CDN$.
    \end{enumerate}
  \end{enumerate}
\end{definition}

\begin{remark}
  Using the notation of Definition (\ref{covariant-system-def}), we note that, for all $g \in G$, and for all $\xi \in \module{M}$, the following inequality holds:
  \begin{equation*}
    \norm{\inner{\beta^g\xi}{\beta^g\xi}{\module{M}}}{\A} = \norm{\alpha^{q(g)}(\inner{\xi}{\xi}{\module{M}})}{\A} \leq \norm{\inner{\xi}{\xi}{\module{M}}}{\A} \text,
  \end{equation*}
  and thus $\opnorm{\beta^g}{}{\module{M}} \leq 1$.
\end{remark}

We recall from \cite{Latremoliere18b} how to define a covariant
version of the Gromov-Hausdorff distance between proper metric
monoids. The key ingredient is an approximate notion of an almost
isometric isomorphism, defined as follows.
    
\begin{notation}
  If $(G,d)$ is a metric monoid, then the closed ball centered at the
  unit of $G$, and of radius $r \geq 0$, is denoted by $G[r]$.
\end{notation}

\begin{definition}[{\cite{Latremoliere18b}}]
  Let $(G_1,\delta_1)$ and $(G_2,\delta_2)$ be two proper metric monoids.
  
  A $r$-local $\varepsilon$-almost isometric isomorphism
  $(\varsigma_1,\varsigma_2)$ from $(G_1,\delta_1)$ to
  $(G_2,\delta_2)$ is a pair of maps
  $\varsigma_1 : G_1 \rightarrow G_2$ and
  $\varsigma_2 : G_2 \rightarrow G_1$ such that for all
  $\{j,k\} = \{1,2\}$:
  \begin{enumerate}
  \item $\varsigma_j$ maps the unit of $G_j$ to the unit of $G_k$,
  \item for all $g,g'\in G_j[r]$ and $h \in G_k[r]$:
    \begin{equation*}
      \left| \delta_k(\varsigma_j(g)\varsigma_j(g'),h) - \delta_j(g g', \varsigma_k(h))  \right| \leq \varepsilon\text{.}
    \end{equation*}
  \end{enumerate}
  The set of all $r$-local $\varepsilon$-almost isometric isomorphisms
  is denoted by:
  \begin{equation*}
    \UIso{\varepsilon}{(G_1,\delta_1)}{(G_2,\delta_2)}{r} \text{.}
  \end{equation*}
\end{definition}

Local, almost isometries enjoy a natural composition property, which
is the reason why the covariant Gromov-Hausdorff distance they define
is indeed a metric:

\begin{theorem}[{\cite{Latremoliere18b}}]\label{composition-thm}
  Let $(G_1,\delta_1)$, $(G_2,\delta_2)$ and $(G_3,\delta_3)$ be three proper metric monoids.

  Let
  $\varepsilon_1,\varepsilon_2 \in \left( 0 ,
    \frac{\sqrt{2}}{2}\right)$. If
  $\varsigma = (\varsigma_1,\varsigma_2) \in
  \UIso{\varepsilon_1}{G_1}{G_2}{\frac{1}{\varepsilon_1}}$ and
  $\varkappa = (\varkappa_1,\varkappa_2) \in
  \UIso{\varepsilon_2}{G_2}{G_3}{\frac{1}{\varepsilon_2}}$ then:
  \begin{equation*}
    (\varkappa_1\circ\varsigma_1,\varsigma_2\circ\varkappa_2) \in \UIso{\varepsilon_1 + \varepsilon_2}{G_1}{G_3}{\frac{1}{\varepsilon_1 + \varepsilon_2}} \text{.}
  \end{equation*}
  We denote
  $(\varkappa_1\circ\varsigma_1,\varsigma_2\circ\varkappa_2)$ by
  $\varkappa\circ\varsigma$.
\end{theorem}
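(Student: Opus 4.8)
The plan is to verify directly that the pair $\tau = (\tau_1,\tau_2) := (\varkappa_1\circ\varsigma_1,\varsigma_2\circ\varkappa_2)$ meets the two conditions defining an element of $\UIso{\varepsilon_1+\varepsilon_2}{G_1}{G_3}{\frac{1}{\varepsilon_1+\varepsilon_2}}$. Throughout I write $r = \frac{1}{\varepsilon_1+\varepsilon_2}$ and let $e_1,e_2,e_3$ denote the units of $G_1,G_2,G_3$. The first (unit) condition is immediate: $\tau_1(e_1) = \varkappa_1(\varsigma_1(e_1)) = \varkappa_1(e_2) = e_3$, and symmetrically $\tau_2(e_3) = \varsigma_2(\varkappa_2(e_3)) = e_1$. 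The whole content lies in the second condition, which I would establish by passing through $G_2$ and chaining the two given almost-isometry inequalities.

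Fix $g,g'\in G_1[r]$ and $h\in G_3[r]$. The decisive preliminary step is to control the sizes of the intermediate points $\varsigma_1(g),\varsigma_1(g')\in G_2$ and $\varkappa_2(h)\in G_2$, so that the defining inequalities of $\varsigma$ and $\varkappa$ may legitimately be applied to them. Feeding units into the defining inequality of $\varsigma$ (taking the second argument $g'=e_1$ and the target $e_2$) gives $\left|\delta_2(\varsigma_1(g),e_2)-\delta_1(g,e_1)\right|\leq\varepsilon_1$, hence $\delta_2(\varsigma_1(g),e_2)\leq r+\varepsilon_1$; likewise the defining inequality of $\varkappa$ in the direction $G_3\rightarrow G_2$ (with $e_3$ and target $e_2$) yields $\delta_2(\varkappa_2(h),e_2)\leq r+\varepsilon_2$.

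Here lies the main obstacle, and the reason for the hypothesis $\varepsilon_1,\varepsilon_2\in\left(0,\frac{\sqrt{2}}{2}\right)$. To apply the $\varepsilon_2$-almost isometry $\varkappa$, which is only valid at scale $\frac{1}{\varepsilon_2}$, to the points $\varsigma_1(g),\varsigma_1(g')$, I need $r+\varepsilon_1\leq\frac{1}{\varepsilon_2}$; and to apply the $\varepsilon_1$-almost isometry $\varsigma$, valid at scale $\frac{1}{\varepsilon_1}$, to $\varkappa_2(h)$, I need $r+\varepsilon_2\leq\frac{1}{\varepsilon_1}$. A short computation shows these are respectively equivalent to $\varepsilon_2(\varepsilon_1+\varepsilon_2)\leq 1$ and $\varepsilon_1(\varepsilon_1+\varepsilon_2)\leq 1$, both of which follow at once from $\varepsilon_i<\frac{\sqrt{2}}{2}$ and $\varepsilon_1+\varepsilon_2<\sqrt{2}$. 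I would isolate these scale estimates as the crux, since every invocation of a defining inequality below silently depends on them; note also that $r\leq\frac{1}{\varepsilon_1}$ and $r\leq\frac{1}{\varepsilon_2}$, so $g,g',h$ themselves lie in the required balls.

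With the scales secured, I would apply the defining inequality of $\varkappa$ at scale $\frac{1}{\varepsilon_2}$ to the triple $(\varsigma_1(g),\varsigma_1(g'),h)$ to obtain
\[
  \left| \delta_3\left(\tau_1(g)\tau_1(g'),h\right) - \delta_2\left(\varsigma_1(g)\varsigma_1(g'),\varkappa_2(h)\right) \right| \leq \varepsilon_2 \text{,}
\]
and the defining inequality of $\varsigma$ at scale $\frac{1}{\varepsilon_1}$ to the triple $(g,g',\varkappa_2(h))$ to obtain
\[
  \left| \delta_2\left(\varsigma_1(g)\varsigma_1(g'),\varkappa_2(h)\right) - \delta_1\left(gg',\tau_2(h)\right) \right| \leq \varepsilon_1 \text{.}
\]
The triangle inequality for the absolute value then gives $\left|\delta_3(\tau_1(g)\tau_1(g'),h)-\delta_1(gg',\tau_2(h))\right|\leq\varepsilon_1+\varepsilon_2$, which is exactly the desired estimate for the ordered pair $(1,3)$. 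The reverse ordered pair $(3,1)$ follows verbatim after interchanging the roles of $(G_1,\varsigma,\varepsilon_1)$ and $(G_3,\varkappa,\varepsilon_2)$ and using the opposite directions of the two defining inequalities, so both instances of the condition ``for all $\{j,k\}$'' hold and the verification is complete.
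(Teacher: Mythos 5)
Your argument is correct: the unit condition is immediate, and the heart of the matter is exactly the scale bookkeeping you isolate --- bounding $\delta_2(\varsigma_1(g),e_2)\leq r+\varepsilon_1$ and $\delta_2(\varkappa_2(h),e_2)\leq r+\varepsilon_2$ by feeding units into the defining inequalities, checking $r+\varepsilon_1\leq\frac{1}{\varepsilon_2}$ and $r+\varepsilon_2\leq\frac{1}{\varepsilon_1}$ via $\varepsilon_i<\frac{\sqrt{2}}{2}$, and then chaining the two almost-isometry estimates through $G_2$ with the triangle inequality. Note that the paper itself gives no proof of this statement (it is imported from the cited reference on the covariant propinquity), so there is nothing in-text to compare against; your proof is the natural one and supplies precisely the verification the paper leaves to the reference.
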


If $(G,\delta_G)$ and $(H,\delta_H)$ are two proper metric monoids,
then we define their covariant Gromov-Hausdorff distance
$\Upsilon((G,\delta_G),(H,\delta_H))$ as:
\begin{equation*}
  \min\left\{\frac{\sqrt{2}}{2}, \inf\left\{\varepsilon > 0 : \UIso{\varepsilon}{(G,\delta_G)}{(H,\delta_H)}{\frac{1}{\varepsilon}} \not= \emptyset \right\} \right\} \text{,}
\end{equation*}
and we proved in \cite{Latremoliere18b} that $\Upsilon$ is a metric up
to isometric isomorphism of metric monoids on the class of proper
metric monoids; moreover we study conditions on classes of proper
metric monoids to be complete in \cite{Latremoliere18c}. For our
purpose, we will focus on how to use these ideas to construct a
covariant version of $\dmodpropinquity{}$.

We begin with a simple observation. The modular propinquity does not
involve the computation of any quantity directly involving the modules
--- the extent of the basic tunnel is all that is needed. Thus, the
various requirements placed on modular tunnels, regarding maps being
quantum isometries, are sufficient to ensure that the basic tunnel's
extent encodes information about the distance between
modules. However, for our current effort, we introduce another
numerical quantity associated with modular tunnels. This quantity
generalizes the notion of the reach of a tunnel \cite{Latremoliere13b}
--- we will see, in particular, why this quantity is redundant for the
modular quantity.

We begin by defining a form of the {\MongeKant} on the (topological) dual of the underlying module of any {\gQVB}.

\begin{notation}\label{MongeKantAlt-def}
  Let $(\module{M},\CDN,\A,\Lip)$ be a {\gQVB}. For any two continuous linear functionals $\mu:\module{M}\rightarrow\C$ and $\nu:\module{M}\rightarrow\C$ over $\module{M}$, we define:
  \begin{equation*}
    \KantorovichAlt{\CDN}(\mu , \nu) =  \sup_{\substack{ \zeta\in\module{M} \\ \CDN(\zeta)\leq 1 }} \left|\mu(\zeta)-\nu(\zeta) \right|\text.
  \end{equation*}
\end{notation}

Since $\CDN$ dominates the norm, $\KantorovichAlt{\CDN}$ is always finite. Since the closed unit ball of a D-norm is a total set by definition (it has dense $\C$-linear span), $\KantorovichAlt{\CDN}$ is always a metric on the topological dual of $\module{M}$.

However, Hilbert modules need not be self-dual in general, and for our purpose, we will work with a specific subset of continuous linear functionals, which is particularly relevant to our constructions.

\begin{notation}\label{pseudo-state-notation}
  Let $(\module{M},\CDN,\A,\Lip)$ be a {\gQVB}. For any continuous linear functional $\varphi : \A\rightarrow\C$, and for any $\omega\in\module{M}$, we write $\varphi\odot\omega$ for the continuous linear functional $\eta\in\module{M} \mapsto \varphi\left(\inner{\omega}{\eta}{\module{M}}\right)$ over $\module{M}$. We denote the set of all such continuous linear functionals over $\module{M}$ by $\FalseDual{\module{M}}$,i.e.
  \begin{equation*}
    \FalseDual{\module{M}} = \left\{ \varphi\odot\omega : \varphi \in \A', \omega\in\module{M} \right\} \text.
  \end{equation*}

  In particular, the \emph{set of pseudo-states} $\ModStateSpace(\module{M},\CDN)$ of $\module{M}$ is the following subset of $\FalseDual{\module{M}}$:
  \begin{equation*} 
    \left\{ \varphi\odot\omega \in \FalseDual{\module{M}} : \varphi \in \StateSpace(\A), \omega\in\module{M}, \CDN(\omega)\leq 1 \right\}\text{.}
  \end{equation*}
\end{notation}

\begin{remark}
  The set $\ModStateSpace(\module{M})$ is not convex in general. Thus, it
  may be that future applications will prefer to work with the convex
  hull of $\ModStateSpace(\module{M})$, though for our purpose, such a
  change would not affect our work, and the present choice is quite
  natural and easier to handle.
\end{remark}

The topology induced by our new {\MongeKant} on the set of pseudo-states of a {\gQVB} is the weak* topology.

\begin{proposition}
  Let $(\module{M},\CDN,\A,\Lip)$ be a {\gQVB}. The topology induced
  by $\KantorovichAlt{\CDN}$ on $\ModStateSpace(\module{M})$ is the weak*
  topology; moreover the set $\ModStateSpace(\module{M})$ of pseudo-states of 
  $\module{M}$ is weak* compact.
\end{proposition}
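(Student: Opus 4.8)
The plan is to reduce everything to the single structural fact that does the real work: the $\CDN$-unit ball $\module{M}_1 = \left\{\zeta\in\module{M} : \CDN(\zeta)\leq 1\right\}$ is norm-compact, so that $\KantorovichAlt{\CDN}$ is nothing but the metric of uniform convergence over the compact set $\module{M}_1$. First I would record the elementary bounds. Since $\CDN(\omega)\leq 1$ forces $\norm{\omega}{\module{M}}\leq\CDN(\omega)\leq 1$, the Cauchy--Schwarz inequality for Hilbert modules gives $\left|(\varphi\odot\omega)(\zeta)\right| = \left|\varphi\left(\inner{\zeta}{\omega}{\module{M}}\right)\right| \leq \norm{\inner{\zeta}{\omega}{\module{M}}}{\A} \leq \norm{\zeta}{\module{M}}$, so each element of $\StateSpace(\module{M})$ is a continuous linear functional of norm at most $1$. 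Thus $\StateSpace(\module{M})$ sits inside the closed unit ball of $\module{M}^\ast$, on which the Banach--Alaoglu theorem supplies a compact weak* topology, and the weak* topology on $\StateSpace(\module{M})$ is the subspace topology of pointwise convergence on $\module{M}$.

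I would then obtain weak* compactness by realizing $\StateSpace(\module{M})$ as a continuous image of a compact set. Consider $\Phi : \StateSpace(\A)\times\module{M}_1 \to \module{M}^\ast$, $(\varphi,\omega)\mapsto\varphi\odot\omega$, where $\StateSpace(\A)$ carries its compact weak* topology and $\module{M}_1$ its (by hypothesis compact) norm topology; the product is compact. To see $\Phi$ is continuous into the weak* topology, fix $\zeta\in\module{M}$ and, for a convergent net $(\varphi_\lambda,\omega_\lambda)\to(\varphi,\omega)$, split
\[
  (\varphi_\lambda\odot\omega_\lambda)(\zeta) - (\varphi\odot\omega)(\zeta) = \varphi_\lambda\!\left(\inner{\zeta}{\omega_\lambda-\omega}{\module{M}}\right) + \left(\varphi_\lambda-\varphi\right)\!\left(\inner{\zeta}{\omega}{\module{M}}\right).
\]
The first term is bounded by $\norm{\zeta}{\module{M}}\norm{\omega_\lambda-\omega}{\module{M}}\to 0$, and the second tends to $0$ by weak* convergence of $\varphi_\lambda$ against the fixed element $\inner{\zeta}{\omega}{\module{M}}\in\A$. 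Hence $\StateSpace(\module{M})=\Phi(\StateSpace(\A)\times\module{M}_1)$ is weak* compact, and in particular weak* closed in the unit ball of $\module{M}^\ast$.

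It remains to match the two topologies. I would first verify that $\KantorovichAlt{\CDN}$ genuinely separates points of $\StateSpace(\module{M})$: if two such functionals are distinct they differ somewhere on the dense subspace $\dom{\CDN}$, and rescaling that point into $\module{M}_1$ makes the defining supremum strictly positive, so the metric topology is Hausdorff. I would then show that the identity from $(\StateSpace(\module{M}),\text{weak*})$ to $(\StateSpace(\module{M}),\KantorovichAlt{\CDN})$ is continuous, i.e. that weak* (pointwise) convergence of a net forces uniform convergence over $\module{M}_1$. Given $\varepsilon>0$, I would choose a finite $\tfrac{\varepsilon}{3}$-net $\zeta_1,\dots,\zeta_m$ of the compact set $\module{M}_1$, apply pointwise convergence at these finitely many points, and bound the error at an arbitrary $\zeta\in\module{M}_1$ by a three-term estimate, using that every functional in play has norm at most $1$. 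Taking the supremum over $\zeta$ yields $\KantorovichAlt{\CDN}$-convergence. Since a continuous bijection from a compact space to a Hausdorff space is a homeomorphism, the weak* topology and the $\KantorovichAlt{\CDN}$-topology on $\StateSpace(\module{M})$ coincide, completing the proposed proof.

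The \emph{main obstacle} is precisely this last continuity step: weak* convergence is only pointwise, and upgrading it to uniformity over $\module{M}_1$ is exactly where the argument must be careful, and where the norm-compactness of the $\CDN$-unit ball (condition (b) in Definition \ref{metrical-bundle-def}) is used essentially; the same compactness also underlies the weak* compactness in the second step. Everything else --- the Cauchy--Schwarz bound, the weak* compactness of $\StateSpace(\A)$, and the compact-to-Hausdorff principle --- is routine.
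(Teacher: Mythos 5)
Your proof is correct, and while it shares the decisive step with the paper, it packages the argument differently in two places. The common core is the upgrade from pointwise (weak*) convergence to uniform convergence over the $\CDN$-unit ball via a finite $\frac{\varepsilon}{3}$-net and a three-term estimate --- this is exactly the computation in the paper's proof, and it is where the norm-compactness of $\left\{\zeta\in\module{M}:\CDN(\zeta)\leq 1\right\}$ does the real work, as you correctly identify. Where you diverge: (i) for weak* compactness, the paper argues sequentially, extracting subsequences of $(\varphi_n)$ and $(\omega_n)$ to show $\StateSpace(\module{M})$ is weak* closed in the dual unit ball, whereas you realize $\StateSpace(\module{M})$ directly as the continuous image of the compact set $\StateSpace(\A)\times\module{M}_1$ under $(\varphi,\omega)\mapsto\varphi\odot\omega$ --- a cleaner route that also sidesteps any metrizability concerns inherent in a purely sequential treatment of the weak* topology; (ii) for the reverse implication (metric convergence implies weak* convergence), the paper gives a direct estimate using density of $\dom{\CDN}$ and a rescaling by $\CDN(\eta)$, whereas you get it for free from the compact-to-Hausdorff principle, at the cost of the (easy) verification that $\KantorovichAlt{\CDN}$ separates points of $\StateSpace(\module{M})$. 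Both of your substitutions are sound; your version trades the paper's explicit computations for slightly more topology, and is arguably the more economical write-up.
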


\begin{proof}
  Let $(\varphi_n)_{n\in\N}$ be a sequence in $\StateSpace(\A)$ and
  let $(\omega_n)_{n\in\N}$ be a sequence in
  $\{\omega\in\module{M}:\CDN(\omega)\leq 1\}$.

  Assume first that $(\varphi_n\odot\omega_n)_{n\in\N}$ converges
  weakly-* to some linear functional $\mu$ over $\module{M}$. By
  compactness of both $\StateSpace(\A)$, for the weak* topology, and
  of $\{\omega\in\module{M}:\CDN(\omega)\leq 1\}$, for the norm
  topology, there exists a subsequence $(\varphi_{f(n)})_{n\in\N}$ of
  $(\varphi_n)_{n\in\N}$ weak* converging to some
  $\varphi \in \StateSpace(\A)$, and there exists a subsequence
  $(\omega_{g(n)})_{n\in\N}$ of $(\omega_n)_{n\in\N}$ converging to
  some $\omega$ in norm. Note that by lower semicontinuity of $\CDN$,
  we have $\CDN(\omega)\leq 1$. Up to extracting further subsequences,
  we assume $f = g$ without loss of generality.

  Let $\zeta\in\module{M}$ and let $\varepsilon > 0$. Since
  $(\varphi_{f(n)})_{n\in\N}$ weak* converges to $\varphi$, there
  exists $N_1\in\N$ such that if $n\geq N_1$ then
  $|\varphi_{f(n)}(\inner{\omega}{\zeta}{\module{M}}) -
  \varphi(\inner{\omega}{\zeta}{\module{M}})|<\frac{\varepsilon}{3}$. Moreover,
  since $(\omega_{f(n)})_{n\in\N}$ converges to $\omega$ in norm,
  there exists $N_2 \in \N$ such that if $n\geq N_2$ then
  $\norm{\omega-\omega_{f(n)}}{\module{M}} \leq \frac{\varepsilon}{3
    (\norm{\zeta}{\module{M}}+1)}$. Last, as $\mu$ is the weak* limit
  of $(\varphi_n\odot\omega_n)_{n\in\N}$, there exists $N_3 \in \N$
  such that if $n\geq N_3$ then
  $|\mu(\zeta) - \varphi_n\odot\omega_n(\zeta)| <
  \frac{\varepsilon}{3}$.

  If $n\geq \max\{N_1,N_2,N_3\}$ then:
  \begin{align*}
    \left|\mu(\zeta) - \varphi\odot\omega(\zeta)\right| 
    &\leq \left|\mu(\zeta)-\varphi_{f(n)}\odot\omega_{f(n)}(\zeta)\right| + \left|\varphi_{f(n)}\odot\omega_{f(n)}(\zeta) - \varphi\odot\omega(\zeta) \right| \\
    &\leq \frac{\varepsilon}{3} + \left|\varphi_{f(n)}(\inner{\omega_{f(n)}}{\zeta}{\module{M}}) - \varphi_{f(n)}(\inner{\omega}{\zeta}{\module{M}}) \right| \\
    &\quad+ \left|\varphi_{f(n)}(\inner{\omega}{\zeta}{\module{M}})-\varphi(\inner{\omega}{\zeta}{\module{M}})\right| \\
    &\leq \frac{\varepsilon}{3} + \norm{\inner{\omega-\omega_{f(n)}}{\zeta}{\module{M}}}{\A} + \frac{\varepsilon}{3} \\
    &\leq \frac{\varepsilon}{3} + \norm{\omega-\omega_{f(n)}}{\module{M}}\norm{\zeta}{\module{M}} + \frac{\varepsilon}{3} \text{ using Cauchy-Schwarz,}\\
    &\leq \varepsilon \text{.}
  \end{align*}
  Therefore $\mu(\zeta) = \varphi\odot\omega(\zeta)$, since
  $\varepsilon > 0$ is arbitrary. As $\zeta\in\module{M}$ is arbitrary
  as well, we conclude $\mu = \varphi\odot\omega$. Thus
  $\ModStateSpace(\module{M})$ is weak* closed. As it is a subset of the
  unit ball of the dual of $\module{M}$, we conclude that
  $\ModStateSpace(\module{M})$ is weak* compact, by the Banach-Alaoglu
  Theorem.

  Now, the rest of our proof is a standard argument --- see, for instance, \cite[Theorem 1.8]{Rieffel98a}, though we do not quite fit that theorem (because condition (1.3d) is not met here).
  
  We now prove that if a sequence
  $\left(\varphi_n\odot\omega_n\right)_{n\in\N}$ in
  $\ModStateSpace(\module{M})$ converges to $\mu = \varphi\odot\omega$
  for the weak* topology, then it converges to $\mu$ for
  $\KantorovichAlt{\CDN}$. Let $\varepsilon > 0$. Since
  $\left\{\omega\in\module{M}:\CDN(\omega)\leq 1\right\}$ is compact
  for $\norm{\cdot}{\module{M}}$, there exists a finite
  $\frac{\varepsilon}{3}$-dense subset $F$ of
  $\left\{\omega\in\module{M}:\CDN(\omega)\leq 1\right\}$ for the
  norm. As $F$ is finite, and since
  $(\varphi_n\odot\omega_n)_{n\in\N}$ converges to
  $\varphi\odot\omega$ for the weak* topology, there exists $N\in\N$
  such that if $n\geq N$ then
  $|\varphi_n\odot\omega_n(\eta)-\varphi\odot\omega(\eta)| <
  \frac{\varepsilon}{3}$ for all $\eta\in F$.

  Let now $\zeta\in \{\omega\in\module{M}:\CDN(\omega)\leq 1\}$. By
  construction, there exists $\eta\in F$ such that
  $\norm{\zeta-\eta}{\module{M}} < \frac{\varepsilon}{3}$. Since
  $\norm{\omega_n}{\module{M}}\leq\CDN(\omega_n)\leq 1$ for all
  $n\in\N$ and similarly since $\norm{\omega}{}\leq 1$, we then have,
  for all $n\geq N$, that
  \begin{align*}
    \left|\varphi\odot\omega(\zeta)-\varphi_n\odot\omega_n(\zeta)\right| 
    &\leq \left|\varphi\odot\omega(\zeta)-\varphi\odot\omega(\eta)\right| + \left|\varphi\odot\omega(\eta)-\varphi_n\odot\omega_n(\eta)\right| \\
    &\quad + \left|\varphi_n\odot\omega_n(\eta)-\varphi_n\odot\omega_n(\zeta)\right| \\
    &\leq \norm{\inner{\omega}{\zeta - \eta}{}}{\A} + \frac{\varepsilon}{3} + \norm{\inner{\omega_n}{\zeta - \eta}{}}{\A} \\
    &\leq \norm{\omega}{\module{M}} \norm{\zeta-\eta}{\module{M}} + \frac{\varepsilon}{3} \\
    &\quad + \norm{\omega_n}{\module{M}}\norm{\zeta-\eta}{\module{M}} \text{ by Cauchy-Schwarz,}\\
    &\leq \varepsilon \text{.}
  \end{align*}
  Therefore, if $n\geq N$, then
  $\KantorovichAlt{\CDN}\left(\varphi\odot\omega,\varphi_n\odot\omega_n\right)
  \leq \varepsilon$. In conclusion, if
  $\left(\varphi_n\odot\omega_n\right)_{n\in\N}$ is a weak* convergent
  sequence in $\ModStateSpace(\module{M})$, with limit
  $\varphi\odot\omega$, then
  $\lim_{n\rightarrow\infty}\KantorovichAlt{\CDN}(\varphi_n\odot\omega_n,\varphi\odot\omega)
  = 0$.

  We now turn to the converse: we assume that a sequence
  $(\varphi_n\odot\omega_n)_{n\in\N}$ in $\ModStateSpace(\module{M})$
  converges to some $\varphi\odot\omega \in \ModStateSpace(\module{M})$
  for $\KantorovichAlt{\CDN}{}$. This part of our proof is similar to \cite[Proposition 1.4]{Rieffel98a}. Let $\zeta\in\module{M}$ and
  $\varepsilon > 0$. By density of the domain of $\CDN$, there exists
  $\eta \in \dom{\CDN}$ such that
  $\norm{\zeta-\eta}{\module{M}} < \frac{\varepsilon}{3}$. Then, there
  exists $N\in\N$ such that if $n\geq N$ then
  $\KantorovichAlt{\CDN}\left(\varphi_n\odot\omega_n,\varphi\odot\omega\right)
  < \frac{\varepsilon}{3(\CDN(\eta)+1)}$. Thus in particular,
  $|\varphi_n\odot\omega_n(\eta) - \varphi\odot\omega(\eta)| <
  \frac{\varepsilon}{3}$ if $n\geq N$.

  Hence if $n\geq N$ then, as above:
  \begin{align*}
    |\varphi_n\odot \omega_n(\zeta) - \varphi\odot\omega(\zeta)|
    &\leq |\varphi_n\odot \omega_n(\zeta) - \varphi_n\odot\omega_n(\eta)| + |\varphi_n\odot \omega_n(\eta) - \varphi\odot\omega(\eta)| \\
    &\quad + |\varphi\odot \omega(\eta) - \varphi\odot\omega(\zeta)| \\
    &\leq \norm{\inner{\omega_n}{\zeta-\eta}{}}{\A} + \frac{\varepsilon}{3} + \norm{\inner{\omega}{\zeta-\eta}{}}{\A} \\
    &\leq 2\norm{\zeta-\eta}{\module{M}} + \frac{\varepsilon}{3} \leq \varepsilon \text{.}
  \end{align*}
  Hence, $\left(\varphi_n\odot\omega_n\right)_{n\in\N}$ weak*
  converges to $\varphi\odot\omega$ as desired.
\end{proof}

The dual propinquity between {\qcms s} is defined using the extent of
tunnels, though originally \cite{Latremoliere13b} we used a somewhat
different construction using quantities called reach and height. The
relevance of this observation is that while the extent has better
properties, the reach is helpful in defining the covariant version of
the propinquity between Lipschitz dynamical systems.

For this construction, we will use the notion of target sets defined
by tunnels.  As explained in
\cite{Latremoliere13,Latremoliere13b,Latremoliere14,Latremoliere16c,Latremoliere17c,Latremoliere18b,Latremoliere18c,Latremoliere18d},
tunnels are a form of ``almost morphisms'' which induce set-valued
maps which behave as morphisms, using the following definitions:

\begin{definition}[{\cite{Latremoliere13b,Latremoliere18d}}]
  Let $(\A,\Lip_\A)$ and $(\B,\Lip_\B)$ be two {\qcms s}. If $\tau = (\D,\Lip_\D,\pi_\A,\pi_\B)$ is a tunnel, if $a\in\dom{\Lip_\A}$ and if $l\geq\Lip_\A(a)$ then the \emph{target $l$-set} $\targetsettunnel{\tau}{a}{l}$ of $a$ is:
  \begin{equation*}
    \targetsettunnel{\tau}{a}{l} = \left\{ \pi_\B(d) : d\in\dom{\Lip_\D} \text{ such that }\pi_\A(d)=a,  \Lip_\D(d)\leq l \right\} \text{.}
  \end{equation*}
  Let $\mathds{A} = (\module{M},\CDN_\A,\A,\Lip_\A)$ and $\mathds{B} = (\module{N},\CDN_\B,\B,\Lip_\B)$ be two {\gQVB s}. If $\tau = (\mathds{P},(\Pi_\A,\pi_\A),(\Pi_\B,\pi_\B))$ is a modular tunnel with $\mathds{P} = (\module{P},\CDN,\D,\Lip_\D)$, if $\omega\in\module{M}$ and if $l\geq \CDN_\A(\omega)$, then the
  \emph{target $l$-set} $\targetsettunnel{\tau}{\omega}{l}$ of $\omega$ is:
  \begin{equation*}
    \targetsettunnel{\tau}{\omega}{l} = \left\{ \Pi_\B(\zeta) : \zeta\in\dom{\CDN} \text{ such that }\Pi_\A(\zeta)=\omega,  \CDN(\zeta)\leq l \right\} \text{.}
  \end{equation*}
  Moreover, if $a\in\dom{\Lip_\A}$ and $l\geq\Lip_\A(a)$ then we write $\targetsettunnel{\tau}{a}{l}$ for $\targetsettunnel{\tau_\flat}{a}{l}$ where
  $\tau_\flat = (\D,\Lip_\D,\pi_\A,\pi_\B)$.
\end{definition}

\begin{proposition}\label{reach-prop}
  Let $\mathds{A} = (\module{M},\CDN_\A,\A,\Lip_\A)$ and
  $\mathds{B} = (\module{N},\CDN_\B,\B,\Lip_\B)$ be two {\QVB{F}s},
  and let
  $\tau = (\mathds{P},(\Theta_\A,\theta_\A),(\Theta_\B,\theta_\B))$ be
  a modular $(F,F_{\mathsf{inner}})$-tunnel from $\mathds{A}$ to $\mathds{B}$, with
  $\mathds{P} = (\module{P},\CDN,\D,\Lip_\D)$. We have:
  \begin{equation*}
    \Haus{\KantorovichAlt{\CDN}}\left(\left\{\mu\circ\Theta_\A :\mu\in\ModStateSpace(\module{M}) \right\}, \left\{ \nu\circ\Theta_\B:\nu\in\ModStateSpace(\module{N}) \right\} \right) \leq 2 H \tunnelextent{\tau}
  \end{equation*}
  where $H = H(1,1)$.
\end{proposition}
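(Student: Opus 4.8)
The plan is to bound the two one-sided Hausdorff distances separately and symmetrically, by exhibiting for each pulled-back module state on one side a nearby pulled-back module state on the other. The first step I would carry out is to observe that every functional $\mu\circ\Theta_\A$ lies in $\StateSpace(\module{P})$ and has a controlled form. Writing $\mu=\varphi\odot\omega$ with $\varphi\in\StateSpace(\A)$ and $\CDN_\A(\omega)\leq 1$, I would use that $(\theta_\A,\Theta_\A)$ is a modular quantum isometry, together with the compactness of the unit ball of $\CDN$ and the lower semicontinuity of $\CDN$, to realize the infimum defining $\CDN_\A(\omega)$: there is $p\in\module{P}$ with $\Theta_\A(p)=\omega$ and $\CDN(p)=\CDN_\A(\omega)\leq 1$. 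Since $(\theta_\A,\Theta_\A)$ is a Hilbert module morphism, $\inner{\Theta_\A(\zeta)}{\Theta_\A(p)}{\module{M}}=\theta_\A\!\left(\inner{\zeta}{p}{\module{P}}\right)$ for all $\zeta\in\module{P}$, whence
\begin{equation*}
  \mu\circ\Theta_\A = (\varphi\circ\theta_\A)\odot p \text{,}
\end{equation*}
which belongs to $\StateSpace(\module{P})$ because $\varphi\circ\theta_\A\in\StateSpace(\D)$ (as $\theta_\A$ is a unital $*$-epimorphism) and $\CDN(p)\leq 1$.

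Next I would invoke the extent. By definition, $\tunnelextent{\tau}$ equals the extent of the basic tunnel $(\D,\Lip_\D,\theta_\A,\theta_\B)$, so that $\Haus{\Kantorovich{\Lip_\D}}\!\left(\{\psi\circ\theta_\B:\psi\in\StateSpace(\B)\},\StateSpace(\D)\right)\leq\tunnelextent{\tau}$. Since $\varphi\circ\theta_\A\in\StateSpace(\D)$ and the set $\{\psi\circ\theta_\B:\psi\in\StateSpace(\B)\}$ is weak* compact, there is $\psi\in\StateSpace(\B)$ with $\Kantorovich{\Lip_\D}(\varphi\circ\theta_\A,\psi\circ\theta_\B)\leq\tunnelextent{\tau}$. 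I would then put $\nu=\psi\odot\Theta_\B(p)$; because $\Theta_\B$ is a modular quantum isometry, $\CDN_\B(\Theta_\B(p))\leq\CDN(p)\leq 1$, so $\nu\in\StateSpace(\module{N})$, and the same Hilbert-module-morphism computation gives $\nu\circ\Theta_\B=(\psi\circ\theta_\B)\odot p$. The point is that both pulled-back functionals are now represented by the \emph{same} element $p\in\module{P}$, differing only through their state factor on $\D$.

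The core estimate, which I expect to be the main obstacle, is to show
\begin{equation*}
  \KantorovichAlt{\CDN}\!\left((\varphi\circ\theta_\A)\odot p,\ (\psi\circ\theta_\B)\odot p\right)\leq H\,\tunnelextent{\tau}\text{,}
\end{equation*}
the delicate issue being to obtain the sharp constant $H=H(1,1)$ rather than a spurious factor of $\sqrt 2$ arising from separating real and imaginary parts. Abbreviating $\varphi'=\varphi\circ\theta_\A$ and $\psi'=\psi\circ\theta_\B$, the supremum defining $\KantorovichAlt{\CDN}$ is over those $\zeta$ with $\CDN(\zeta)\leq 1$ of $\left|(\varphi'-\psi')\!\left(\inner{\zeta}{p}{\module{P}}\right)\right|$. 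For each such $\zeta$ I would pick $\lambda\in\C$ with $|\lambda|=1$ so that $(\varphi'-\psi')\!\left(\lambda\inner{\zeta}{p}{\module{P}}\right)$ is real and equal to $\left|(\varphi'-\psi')\!\left(\inner{\zeta}{p}{\module{P}}\right)\right|$; by $\C$-linearity of $\inner{\cdot}{\cdot}{\module{P}}$ in its first variable, $\lambda\inner{\zeta}{p}{\module{P}}=\inner{\lambda\zeta}{p}{\module{P}}$ with $\CDN(\lambda\zeta)=\CDN(\zeta)\leq 1$. Being real, this value equals $(\varphi'-\psi')\!\left(\Re\inner{\lambda\zeta}{p}{\module{P}}\right)$, whose argument is self-adjoint; the quasi-Leibniz inequality for $\mathds{P}$ gives $\Lip_\D\!\left(\Re\inner{\lambda\zeta}{p}{\module{P}}\right)\leq H(\CDN(\lambda\zeta),\CDN(p))\leq H(1,1)=H$, and the Monge--Kantorovich inequality $(\varphi'-\psi')(b)\leq\Lip_\D(b)\,\Kantorovich{\Lip_\D}(\varphi',\psi')$ with $b=\Re\inner{\lambda\zeta}{p}{\module{P}}$ bounds the quantity by $H\,\Kantorovich{\Lip_\D}(\varphi',\psi')\leq H\,\tunnelextent{\tau}$. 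Taking the supremum over $\zeta$ closes this half.

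Finally, the construction is symmetric in $\mathds{A}$ and $\mathds{B}$: starting instead from $\nu\in\StateSpace(\module{N})$, lifting through $\Theta_\B$, and matching a state of $\StateSpace(\A)$ via the extent, the same three steps produce $\mu\in\StateSpace(\module{M})$ with $\KantorovichAlt{\CDN}(\mu\circ\Theta_\A,\nu\circ\Theta_\B)\leq H\,\tunnelextent{\tau}$. Combining the two one-sided bounds yields the asserted Hausdorff estimate. The genuinely subtle points I anticipate are the attainment of the lift $p$ at norm $\leq 1$ (for which I would rely on the compactness of the $\CDN$-ball, not merely an approximate lift, so as to keep the constant clean) and the rotation-by-$\lambda$ device, which is what prevents the constant $H$ from being doubled.
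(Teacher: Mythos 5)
Your proof is correct and follows essentially the same route as the paper's: lift $\omega$ through $\Theta_\A$ to an element $\xi\in\module{P}$ with $\CDN(\xi)\leq 1$, match $\varphi\circ\theta_\A$ to some $\psi\circ\theta_\B$ using the extent, and bound $\KantorovichAlt{\CDN}$ by combining the quasi-Leibniz inequality for $\mathds{P}$ with the Monge--Kantorovich duality, concluding by symmetry. Your rotation-by-$\lambda$ device and the compactness argument for attaining the lift are refinements of steps the paper's proof passes over silently (it applies $\Lip_\D$ directly to the generally non-self-adjoint element $\inner{\zeta}{\xi}{\module{P}}$), but they do not change the underlying argument.
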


\begin{proof}
  Let $\omega\in\module{M}$ with $\CDN(\omega)\leq 1$ and
  $\varphi \in\StateSpace(\A)$. By definition of the extent of $\tau$,
  there exists $\psi \in \StateSpace(\B)$ such that
  $\Kantorovich{\Lip_\D}(\varphi,\psi) \leq \tunnelextent{\tau}$. Let
  $\xi \in \module{P}$ with $\CDN(\xi)\leq 1$ such that
  $\Theta_\A(\xi) = \omega$ (which exists by Remark (\ref{reached-T-norm-rmk})), and write $\eta = \Theta_\B(\xi)$, so that $\eta\in\targetsettunnel{\tau}{\omega}{1}$.

  We now compute the distance
  $\KantorovichAlt{\CDN}(\varphi\odot\omega,\psi\odot\eta)$. Let
  $\zeta\in\module{P}$ with $\CDN(\zeta)\leq 1$. We note first that:
  \begin{align*}
    \Lip_\A\left(\Re\inner{\omega}{\Theta_\A(\zeta)}{\module{M}}\right)
    &= \Lip_\A\left(\Re\inner{\Theta_\A(\xi)}{\Theta_\A(\zeta)}{\module{M}}  \right) \\
    &= \Lip_\A\circ\theta_\A(\Re\inner{\xi}{\zeta}{\module{M}}) \\
    &\leq \Lip_\D(\Re\inner{\xi}{\zeta}{\module{M}}) \leq H (\CDN(\xi),\CDN(\zeta)) \leq H\text{.}
  \end{align*}
  Similarly $\Lip_\A\left(\Im\inner{\omega}{\Theta_\A(\zeta)}{\module{M}}\right) \leq H(\CDN(\xi),\CDN(\zeta))$, and also
  \begin{equation*}
    \max\left\{\Lip_\B\left(\Re\inner{\eta}{\Theta_\B(\zeta)}{\module{N}}\right), \Lip_\B\left(\Im\inner{\eta}{\Theta_\B(\zeta)}{\module{N}}\right) \right\} \leq H\text.
  \end{equation*}
  Therefore:
  \begin{align*}
    |\varphi\odot\omega(\Theta_\A(\zeta)) - \psi\odot\eta(\Theta_\B(\zeta))|
    &=|\varphi(\inner{\omega}{\Theta_\A(\zeta)}{}) - \psi(\inner{\eta}{\Theta_\B(\zeta)}{})| \\
    &\leq|\varphi(\Re\inner{\omega}{\Theta_\A(\zeta)}{}) - \psi(\Re\inner{\eta}{\Theta_\B(\zeta)}{})| \\
    &\quad + |\varphi(\Im \inner{\omega}{\Theta_\A(\zeta)}{}) - \psi(\Im\inner{\eta}{\Theta_\B(\zeta)}{})| \\
    &\leq 2 H \Kantorovich{\Lip_\D}(\varphi,\psi) \leq 2 H \tunnelextent{\tau} \text{.}
  \end{align*}

  Therefore,
  $\KantorovichAlt{\CDN}(\varphi\odot\omega,\psi\odot\eta) \leq 2 H
  \tunnelextent{\tau}$ as desired. This computation is symmetric in
  $\mathds{A}$ and $\mathds{B}$, so our proposition is now proven.
\end{proof}

We note, in passing, that the Hausdorff distance in Proposition (\ref{reach-prop}) is, in fact, taken between two subsets of pseudo-states.
\begin{remark}\label{Theta-pseudo-state-rmk}
  Let $\mathds{M}_1=(\module{M}_1,\CDN_1,\A,\Lip_\A)$ and $\mathds{M}_2=(\module{M}_2,\CDN_2,\B,\Lip_\B)$ be two {\gQVB s}. Let $(\Pi,\pi)$ be a modular quantum isometry from $\mathds{M}_1$ onto $\mathds{M}_2$.
  Let $\mu = \varphi\odot\omega$ with $\omega\in\module{M}_2$ and $\varphi$ a continuous linear functional over $\B$. Since $\Pi$ is a surjection from $\module{M}_1$ onto $\module{M}_2$, there exists $\eta\in\module{M}_1$ such that $\Pi(\eta) = \omega$. Therefore:
\begin{align*}
  \varphi\odot\omega\circ\Pi
  &= \varphi(\inner{\omega}{\Pi(\cdot)}{\module{M}_2})
  = \varphi(\inner{\Pi(\eta)}{\Pi(\cdot)}{\module{M}_2}) \\
  &= \varphi(\pi(\inner{\eta}{\cdot}{\module{M}_1}))
  = (\varphi\circ\pi)\odot\eta \text,
\end{align*}
noting that, since $\pi$ is a unital *-morphism and thus continuous and linear, $\varphi\circ\pi$ is a continuous linear functional of $\A$. So $\varphi\odot\omega\circ\Pi$ lies in $\FalseDual{\module{M}_1}$. In short, if $\mu \in \FalseDual{\module{M}_2}$ then $\mu\circ\Pi \in \FalseDual{\module{M}_1}$. 

 In addition, we record by Remark (\ref{reached-T-norm-rmk}), that if $\omega\in\dom{\CDN_j}$ then we can choose $\eta$ such that $\CDN(\eta) = \CDN_j(\omega)$. Keeping the notation above, we also note that $\varphi\circ\pi$ is a state of $\A$ if $\varphi$ is a state of $\B$ since $\pi$ is a unital *-morphism. So, if $\mu\in\ModStateSpace(\module{M}_2)$ then $\mu\circ\Pi \in\ModStateSpace(\module{M}_1)$.
\end{remark}

While the expression in our previous proposition seems redundant, it
takes a new importance when trying to define a covariant version of
the dual-modular propinquity, following our ideas from
\cite{Latremoliere18b} by using the expression given in Proposition
(\ref{reach-prop}), known as the reach in \cite{Latremoliere13b}. We
now define covariant tunnels between covariant modular systems. We
emphasize that we do not require any monoid action on the elements of
the tunnels themselves: our covariant tunnels are build by bringing
together modular tunnels and local almost isometries, with one small
additional condition.

\begin{definition}\label{covariant-tunnel-def}
  Let $(F,F_{\mathsf{inner}})$ be a permissible pair.  Let
  \begin{equation*}
    \mathds{M}_j = \CMS{\module{M}_j}{\CDN_j}{\beta_j}{(G_j,\delta_{G_j},q_j)}{\A_j}{\Lip_j}{\alpha_j}{(H_j,\delta_{H_j})}\text{,}
  \end{equation*}
  for each $j\in\{1,2\}$, be a covariant modular $(F,F_{\mathsf{inner}})$-system.

  An \emph{$\varepsilon$-covariant tunnel}
  $\tau =
  (\mathds{P},(\Theta_1,\theta_1),(\Theta_2,\theta_2),\varsigma,\varkappa)$
  from $\mathds{M}_1$ to $\mathds{M}_2$, for some $\varepsilon> 0$, is given by:
  \begin{enumerate}
  \item a {\QVB{F}} $\mathds{P} = (\module{P},\CDN,\D,\Lip_\D)$,
  \item for each $j\in\{1,2\}$, a quantum module isometry
    $\Theta_j : \mathds{P} \twoheadrightarrow \mathds{M}_j$,
  \item a local almost isometry
    $\varsigma = (\varsigma_1,\varsigma_2) \in
    \UIso{\varepsilon}{G_1}{G_2}{\frac{1}{\varepsilon}}$,
  \item a local almost isometry
    $\varkappa = (\varkappa_1,\varkappa_2) \in
    \UIso{\varepsilon}{H_1}{H_2}{\frac{1}{\varepsilon}}$,
  \end{enumerate}
 such that, for all $\{j,k\} = \{1,2\}$, $\varkappa_j\circ q_j = q_k\circ\varsigma_j$ on $G_j\left[\frac{1}{\varepsilon}\right]$.
\end{definition}

The covariant reach of a covariant tunnel is inspired by Proposition
(\ref{reach-prop}), and includes the actions and local almost
isometries. For reference and comparison, we also include the reach of
a covariant tunnel following \cite{Latremoliere18b}.

As our notation involves a lot of data, we will take the liberty to
invoke the notations used in Definition ({\ref{covariant-tunnel-def}})
repeatedly below.

\begin{definition}[{\cite{Latremoliere18b}}]
  We use the notations of Definition (\ref{covariant-tunnel-def}). The
  \emph{$\varepsilon$-covariant reach}
  $\tunnelreach{\tau}{\varepsilon}$ of $\tau$ is:
  \begin{equation*}
    \max_{\{j,k\} = \{1,2\}} \sup_{\mu\in\StateSpace(\A_j)}\inf_{\nu\in\StateSpace(\A_k)} \left[  \sup_{g\in H_j\left[\frac{1}{\varepsilon}\right]} \Kantorovich{\Lip_\D}(\mu\circ\alpha_j^g\circ\theta_j,\nu\circ\alpha_k^{\varkappa_j(g)}\circ\theta_k) \right] \text{.}
  \end{equation*}
\end{definition}

Our new definition now reads:

\begin{definition}\label{covariant-reach-def}
  We use the notations of Definition (\ref{covariant-tunnel-def}). The
  \emph{$\varepsilon$-modular covariant reach}
  $\tunnelmodreach{\tau}{\varepsilon}$ of $\tau$ is:
  \begin{equation*}
    \max_{\{j,k\} = \{1,2\}} \sup_{\mu\in\ModStateSpace(\module{M}_j)}\inf_{\nu\in\ModStateSpace(\module{M}_k)} \left[  \sup_{g\in G_j\left[\frac{1}{\varepsilon}\right]} \KantorovichAlt{\CDN}(\mu\circ\beta_j^g\circ\Theta_j,\nu\circ\beta_k^{\varsigma_j(g)}\circ\Theta_k) \right] \text{.}
  \end{equation*}
\end{definition}

\begin{remark}
We continue using the notation of Definition (\ref{covariant-tunnel-def}). Let $\{j,k\} = \{1,2\}$. If $\mu\in\ModStateSpace(\module{M}_j)$, and $g \in G_j$, and if $\beta_j^g$ is adjoinable, then $\mu=\varphi\odot\omega$ for some $\varphi\in\StateSpace(\A_j)$ and $\omega\in\dom{\CDN_j}$, and therefore,
\begin{equation*}
  \mu\circ\beta_j^g = \varphi(\inner{\omega}{\beta_j^g(\cdot)}{\module{M}_j}) = \varphi(\inner{(\beta_1^g)^\ast\omega}{\cdot}{\module{M}_j}) = \varphi\odot((\beta_j^g)^\ast\omega) \text.
\end{equation*}
If $\omega\in\dom{\CDN_j}$, then $\CDN_j(\beta_j^g\omega)\leq K(g) \CDN_j(\omega)$ for some constant $K(g)$, so $\mu\circ\beta_j^g$ is then as scalar multiple of a pseudo-state of $\module{M}_j$. Therefore, $\mu\circ\beta_j^g\circ\Theta_j$ is a scalar multiple of a pseudo-state of $\module{P}$.
\end{remark}

We now follow the pattern identified in \cite{Latremoliere18b} and
synthesize our various numerical quantities attached to a covariant
modular tunnel into a single number:

\begin{definition}\label{magnitude-def}
  We use the notations of Definition (\ref{covariant-tunnel-def}). The
  $\varepsilon$-modular magnitude of $\tau$ is:
  \begin{equation*}
    \tunnelmodmagnitude{\tau}{\varepsilon} = \max\left\{ \tunnelextent{\tau}, \tunnelreach{\tau}{\varepsilon}, \tunnelmodreach{\tau}{\varepsilon} \right\} \text{.}
  \end{equation*}
\end{definition}

\begin{remark}
  To any modular covariant tunnel corresponds a covariant tunnel
  between the underlying Lipschitz dynamical systems formed by the
  base spaces, and the modular magnitude dominates the magnitude of
  this tunnel. Using the notations of Definition
  (\ref{covariant-tunnel-def}), this covariant tunnel is simply
  $\tau_\flat(\D,\Lip_\D,\theta_1,\theta_2,\varkappa)$, and by
  construction,
  $\tunnelmagnitude{\tau_\flat}{\varepsilon} \leq
  \tunnelmodmagnitude{\tau}{\varepsilon}$.
\end{remark}

We verify that, roughly speaking, covariant tunnels can be composed, which is the reason why, ultimately, our covariant modular metric will satisfy the
triangle inequality. The proof follows the idea of \cite{Latremoliere14}.

\begin{theorem}\label{triangle-thm}
  Let $(F,F_{\mathsf{inner}})$ be a permissible pair. Let
  $\varepsilon_1, \varepsilon_2 \in
  \left(0,\frac{\sqrt{2}}{2}\right)$. Let $\mathds{M}_1$,
  $\mathds{M}_2$ and $\mathds{M}_3$ be three covariant modular
  $(F,F_{\mathsf{inner}})$-systems. Let $\tau_1$ be $\varepsilon_1$-covariant tunnel
  from $\mathds{M}_1$ to $\mathds{M}_2$ and let $\tau_2$ be a
  $\varepsilon_2$-covariant tunnel from $\mathds{M}_2$ to
  $\mathds{M}_3$.

  For all $\varepsilon > 0$, there exists a
  $(\varepsilon_1 + \varepsilon_2)$-covariant $(F,F_{\mathsf{inner}})$-tunnel $\tau$
  from $\mathds{M}_1$ to $\mathds{M}_3$ with:
  \begin{equation*}
    \tunnelmodmagnitude{\tau}{\varepsilon_1 + \varepsilon_2} \leq \tunnelmodmagnitude{\tau_1}{\varepsilon_1}+\tunnelmodmagnitude{\tau_2}{\varepsilon_2} + \varepsilon \text{.}
  \end{equation*}
\end{theorem}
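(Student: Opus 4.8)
The plan is to build the composite tunnel as a fibered product over the middle system $\mathds{M}_2$ and then bound the three quantities entering the modular magnitude separately. Write $\tau_1 = (\mathds{P}_1,(\theta_1,\Theta_1),(\theta_2',\Theta_2'),\varsigma^{(1)},\varkappa^{(1)})$ with $\mathds{P}_1 = (\module{P}_1,\CDN_1,\D_1,\Lip_{\D_1})$, and likewise $\tau_2 = (\mathds{P}_2,(\theta_2'',\Theta_2''),(\theta_3,\Theta_3),\varsigma^{(2)},\varkappa^{(2)})$ with $\mathds{P}_2 = (\module{P}_2,\CDN_2,\D_2,\Lip_{\D_2})$. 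At the base level I would form $\D \subseteq \D_1 \oplus \D_2$ as the pairs $(d_1,d_2)$ with $\theta_2'(d_1) = \theta_2''(d_2)$, and at the module level $\module{P} \subseteq \module{P}_1 \oplus \module{P}_2$ as the pairs $(\omega_1,\omega_2)$ with $\Theta_2'(\omega_1) = \Theta_2''(\omega_2)$, equipped with the D-norm obtained (after gluing the two L-seminorms along the common quotient $\A_2$) from the modular-tunnel composition of \cite{Latremoliere18d}, itself built on \cite{Latremoliere14,Latremoliere13b}. This yields a modular tunnel $\tau_\flat = (\mathds{P},(\theta_1\circ\mathrm{pr}_1,\Theta_1\circ\mathrm{pr}_1),(\theta_3\circ\mathrm{pr}_2,\Theta_3\circ\mathrm{pr}_2))$ from $\mathds{M}_1$ to $\mathds{M}_3$ whose extent obeys $\tunnelextent{\tau_\flat} \leq \tunnelextent{\tau_1} + \tunnelextent{\tau_2} + \varepsilon$, the slack $\varepsilon$ arising from the freedom in the gluing parameter. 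This settles the extent term.

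Next I would assemble the covariant data. Theorem (\ref{composition-thm}) composes the local almost isometries: set $\varsigma = \varsigma^{(2)}\circ\varsigma^{(1)} \in \UIso{\varepsilon_1+\varepsilon_2}{G_1}{G_3}{\frac{1}{\varepsilon_1+\varepsilon_2}}$ and $\varkappa = \varkappa^{(2)}\circ\varkappa^{(1)} \in \UIso{\varepsilon_1+\varepsilon_2}{H_1}{H_3}{\frac{1}{\varepsilon_1+\varepsilon_2}}$, so that $\tau$, obtained by adjoining these maps to $\tau_\flat$, is a candidate $(\varepsilon_1+\varepsilon_2)$-covariant tunnel. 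The compatibility condition (item (5) of Definition (\ref{covariant-tunnel-def})) for $\tau$ follows by a triangle inequality in $H_3$: for $g \in G_1[\frac{1}{\varepsilon_1+\varepsilon_2}]$ one estimates $\delta_{H_3}(q_3(\varsigma(g)),\varkappa(q_1(g)))$ by inserting the intermediate point $\varkappa^{(2)}(q_2(\varsigma^{(1)}(g)))$ and invoking the compatibility conditions of $\tau_1$ and $\tau_2$ together with the almost-isometry property of $\varkappa^{(2)}$; the smaller radius $\frac{1}{\varepsilon_1+\varepsilon_2} \leq \min\{\frac{1}{\varepsilon_1},\frac{1}{\varepsilon_2}\}$ guarantees that every element invoked lies in the ball where these estimates hold.

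The heart of the argument is bounding the two reaches. For the base covariant reach $\tunnelreach{\tau}{\varepsilon_1+\varepsilon_2}$ I would follow the chaining used for Lipschitz dynamical systems in \cite{Latremoliere18b}: given $\mu\in\StateSpace(\A_1)$, first use the reach of $\tau_1$ to select $\nu'\in\StateSpace(\A_2)$ approximating $\mu$ twisted through $\varkappa^{(1)}$, then use the reach of $\tau_2$ on $\nu'$ to select $\nu\in\StateSpace(\A_3)$, and combine the two estimates using that $\Lip_\D$ dominates $\Lip_{\D_1}\circ\mathrm{pr}_1$ and $\Lip_{\D_2}\circ\mathrm{pr}_2$, the intermediate actions being matched up to $\varepsilon_1+\varepsilon_2$ by the almost-isometry inequalities. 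The genuinely new component is the modular covariant reach $\tunnelmodreach{\tau}{\varepsilon_1+\varepsilon_2}$, and I would run the identical chaining at the module level, with $\KantorovichAlt{\CDN}$ in place of $\Kantorovich{\Lip_\D}$, the module state spaces $\StateSpace(\module{M}_j)$ in place of $\StateSpace(\A_j)$, and twisting by $\varsigma^{(1)},\varsigma^{(2)}$: starting from $\mu\in\StateSpace(\module{M}_1)$, extract $\nu'\in\StateSpace(\module{M}_2)$ from the modular reach of $\tau_1$ and then $\nu\in\StateSpace(\module{M}_3)$ from that of $\tau_2$, and glue. The main obstacle is precisely this gluing: one must check that a test element $\zeta\in\module{P}$ with $\CDN(\zeta)\leq 1$ projects under $\mathrm{pr}_1$ and $\mathrm{pr}_2$ to admissible test elements for $\KantorovichAlt{\CDN_1}$ and $\KantorovichAlt{\CDN_2}$, which is what lets the composite module Monge-Kantorovich distance split across the middle bundle $\module{M}_2$. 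This is ensured by the composite D-norm being a maximum, so each projection is D-norm contractive, together with the inner-product compatibility $\Theta_2'(\omega_1)=\Theta_2''(\omega_2)$ on $\module{P}$, exactly as in the estimate of Proposition (\ref{reach-prop}). Taking the maximum of the three bounds and letting $\varepsilon$ vary yields the stated bound on the modular magnitude of $\tau$.
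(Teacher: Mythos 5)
Your overall architecture matches the paper's: compose the two modular tunnels into a single modular tunnel whose extent is controlled up to an $\varepsilon$ loss, compose the local almost isometries via Theorem (\ref{composition-thm}), verify condition (5) of Definition (\ref{covariant-tunnel-def}) by a triangle inequality through the middle monoid, and bound the base reach and the modular reach by chaining states through $\mathds{M}_2$. The chaining you outline for $\tunnelmodreach{\tau}{\varepsilon_1+\varepsilon_2}$ is exactly the paper's argument.

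The gap is in the construction of the composite object. The paper --- following \cite{Latremoliere14,Latremoliere18d}, the very references you invoke --- does \emph{not} take the fibered product $\{(d_1,d_2):\theta_2'(d_1)=\theta_2''(d_2)\}$; it takes the \emph{full} direct sums $\D_1\oplus\D_2$ and $\module{P}_1\oplus\module{P}_2$ equipped with
\begin{equation*}
  \Lip(d_1,d_2)=\max\left\{\Lip_{\D_1}(d_1),\;\Lip_{\D_2}(d_2),\;\tfrac{1}{\varepsilon}\norm{\theta_2'(d_1)-\theta_2''(d_2)}{\A_2}\right\}
\end{equation*}
and the analogous D-norm penalizing $\tfrac{1}{\varepsilon}\norm{\Theta_2'(\omega_1)-\Theta_2''(\omega_2)}{\module{M}_2}$. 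Your ``$\varepsilon$ slack arising from the freedom in the gluing parameter'' only has a referent in this penalized construction: an exact pullback has no gluing parameter, so your extent bound $\tunnelextent{\tau_1}+\tunnelextent{\tau_2}+\varepsilon$ is unaccounted for. More seriously, the exact fibered product is not known to be a {\gQVB} over a {\qcms} at all: density of the domain of the max seminorm (and of the D-norm) requires matching Lipschitz, respectively finite-D-norm, lifts \emph{exactly} over $\A_2$, respectively $\module{M}_2$, which the quantum-isometry axioms do not supply --- one would need, for instance, $\ker\theta_2''\cap\dom{\Lip_{\D_2}}$ to be dense in $\ker\theta_2''$ --- and the same exactness issue threatens the claim that the two projections are modular quantum isometries. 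The penalized direct sum sidesteps all of this: test elements need only match up to $\varepsilon$ across the middle, and the middle term in your reach-chaining (which you make vanish via $\Theta_2'(\omega_1)=\Theta_2''(\omega_2)$) is instead bounded by $\varepsilon$ --- which is precisely where the $+\varepsilon$ in the statement comes from. With that substitution your argument goes through and coincides with the paper's.
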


\begin{proof}
  Let $\varepsilon_1,\varepsilon_2 \in \left(0,\frac{\sqrt{2}}{2}\right)$ and let $\varepsilon > 0$. Let
  $\mathds{M}_j =
  \CMS{\module{M}_j}{\CDN_j}{\beta_j}{(G_j,\delta_j,q_j)}{\A_j}{\Lip_j}{\alpha_j}{(H_j,d_j)}$
  for each $j\in\{1,2,3\}$. Let $\tau_1$ be a
  $\varepsilon_1$-covariant tunnel from $\mathds{M}_1$ to $\mathds{M}_2$ with:
  \begin{equation*}
    \tau_1 = \left( \mathds{P}_1, (\Theta_1,\theta_1), (\Theta_2,\theta_2), \varsigma_1, \varkappa_1 \right)
  \end{equation*}
  with $\mathds{P}_1 = (\module{P}_1,\TDN_1,\D_1,\TLip_1)$ and let
  $\tau_2$ be a $\varepsilon_2$-covariant tunnel from $\mathds{M}_2$
  to $\mathds{M}_3$ with:
  \begin{equation*}
    \tau_2 = \left( \mathds{P}_2, (\Pi_1,\pi_1), (\Pi_2,\pi_2), \varsigma_2, \varkappa_2 \right)
  \end{equation*}
  with $\mathds{P}_2 = (\module{P}_2,\TDN_2,\D_2,\TLip_2)$.

  By definition, and to fix our notation:
  \begin{itemize}
  \item $\varsigma_1 = (\varsigma_1^1,\varsigma_1^2) \in \UIso{\varepsilon_1}{G_1}{G_2}{\frac{1}{\varepsilon_1}}$,
  \item $\varsigma_2 = (\varsigma_2^1,\varsigma_2^2) \in \UIso{\varepsilon_2}{G_2}{G_3}{\frac{1}{\varepsilon_2}}$,
  \item $\varkappa_1 = (\varkappa_1^1,\varkappa_1^2) \in \UIso{\varepsilon_1}{H_1}{H_2}{\frac{1}{\varepsilon_1}}$,
  \item $\varkappa_2 = (\varkappa_2^1,\varkappa_2^2) \in \UIso{\varepsilon_2}{H_2}{H_3}{\frac{1}{\varepsilon_2}}$,
  \end{itemize}
  
  Let:
  \begin{equation*}
    \mathds{P} = \left(\module{P}_1\oplus\module{P}_2,\CDN,\D_1\oplus\D_2,\Lip\right)
  \end{equation*}
  where, for all $(d_1,d_2) \in \sa{\D_1\oplus\D_2} = \sa{\D_1}\oplus\sa{\D_2}$, we set
  \begin{equation*}
    \Lip(d_1,d_2) = \max\left\{ \TLip_1(d_1),\TLip_2(d_2),\frac{1}{\varepsilon} \norm{\theta_2(d_1) - \pi_1(d_2)}{\A_2} \right\}
  \end{equation*}
  and, for all
  $(\omega_1,\omega_2) \in \module{P}_1\oplus\module{P}_2$, we
  similarly set
  \begin{equation*}
    \CDN(\omega_1,\omega_2) = \max\left\{ \TDN_1(\omega_1), \TDN_2(\omega_2), \frac{1}{\varepsilon}\norm{\Theta_2(\omega_1) - \Pi_1(\omega_2)}{\module{M}_2}  \}{} \right\}\text{.}
  \end{equation*}
  Let
  $\Xi_1 : (\omega_1,\omega_2)\in\module{P}_1\oplus\module{P}_2
  \mapsto \Theta_1(\omega_1)$, and
  $\Xi_2 : (\omega_1,\omega_2)\in\module{P}_1\oplus\module{P}_2
  \mapsto \Pi_2(\omega_2)$. Similarly, let
  $\xi_1 : (d_1,d_2) \in \D_1\oplus\D_2\mapsto \theta_1(d_1)$ and
  $\xi_2 : (d_1,d_2) \in \D_1\oplus\D_2 \mapsto \pi_2(d_2)$.

  By \cite{Latremoliere18d}, we conclude that
  $(\mathds{P},(\Xi_1,\xi_1),(\Xi_2,\xi_2))$ is a modular
  $(F,F_{\mathsf{inner}})$-tunnel from $(\module{M}_1,\CDN_1,\A_1,\Lip_1)$ to
  $(\module{M}_3,\CDN_3,\A_3,\Lip_3)$ of extent at most
  $\tunnelextent{\tau_1} + \tunnelextent{\tau_2} + \varepsilon$.

  Using Theorem (\ref{composition-thm}), we also have:
  \begin{equation*}
    \varsigma \coloneqq (\varsigma_2^1\circ\varsigma_1^1,\varsigma_1^2\circ\varsigma_2^2) \in \UIso{\varepsilon_1 + \varepsilon_2}{G_1}{G_3}{\frac{1}{\varepsilon_1 + \varepsilon_2}} \text,
  \end{equation*}
  and
  \begin{equation*}
    \varkappa \coloneqq (\varkappa_2^1\circ\varkappa_1^1,\varkappa_1^2\circ\varkappa_2^2) \in \UIso{\varepsilon_1 + \varepsilon_2}{H_1}{H_3}{\frac{1}{\varepsilon_1 + \varepsilon_2}} \text.
  \end{equation*}

  Let now $g \in H_1\left[\frac{1}{\varepsilon_1+\varepsilon_2}\right]$. By Definition (\ref{metrical-tunnel-def}), we have
  \begin{equation*}
    q_3\circ\varsigma_2^1\circ\varsigma_1^1 = \varkappa_2^1\circ q_2 \circ \varsigma_1^1 = \varkappa_2^1\circ\varkappa_1^1\circ q_1 \text.
  \end{equation*}
  A similar computation holds with $G_1$ and $G_3$ roles switched.
  
  Therefore,
  $\tau =
  (\mathds{P},(\Xi_1,\xi_1),(\Xi_2,\xi_2),(\varsigma,\varkappa))$ is
  an $\varepsilon_1+\varepsilon_2$ covariant tunnel. Moreover, by
  \cite[Proposition 3.14]{Latremoliere18b}, we also know that
  $(\D_1\oplus\D_2,\Lip,\xi_1,\xi_2,\varsigma,\varkappa)$ is an
  $(\varepsilon_1+\varepsilon_2)$-covariant tunnel from
  $(\A_1,\Lip_1)$ to $(\A_3,\Lip_3)$ with:
  \begin{align*}
    \tunnelmagnitude{(\D_1\oplus\D_2,\Lip,\xi_1,\xi_2,\varsigma,\varkappa)}{\varepsilon_1+\varepsilon_2}
    &\leq \tunnelmagnitude{(\D_1,\TLip_1,\theta_1,\theta_2)}{\varepsilon_1} \\
    &\quad+ \tunnelmagnitude{(\D_2,\TLip_2,\pi_1,\pi_2)}{\varepsilon_2} + \varepsilon \\
    &\leq \tunnelmodmagnitude{\tau_1}{\varepsilon_1} + \tunnelmodmagnitude{\tau_2}{\varepsilon_2} + \varepsilon \text.
  \end{align*}

  We conclude by computing the $(\varepsilon_1+\varepsilon_2)$-modular
  reach of $\tau$. Let now $\mu \in \ModStateSpace(\module{M}_1)$. By
  definition of the modular reach, there exists
  $\nu \in \ModStateSpace(\module{M}_2)$ such that, for all
  $g \in G\left[\frac{1}{\varepsilon_1}\right]$, we have
  $\KantorovichAlt{\TDN_1}(\mu\circ\beta_1^g\circ\Theta_1,\nu\circ\beta_2^{\varsigma^1_1(g)}\circ\Theta_2)
  \leq \tunnelmodreach{\tau_1}{\varepsilon_1}$. Similarly, there
  exists $\eta\in\ModStateSpace(\module{M}_3)$ such that, if
  $g \in G\left[\frac{1}{\varepsilon_2}\right]$, then
  \begin{equation*}
    \KantorovichAlt{\TDN_2}(\nu\circ\beta_2^{g}\circ\Pi_1,\eta\circ\beta_3^{\varsigma^2_1(g)}\circ\Pi_2)\leq \tunnelmodreach{\tau_2}{\varepsilon_2}\text.
  \end{equation*}

  Now, let
  $\zeta \coloneqq (\zeta_1,\zeta_2) \in
  \module{P}_1\oplus\module{P}_2$ with $\CDN(\zeta)\leq 1$. In
  particular, $\TDN_1(\zeta_1)\leq 1$ and $\TDN_2(\zeta_2)\leq
  1$. Moreover,
  $\norm{\Theta_2(\zeta_1) - \Pi_1(\zeta_2)}{\module{M}_2} <
  \varepsilon$.

  Let $g \in G\left[\frac{1}{\varepsilon_1+\varepsilon_2}\right]$. As
  shown in \cite[Lemma 2.11]{Latremoliere18b}, we have
  $\varepsilon_1 + \frac{1}{\varepsilon_1+\varepsilon_2} \leq
  \frac{1}{\varepsilon_2}$.
  
  Then, in particular, $\varsigma^1_1(g)\in G_2\left[\frac{1}{\varepsilon_1+\varepsilon_2} + \varepsilon_1\right]\subseteq G\left[\frac{1}{\varepsilon_2}\right]$. We thus conclude, writing $\varsigma = (\varsigma_1,\varsigma_2)$, that:
  \begin{align*}
    \Big|\mu&\circ\beta_1^g\circ\Xi_1(\zeta_1,\zeta_2) - \eta\circ\beta_3^{\varsigma_2^1\circ\varsigma_1^1(g)}\circ\Xi_2(\zeta_1,\zeta_2)\Big| \\
            &= \left|\mu\circ\beta_1^g\circ\Theta_1(\zeta_1) - \eta\circ\beta_3^{\varsigma_2^1\circ\varsigma_1^1(g)}\circ\Pi_2(\zeta_2)\right| \\
            &\leq \left|\mu\circ\beta_1^g\circ\Theta_1(\zeta_1) - \nu\circ\beta_2^{\varsigma^1_1(g)}(\Theta_2(\zeta_1))\right| + \left|\nu\circ\beta_2^{\varsigma^1_1(g)}(\Theta_2(\zeta_1)) - \nu\circ\beta_2^{\varsigma^1_1(g)}(\Pi_1(\zeta_2))\right|\\
            &\quad + \left|\nu\circ\beta_2^{\varsigma^1_1(g)}(\Pi_1(\zeta_2)) - \eta\circ\beta_3^{\varsigma^2_1\circ\varsigma^1_1(g)}\circ\Pi_2(\zeta_2)\right| \\ 
            &\leq \KantorovichAlt{\TDN_1}(\mu\circ\beta_1^g\circ\Theta_1,\nu\circ\beta_2^{\varsigma^1_1(g)}\circ\Theta_2) + \norm{\Theta_2(\zeta_1) - \Pi_1(\zeta_2)}{\module{M}_2} \\
            &\quad + \KantorovichAlt{\TDN_2}(\nu\circ\beta_2^{\varsigma^1_1(g)}\circ\Pi_1,\eta\circ\beta_3^{\varsigma^2_1(\varsigma^1_1(g))}\circ\Pi_2) \\
            &\leq \tunnelmodreach{\tau_1}{\varepsilon_1} + \varepsilon + \tunnelmodreach{\varepsilon_2}{\varepsilon_2} 
              \leq \tunnelmodmagnitude{\tau_1}{\varepsilon_1} + \tunnelmodmagnitude{\tau_2}{\varepsilon_2} + \varepsilon \text{.}
  \end{align*}

  A similar computation can be done switching the roles of $G_1$ and $G_3$.
  
  Therefore, the $(\varepsilon_1+\varepsilon_2)$-covariant reach of
  $\tau$ is bounded above by $\tunnelmodmagnitude{\tau_1}{\varepsilon_1} +
  \tunnelmodmagnitude{\tau_2}{\varepsilon_2} +
  \varepsilon$. Altogether, by Definition (\ref{magnitude-def}), we
  thus have shown that $\tau$ is a
  $(\varepsilon_1 + \varepsilon_2)$-covariant tunnel with:
  \begin{equation*}
    \tunnelmodmagnitude{\tau}{\varepsilon_1+\varepsilon_2} \leq \tunnelmodmagnitude{\tau_1}{\varepsilon_1}  + \tunnelmodmagnitude{\tau_2}{\varepsilon_2} + \varepsilon
  \end{equation*}
  as desired.
\end{proof}

We now have the tools to define the covariant modular propinquity.

\begin{notation}
  For any permissible pair $(F,F_{\mathsf{inner}})$, for any $\varepsilon > 0$, and any two covariant modular
  systems $\mathds{A}$ and $\mathds{B}$, the set of all
  $\varepsilon$-covariant $(F,F_{\mathsf{inner}})$-tunnels from $\mathds{A}$ to
  $\mathds{B}$ is denoted by:
  \begin{equation*}
    \tunnelset{\mathds{A}}{\mathds{B}}{F,F_{\mathsf{inner}}}{\varepsilon} \text{.}
  \end{equation*}
\end{notation}

\begin{definition}\label{modcovprop-def}
  Fix a permissible pair $(F,F_{\mathsf{inner}})$. The \emph{covariant modular
    propinquity} between any two covariant modular $(F,F_{\mathsf{inner}})$-systems
  $\mathds{A}$ and $\mathds{B}$ is the nonnegative number:
  \begin{multline*}
    \dcovmodpropinquity{F}\left( \mathds{A}, \mathds{B} \right) \\
    = \min\left\{ \frac{\sqrt{2}}{2}, \inf\left\{\varepsilon > 0 :
        \exists\tau \in
        \tunnelset{\mathds{A}}{\mathds{B}}{F,F_{\mathsf{inner}}}{\varepsilon} \quad
        \tunnelmodmagnitude{\tau}{\varepsilon} \leq \varepsilon
      \right\} \right\} \text{.}
  \end{multline*}
\end{definition}

We record that the covariant modular propinquity is indeed a
pseudo-metric:
\begin{proposition}
  For any permissible pair $(F,F_{\mathsf{inner}})$, the covariant modular propinquity is a pseudo-metric on the class of covariant modular
  $(F,F_{\mathsf{inner}})$-systems.
\end{proposition}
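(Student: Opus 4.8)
The plan is to verify the defining properties of a pseudo-metric in turn---non-negativity, reflexivity, symmetry, and the triangle inequality---following the template by which the covariant Gromov-Hausdorff distance $\Upsilon$ on proper monoids was shown in \cite{Latremoliere18b} to be a metric. Non-negativity is immediate, since $\dcovmodpropinquity{F,H}$ is by Definition (\ref{modcovprop-def}) a maximum that includes the constant $\frac{\sqrt{2}}{2}$; reflexivity follows from the trivial tunnel built from identity quantum isometries and identity local almost isometries, whose extent, covariant reach $\tunnelreach{\tau}{\varepsilon}$, and modular covariant reach $\tunnelmodreach{\tau}{\varepsilon}$ all vanish, so that the defining infimum is $0$. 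The substantive content is therefore symmetry and the triangle inequality, the latter resting on the almost-composition Theorem (\ref{triangle-thm}).

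First I would establish symmetry. Given an $\varepsilon$-covariant tunnel $\tau = (\mathds{P},(\theta_1,\Theta_1),(\theta_2,\Theta_2),\varsigma,\varkappa)$ from $\mathds{M}_1$ to $\mathds{M}_2$ as in Definition (\ref{covariant-tunnel-def}), the tuple obtained by interchanging the two modular quantum isometries and replacing $\varsigma=(\varsigma_1,\varsigma_2)$, $\varkappa=(\varkappa_1,\varkappa_2)$ by $(\varsigma_2,\varsigma_1)$, $(\varkappa_2,\varkappa_1)$ is an $\varepsilon$-covariant tunnel from $\mathds{M}_2$ to $\mathds{M}_1$: clauses (1)--(4) of Definition (\ref{covariant-tunnel-def}) are manifestly invariant under this relabelling, and clause (5) is quantified over $\{j,k\}=\{1,2\}$ and hence symmetric. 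Since the extent, $\tunnelreach{\tau}{\varepsilon}$, and $\tunnelmodreach{\tau}{\varepsilon}$ are each a maximum over $\{j,k\}=\{1,2\}$, the modular magnitude $\tunnelmodmagnitude{\tau}{\varepsilon}$ is unchanged, so the two infima defining $\dcovmodpropinquity{F,H}(\mathds{M}_1,\mathds{M}_2)$ and $\dcovmodpropinquity{F,H}(\mathds{M}_2,\mathds{M}_1)$ coincide.

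For the triangle inequality, fix three covariant modular $(F,H)$-systems and set $d_{12}=\dcovmodpropinquity{F,H}(\mathds{M}_1,\mathds{M}_2)$ and $d_{23}=\dcovmodpropinquity{F,H}(\mathds{M}_2,\mathds{M}_3)$. The interesting regime is when the underlying infima both lie below $\frac{\sqrt{2}}{2}$, since otherwise the sum $d_{12}+d_{23}$ already dominates the floor and the inequality is immediate from the presence of $\frac{\sqrt{2}}{2}$ in Definition (\ref{modcovprop-def}). Given $\eta>0$, I would choose $\varepsilon_1,\varepsilon_2\in(0,\frac{\sqrt{2}}{2})$ slightly exceeding these infima together with an $\varepsilon_1$-covariant tunnel $\tau_1$ from $\mathds{M}_1$ to $\mathds{M}_2$ and an $\varepsilon_2$-covariant tunnel $\tau_2$ from $\mathds{M}_2$ to $\mathds{M}_3$ with $\tunnelmodmagnitude{\tau_1}{\varepsilon_1}\leq\varepsilon_1$ and $\tunnelmodmagnitude{\tau_2}{\varepsilon_2}\leq\varepsilon_2$. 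Theorem (\ref{triangle-thm}) then produces an $(\varepsilon_1+\varepsilon_2)$-covariant tunnel $\tau$ from $\mathds{M}_1$ to $\mathds{M}_3$ with $\tunnelmodmagnitude{\tau}{\varepsilon_1+\varepsilon_2}\leq\varepsilon_1+\varepsilon_2+\eta$, the constraint $\varepsilon_1,\varepsilon_2<\frac{\sqrt{2}}{2}$ being exactly what allows the composition of local almost isometries via Theorem (\ref{composition-thm}) inside that construction. Hence the infimum defining $\dcovmodpropinquity{F,H}(\mathds{M}_1,\mathds{M}_3)$ is at most $\varepsilon_1+\varepsilon_2+\eta$; letting $\eta\to 0$, sending $\varepsilon_1,\varepsilon_2$ down to the respective infima, and taking the maximum with $\frac{\sqrt{2}}{2}$, one obtains $\dcovmodpropinquity{F,H}(\mathds{M}_1,\mathds{M}_3)\leq d_{12}+d_{23}$.

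The hard part is the bookkeeping at the threshold $\frac{\sqrt{2}}{2}$: Theorem (\ref{composition-thm}), and hence Theorem (\ref{triangle-thm}), is available only when both parameters lie strictly below $\frac{\sqrt{2}}{2}$, so the triangle inequality cannot be obtained by composition once one of the distances saturates the floor, and it is precisely for those cases that the maximum with $\frac{\sqrt{2}}{2}$ is built into Definition (\ref{modcovprop-def}). The most delicate verification is that the tunnel furnished by Theorem (\ref{triangle-thm}) genuinely satisfies all five clauses of Definition (\ref{covariant-tunnel-def}) after composition, in particular the compatibility clause (5) relating $q_k\circ\varsigma_j$ to $\varkappa_j\circ q_j$, which must be shown to persist under the simultaneous composition of $\varsigma$ and $\varkappa$; this is the one point where the argument genuinely exploits the interplay between the two local almost isometries rather than handling the module and base-space data separately.
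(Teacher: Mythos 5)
Your proposal is correct and follows essentially the same route as the paper: symmetry by swapping the two modular quantum isometries together with the entries of $\varsigma$ and $\varkappa$ (which leaves the $\varepsilon$-magnitude unchanged), and the triangle inequality by composing tunnels via Theorem (\ref{triangle-thm}) exactly as in the covariant propinquity of \cite{Latremoliere18b}. Your additional bookkeeping around the $\frac{\sqrt{2}}{2}$ floor and the explicit reflexivity check are sound elaborations of what the paper leaves implicit.
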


\begin{proof}
  The proof that the covariant modular propinquity satisfies the
  triangle inequality is now identical to \cite{Latremoliere18b} with
  the use of Theorem (\ref{triangle-thm}) (it is helpful to point out that if $\tau$ is an $\varepsilon$-covariant tunnel of magnitude at most $m$, then $\tau$ is also a $(\varepsilon+t)$-covariant tunnel of magnitude at most $m$, for any $t\geq 0$, by definition).
  
  We also note that if $(\mathcal{P},\Theta,\Pi,\varsigma,\varkappa)$ is a
  $\varepsilon$-covariant modular tunnel, then so is
  $(\mathcal{P},\Pi,\Theta,\varkappa,\varsigma)$, and these two
  tunnels have the same $\varepsilon$-magnitude. So the covariant
  modular propinquity is symmetric as well.

  Last, if
  $\mathds{M} =
  \CMS{\module{M}}{\CDN}{\beta}{(G,\delta,q)}{\A}{\Lip}{\alpha}{(H,d)}$
  is an $(F,F_{\mathsf{inner}})$ covariant modular system, then
  \begin{equation*}
    \mathds{M} = \left(\mathds{M},(\mathrm{id}_{\module{M}},\mathrm{id}_\A),(\mathrm{id}_{\module{M}},\mathrm{id}_\A),(\mathrm{id}_G,\mathrm{id}_G),(\mathrm{id}_H,\mathrm{id}_H)\right)
  \end{equation*}
  where $\mathrm{id}_X$ is the identity map of $X$, is a
  $\varepsilon$-covariant modular tunnel of $\varepsilon$-magnitude 0,
  for all $\varepsilon > 0$. Thus
  $\dcovmodpropinquity{F}(\mathds{M},\mathds{M}) = 0$. This
  concludes our proof.
\end{proof}

We now check that we have indeed defined a metric up to the following
notion of equivalence.

\begin{definition}
  For each $j\in\{1,2\}$, let
  \begin{equation*}
    \mathds{M}_j =
    \CMS{\module{M}_j}{\CDN_j}{\beta_j}{(G_j,\delta_j,q_j)}{\A_j}{\Lip_j}{\alpha_j}{(H_j,d_j)}
  \end{equation*}
  be a covariant modular $(F,F_{\mathsf{inner}})$-system.

  A \emph{full equivariant modular quantum isometry}
  $(\Pi,\pi,\varsigma,\varkappa)$ from $\mathds{M}_1$ to
  $\mathds{M}_2$ is given by a full modular quantum isometry
  $(\Pi,\pi)$ from $(\module{M}_1,\CDN_1,\A_1,\Lip_1)$ to
  $(\module{M}_2,\CDN_2,\A_2,\Lip_2)$, an isometric monoid isomorphism
  $\varsigma : G_1 \rightarrow G_2$ and an isometric proper monoid
  isomorphism $\varkappa : H_1 \rightarrow H_2$ such that:
  \begin{enumerate}
  \item $\varkappa\circ q_1 = q_2 \circ \varsigma$,
  \item for all $h \in H_1$, we have
    $\pi\circ\alpha_1^h = \alpha_2^{\varkappa(h)}\circ\pi$,
  \item for all $g \in G_1$, we have
    $\Pi\circ\beta_1^g = \beta_1^{\varsigma(g)}\circ\Pi$.
  \end{enumerate}
\end{definition}

The study of convergence for modules seems to benefit
\cite{Latremoliere16c,Latremoliere18d} from the introduction of the
following distance on modules, which is naturally related to the
distance of Notation (\ref{MongeKantAlt-def}):
\begin{proposition}[{\cite[Definition 3.24, Proposition
    3.25]{Latremoliere16c}}]\label{MongeKantMod-def}
  Let $(\module{M},\CDN,\A,\Lip)$ be a {\gQVB}.  For
  $\omega,\eta \in \module{M}$, if we set:
  \begin{equation*}
    \KantorovichMod{\CDN}(\omega,\eta) = \sup\left\{ \norm{\inner{\omega-\eta}{\zeta}{\module{M}}}{\A}: \zeta\in\module{M}, \CDN(\zeta)\leq 1 \right\} \text{,}
  \end{equation*}
  then $\KantorovichMod{\CDN}$ is a metric on $\module{M}$ which, on
  bounded subsets of $\module{M}$, induces the $\A$-weak topology,
  i.e. the locally convex topology induced on $\module{M}$ by the
  family of seminorms:
  \begin{equation*}
    \forall \zeta \in \module{M} \quad \omega\in\module{M}\longmapsto\norm{\inner{\omega}{\zeta}{\module{M}}}{\A} \text.
  \end{equation*}
  Moreover, if $B\subseteq \module{M}$ is bounded for the D-norm
  $\CDN$, then the topology induced by $\KantorovichMod{\CDN}$ and by
  $\norm{\cdot}{\module{M}}$ on $B$ are equal.
\end{proposition}

We now have two analogues of the {\MongeKant} for {\gQVB s}, and we
will understand their relationship during this section. We first
observe that the metric introduced in Proposition
(\ref{MongeKantMod-def}) over a metrical C*-correspondence is indeed related to the set of pseudo-states of the underlying module.
\begin{proposition}\label{equiv-prop}
  Let $(\module{M},\CDN,\A,\Lip)$ be a {\gQVB}. If
  $\omega,\eta\in\module{M}$ then:
  \begin{equation*}
    \sup\left\{ \left|\mu(\omega-\eta)\right| : \mu \in \ModStateSpace(\module{M}) \right\} \leq \KantorovichMod{\CDN}(\omega,\eta) \leq 2 \sup\left\{ \left|\mu(\omega-\eta)\right| : \mu \in \ModStateSpace(\module{M}) \right\} \text{.}
  \end{equation*}
\end{proposition}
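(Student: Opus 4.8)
The plan is to unwind both sides in terms of the scalar quantities $\varphi(\inner{\omega-\eta}{\zeta}{\module{M}})$, where $\varphi$ ranges over $\StateSpace(\A)$ and $\zeta$ over the $\CDN$-unit ball. Writing $\chi = \omega-\eta$, an element $\mu\in\StateSpace(\module{M})$ is of the form $\mu = \varphi\odot\omega'$ with $\varphi\in\StateSpace(\A)$ and $\CDN(\omega')\leq 1$, and then $\mu(\chi) = \varphi(\inner{\chi}{\omega'}{\module{M}})$; on the other hand $\KantorovichMod{\CDN}(\omega,\eta) = \sup\{\norm{\inner{\chi}{\zeta}{\module{M}}}{\A} : \CDN(\zeta)\leq 1\}$. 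Thus both numbers are built from the same family $\{\inner{\chi}{\zeta}{\module{M}} : \CDN(\zeta)\leq 1\}$ of elements of $\A$, the left-hand side applying a state and taking moduli, the right-hand side taking norms.

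The lower bound is immediate: for $\mu = \varphi\odot\omega'$ with $\CDN(\omega')\leq 1$, since a state is norm-contractive, $|\mu(\chi)| = |\varphi(\inner{\chi}{\omega'}{\module{M}})| \leq \norm{\inner{\chi}{\omega'}{\module{M}}}{\A} \leq \KantorovichMod{\CDN}(\omega,\eta)$, and taking the supremum over $\mu\in\StateSpace(\module{M})$ gives the left inequality.

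For the upper bound, set $T = \sup\{|\mu(\chi)| : \mu\in\StateSpace(\module{M})\}$ and fix $\zeta$ with $\CDN(\zeta)\leq 1$; it suffices to bound $b = \inner{\chi}{\zeta}{\module{M}}$ in norm by $\sqrt2\,T$. Decomposing $b = \Re b + i\,\Im b$ into self-adjoint parts, for every state $\varphi$ we have $\varphi(\Re b) = \Re\bigl((\varphi\odot\zeta)(\chi)\bigr)$, hence $|\varphi(\Re b)| \leq T$; as $\Re b$ is self-adjoint this yields $\norm{\Re b}{\A}\leq T$, and likewise $\norm{\Im b}{\A}\leq T$. The triangle inequality then gives the easy estimate $\norm{b}{\A}\leq 2T$.

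Sharpening $2$ to $\sqrt2$ is the real obstacle, and it cannot be done while testing only against the single element $\zeta$: for a nilpotent $b$ the numerical radius $\sup_\varphi|\varphi(b)|$ equals exactly $\tfrac12\norm{b}{\A}$, so the freedom to vary $\omega'$ over the whole $\CDN$-ball must be exploited. My plan here is to choose a state $\varphi$ nearly realizing $\norm{b}{\A}^2 = \norm{bb^\ast}{\A}$ and to pass to the interior tensor product $H = \module{M}\otimes_\A H_\varphi$ with the GNS space $H_\varphi$, under which each evaluation $\mu(\chi)=\varphi(\inner{\chi}{\omega'}{\module{M}})$ becomes a genuine Hilbert-space inner product $\langle\widehat{\chi},\widehat{\omega'}\rangle_H$ with $\widehat{\omega'}$ in the unit ball of $H$, while $\norm{b}{\A}$ is recovered as the length of the off-diagonal vector produced by the contraction $\langle\zeta|$. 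The remaining geometric content is to show that this off-diagonal length is dominated, up to the factor $\sqrt2$, by the supremum of $|\langle\widehat{\chi},\widehat{\omega'}\rangle_H|$ over admissible diagonal vectors; I expect this Hilbert-space estimate, which balances the self-adjoint and skew-adjoint contributions at a single state rather than summing two separately optimized bounds, to be the crux, and it is precisely the point recorded in \cite{Latremoliere16c}.
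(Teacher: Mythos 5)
Your lower bound and your constant-$2$ estimate are correct, and they follow the same lines as the first half of the paper's argument: since states intertwine adjoints, $\varphi(\Re b)=\Re\varphi(b)$ and $\varphi(\Im b)=\Im\varphi(b)$, so for $b=\inner{\omega-\eta}{\zeta}{\module{M}}$ with $\CDN(\zeta)\leq 1$ both $\norm{\Re b}{\A}$ and $\norm{\Im b}{\A}$ are dominated by $T=\sup\{|\mu(\omega-\eta)| : \mu\in\StateSpace(\module{M})\}$, whence $\norm{b}{\A}\leq 2T$. The genuine gap is that the factor $\sqrt{2}$ in the statement is never established: the proposed repair --- choosing a state nearly attaining $\norm{bb^\ast}{\A}$, passing to the interior tensor product with the GNS space, and invoking a ``Hilbert-space estimate'' that you only expect to hold and attribute to a reference --- is a plan, not an argument. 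As written, your proposal proves the proposition with $2$ in place of $\sqrt{2}$ and stops there.

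That said, your diagnosis of the obstruction is sound, and it bears on the paper's own proof as much as on yours. The paper reaches $\sqrt{2}$ by first asserting $\norm{b}{\A}\leq\left(\norm{\Re b}{\A}^2+\norm{\Im b}{\A}^2\right)^{1/2}$ for the single element $b=\inner{\omega-\eta}{\zeta}{\module{M}}$ and then bounding each summand by $T^2$. That first inequality fails for non-normal $b$: your nilpotent example $b=\left(\begin{smallmatrix}0&1\\0&0\end{smallmatrix}\right)$ has $\norm{b}{}=1$ while $\norm{\Re b}{}=\norm{\Im b}{}=\tfrac{1}{2}$, so the claimed upper bound evaluates to $\tfrac{1}{\sqrt{2}}$; the valid direction is $\norm{b}{\A}\leq\sqrt{2}\left(\norm{\Re b}{\A}^2+\norm{\Im b}{\A}^2\right)^{1/2}$, which again only returns the constant $2$. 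So some additional input --- special structure of the elements $\inner{\omega-\eta}{\zeta}{\module{M}}$, or genuine use of the freedom to vary $\zeta$ over the $\CDN$-unit ball --- is required to reach $\sqrt{2}$, and neither your sketch nor the paper's displayed computation supplies it. You were right to identify this as the crux, but identifying it is not proving it.
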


\begin{proof}
  First, if $\mu = \varphi\odot\zeta\in\ModStateSpace(\module{M})$, with
  $\varphi\in\StateSpace(\A)$ and $\CDN(\zeta)\leq 1$, then we
  compute:
  \begin{equation*}
    \left| \mu(\omega-\eta) \right| = \left|\varphi(\inner{\zeta}{\omega-\eta}{})\right| \leq \norm{\inner{\zeta}{\omega-\eta}{}}{\A} \leq \KantorovichMod{\CDN}(\omega,\eta) \text{.}
  \end{equation*}

  On the other hand, if $b \in \A$, then (noting
  $\varphi(b^\ast) = \overline{\varphi(b)}$):
  \begin{align*}
    \norm{b}{\A}
    &\leq \norm{\Re b}{\A} + \norm{\Im b}{\A} \\
    &\leq \sup_{\varphi\in\StateSpace(\A)} |\varphi(\Re b)| + \sup_{\varphi\in\StateSpace(\A)} |\varphi(\Im b)| \\
    &\leq 2\sup_{\varphi\in\StateSpace(\A)} |\varphi(b)| \text.
  \end{align*}

  Therefore, for all $\zeta\in\dom{\CDN}$ with $\CDN(\zeta)\leq 1$, we
  conclude
  \begin{equation*}
    \norm{\inner{\zeta}{\omega-\eta}{\module{M}}}{\A} \leq 2 \sup\{|\mu(\omega-\eta)|:\mu\in\ModStateSpace(\module{M})\}\text.
  \end{equation*}
  
  Our proposition follows.
\end{proof}

\begin{remark}
  \cite[Proposition 3.20]{Latremoliere18d} has a minor typo, where all
  occurrences of $\sqrt{2}$ should be a $2$.
\end{remark}

We now conclude that our covariant modular propinquity is indeed a
metric, up to a fully equivariant modular quantum isometry.

\begin{theorem}
  Let $(F,F_{\mathsf{inner}})$ be a permissible pair and let $\mathds{A}$ and
  $\mathds{B}$ be two covariant modular $(F,F_{\mathsf{inner}})$-systems. Then $\dcovmodpropinquity{F}(\mathds{A},\mathds{B}) = 0$ if and only if there exists a full equivariant modular quantum isometry from $\mathds{A}$ to $\mathds{B}$.
\end{theorem}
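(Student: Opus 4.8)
The plan is to prove the two implications separately, with the forward direction (distance zero $\Rightarrow$ isometry) carrying essentially all of the difficulty. For the easy direction, suppose $(\pi,\Pi,\varsigma,\varkappa)$ is a full equivariant modular quantum isometry from $\mathds{A}$ to $\mathds{B}$. I would exhibit, for every $\varepsilon>0$, a covariant tunnel of zero magnitude by taking $\mathds{P}$ to be the underlying {\gQVB} of $\mathds{A}$, with $\Theta_1=(\mathrm{id},\mathrm{id})$ and $\Theta_2=(\pi,\Pi)$, and with the local almost isometries supplied by the genuine monoid isomorphisms $\varsigma$ and $\varkappa$ (which are exact, hence $\varepsilon$-local almost isometries for every $\varepsilon$, and which satisfy condition (5) of Definition (\ref{covariant-tunnel-def}) exactly thanks to $\varkappa\circ q_\A=q_\B\circ\varsigma$). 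Since $(\pi,\Pi)$ is a full modular quantum isometry the extent is $0$, and since the equivariance identities $\pi\circ\alpha_\A^h=\alpha_\B^{\varkappa(h)}\circ\pi$ and $\Pi\circ\beta_\A^g=\beta_\B^{\varsigma(g)}\circ\Pi$ hold exactly, both $\tunnelreach{\cdot}{\varepsilon}$ and $\tunnelmodreach{\cdot}{\varepsilon}$ vanish, so $\tunnelmodmagnitude{\cdot}{\varepsilon}=0$ and thus $\dcovmodpropinquity{}(\mathds{A},\mathds{B})=0$.

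For the hard direction, fix a null sequence $(\varepsilon_n)_{n\in\N}$ and covariant tunnels $\tau_n\in\tunnelset{\mathds{A}}{\mathds{B}}{F,J}{\varepsilon_n}$ with $\tunnelmodmagnitude{\tau_n}{\varepsilon_n}\leq\varepsilon_n$, written $\tau_n=(\mathds{P}_n,(\theta_1^n,\Theta_1^n),(\theta_2^n,\Theta_2^n),\varsigma^n,\varkappa^n)$. The first move is to peel off the base and monoid data. Since $\tunnelmagnitude{(\tau_n)_\flat}{\varepsilon_n}\leq\tunnelmodmagnitude{\tau_n}{\varepsilon_n}\to 0$, the base covariant tunnels $(\tau_n)_\flat$ witness that the covariant propinquity between the Lipschitz dynamical systems $(\A_1,\Lip_1,\alpha_1,H_1)$ and $(\A_2,\Lip_2,\alpha_2,H_2)$ is zero; invoking the distance-zero characterization of \cite{Latremoliere18b,Latremoliere18c} furnishes a full quantum isometry $\pi:(\A_1,\Lip_1)\to(\A_2,\Lip_2)$ and a proper monoid isomorphism $\varkappa:H_1\to H_2$ with $\pi\circ\alpha_1^h=\alpha_2^{\varkappa(h)}\circ\pi$. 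Concurrently, the local almost isometries $\varsigma^n\in\UIso{\varepsilon_n}{G_1}{G_2}{1/\varepsilon_n}$ converge, by the covariant Gromov--Hausdorff argument of \cite{Latremoliere18b}, to a proper monoid isomorphism $\varsigma:G_1\to G_2$, and condition (5) of Definition (\ref{covariant-tunnel-def}) passes to the limit to yield $\varkappa\circ q_1=q_2\circ\varsigma$, which is condition (1) of a full equivariant modular quantum isometry.

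The second move produces the module isometry covering $\pi$ and verifies the modular equivariance. Because $\tunnelextent{\tau_n}\to 0$, the underlying modular tunnels realize distance zero for $\dmodpropinquity{}$, so by \cite{Latremoliere18d} the target sets $\targetsettunnel{\tau_n}{\cdot}{\cdot}$ select in the limit a full modular quantum isometry $\Pi:\module{M}_1\to\module{M}_2$ lying above $\pi$. I would make the base and module extractions compatible through a single diagonal subsequence: for each $\omega$ in a countable $\CDN_1$-dense subset of $\dom{\CDN_1}$ and each $a$ in a countable $\Lip_1$-dense subset of $\dom{\Lip_1}$, the target sets have norm-diameter controlled by the extent and so contract to a point, while lower semicontinuity and the quasi-Leibniz bounds let these pointwise limits assemble into genuine morphisms whose base components coincide with $\pi$. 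The identity $\Pi\circ\beta_1^g=\beta_2^{\varsigma(g)}\circ\Pi$ (condition (3)) is where the modular covariant reach enters: the bound $\tunnelmodreach{\tau_n}{\varepsilon_n}\leq\varepsilon_n$ says, through Definition (\ref{covariant-reach-def}) together with the comparison of $\KantorovichAlt{}$ and $\KantorovichMod{}$ from Propositions (\ref{MongeKantMod-def}) and (\ref{equiv-prop}), that $\mu\circ\beta_1^g\circ\Theta_1^n$ and $\nu\circ\beta_2^{\varsigma^n(g)}\circ\Theta_2^n$ are $\varepsilon_n$-close uniformly over $g\in G_1[1/\varepsilon_n]$; transporting this along the target-set selection and sending $n\to\infty$ gives exact intertwining for every $g\in G_1$.

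I expect the main obstacle to be exactly this last step: upgrading the approximate, state-level intertwining recorded by the reach to an exact operatorial identity in the limit. The tension is that the radius $1/\varepsilon_n$ of the ball $G_1[1/\varepsilon_n]$ on which the estimate holds grows while the error $\varepsilon_n$ shrinks, and the maps $\beta_1^g$, $\beta_2^{\varsigma^n(g)}$ and $\varsigma^n$ all vary with $n$; controlling this simultaneously requires the strong continuity of the actions and the local boundedness of $K$ from the definition of a covariant modular system, precisely as in the monoid-equivariant arguments of \cite{Latremoliere18b,Latremoliere18c}. Once the intertwining is secured, assembling $(\pi,\Pi,\varsigma,\varkappa)$ and checking conditions (1)--(3) completes the proof.
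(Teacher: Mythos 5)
Your proposal is correct and follows essentially the same route as the paper: extract a full modular quantum isometry $(\pi,\Pi)$ from the underlying modular tunnels via target sets, extract the monoid isomorphisms $\varsigma,\varkappa$ from the local almost isometries along a common subsequence, pass condition (5) of Definition (\ref{covariant-tunnel-def}) to the limit, and then upgrade the approximate state-level intertwining recorded by the modular covariant reach to the exact identity $\Pi\circ\beta_\A^g=\beta_\B^{\varsigma(g)}\circ\Pi$ using Proposition (\ref{equiv-prop}), strong continuity of the actions, and the local bound $K$. The only cosmetic difference is that you recover $\pi$ and $\varkappa$ first from the base covariant tunnels and then reconcile them with the modular extraction by a diagonal argument, whereas the paper extracts $(\pi,\Pi)$ jointly from the modular tunnels; both handle the compatibility issue correctly.
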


\begin{proof}
  We need some notation. We write:
  \begin{align*}
    \mathds{A} &= \CMS{\module{M}}{\CDN_\A}{\beta_\A}{(G_\A,\delta_\A,q_\A)}{\A}{\Lip_\A}{\alpha_\A}{(H_\A,d_\A)} \\ \intertext{ and }
    \mathds{B} &= \CMS{\module{N}}{\CDN_\B}{\beta_\B}{(G_\B,\delta_\B,q_\B)}{\B}{\Lip_\B}{\alpha_\B}{(H_\B,d_\B)} \text{.}
  \end{align*}

  Let $K_\A : G_\A \rightarrow [0,\infty)$ and
  $K_\B : G_\B \rightarrow [0,\infty)$ be locally bounded functions
  such that for all $g \in G_\A$, we have
  $\CDN_\A\circ\beta_\A^g \leq K_\A(g)\CDN_\A$ and for all $g\in G_\B$
  we have $\CDN_\B\circ\beta_\B^g\leq K_\B(g)\CDN_\B$.

  By Definition (\ref{modcovprop-def}), for all $n\in\N$, there exists
  a $\frac{1}{n+1}$-covariant modular tunnel
  $(\tau_n,\varsigma_n,\varkappa_n)$ from $\mathds{A}$ to $\mathds{B}$
  with
  $\tunnelmodmagnitude{\tau_n}{\frac{1}{n+1}}\leq\frac{1}{n+1}$. We
  recall from Definition (\ref{covariant-tunnel-def}) that $\tau_n$ is
  a modular tunnel, while:
  \begin{align*}
    \varsigma_n = (\varsigma_n^1,\varsigma_n^2) \in \UIso{\frac{1}{n+1}}{G_\A}{G_\B}{n+1} \intertext{ and }\varkappa_n = (\varkappa_n^1,\varkappa_n^2) \in \UIso{\frac{1}{n+1}}{H_\A}{H_\B}{n+1}\text{.}
  \end{align*}

  We also write, for each $n\in\N$, the tunnel $\tau_n$ as
  $(\mathds{M}_n,(\Theta_n,\theta_n),(\Theta'_n,\theta'_n))$, where
  $\mathds{M}_n=(\module{P}_n,\TDN_n,\D_n,\TLip_n)$ is a {\gQVB},
  $(\Theta_n,\theta_n)$ is a modular quantum isometry from
  $\mathds{M}_n$ onto $(\module{M},\CDN_\A,\A,\Lip_\A)$, and
  $(\Theta'_n,\theta'_n)$ is a modular quantum isometry from
  $\mathds{M}_n$ onto $(\module{N},\CDN_\B,\B,\Lip_\B)$.

  By \cite[Theorem 3.23]{Latremoliere18b}, there exists a full modular quantum
  isometry $(\Pi,\pi)$ from $(\module{M},\CDN_\A,\A,\Lip_\A)$ to
  $(\module{N},\CDN_\B,\B,\Lip_\B)$, and a strictly increasing
  function $f : \N \rightarrow \N$, such that:
  \begin{enumerate}
  \item for all $a\in\dom{\Lip_\A}$ and $l\geq\Lip_\A(a)$, the
    sequence
    $\left(\targetsettunnel{\tau_{f(n)}}{a}{l}\right)_{n\in\N}$
    converges to $\{ \pi(a) \}$ for the Hausdorff distance
    $\Haus{\norm{\cdot}{\B}}$,
  \item for all $b\in\dom{\Lip_\B}$ and $l\geq\Lip_\B(b)$, the
    sequence
    $\left(\targetsettunnel{\tau_{f(n)}^{-1}}{b}{l}\right)_{n\in\N}$
    converges to $\{ \pi^{-1}(b) \}$ for the Hausdorff distance
    $\Haus{\norm{\cdot}{\A}}$,
  \item for all $\omega\in\dom{\CDN_\A}$ and $l\geq\CDN_\A(\omega)$,
    the sequence
    $\left(\targetsettunnel{\tau_{f(n)}}{\omega}{l}\right)_{n\in\N}$
    converges to $\{\Pi(\omega)\}$ for the Hausdorff distance
    $\Haus{\KantorovichMod{\CDN_\B}{}}$,
  \item for all $\eta\in\dom{\CDN_\B}$ and $l\geq\CDN_\B(\eta)$, the
    sequence
    $\left(\targetsettunnel{\tau_{f(n)}^{-1}}{\eta}{l}\right)_{n\in\N}$
    converges to $\{\Pi^{-1}(\eta)\}$ for the Hausdorff distance
    $\Haus{\KantorovichMod{\CDN_\A}}$.
  \end{enumerate}
    
  Furthermore, by \cite[Theorem 2.12, Theorem 3.23]{Latremoliere18b} applied to both
  $(\varsigma_n)_{n\in\N}$ and $(\varkappa_n)_{n\in\N}$ (up to
  extracting further subsequences), there exists a strictly increasing
  function $f_2 : \N\rightarrow\N$, an isometric monoid isomorphism
  $\varsigma : G_\A \rightarrow G_\B$ and an isometric monoid
  isomorphism $\varkappa : H_\A \rightarrow H_\B$ such that:
  \begin{itemize}
  \item for all $g \in G_\A$, we have
    $\lim_{n\rightarrow\infty}\varsigma^1_{f(f_2(n))}(g) =
    \varsigma(g)$, and for all $g \in G_\B$ we have
    $\lim_{n\rightarrow\infty}\varsigma^2_{f(f_2(n))}(g) =
    \varsigma^{-1}(g)$,
  \item for all $g \in H_\A$, we have
    $\lim_{n\rightarrow\infty}\varkappa^1_{f(f_2(n))}(g) =
    \varkappa(g)$, and for all $g \in H_\B$ we have
    $\lim_{n\rightarrow\infty}\varkappa^2_{f(f_2(n))}(g) =
    \varkappa^{-1}(g)$.
  \end{itemize}
  Now, the work in \cite[Theorem 3.23]{Latremoliere18b} shows that $\pi$ is, in
  fact, full equivariant, in the sense that for all $g \in H_\A$ we
  have $\pi\circ\alpha_\A^g = \alpha_\B^{\varkappa(g)}\circ\pi$. We
  now prove that the same method can be used here to show that $(\Pi,\pi)$
  is indeed equivariant as well. To ease notation, we rename
  $f\circ f_2$ simply as $f$.
    
  Let $\omega\in\dom{\CDN_\A}$ and $l = \CDN_\A(\omega)$. Let
  $\mu = \varphi \odot \xi \in \ModStateSpace(\module{N})$, where
  $\varphi\in\StateSpace(\B)$ and $\xi\in\dom{\CDN_\B}$ with
  $\CDN_\B(\xi)\leq 1$. Let $g \in G_\B$ and choose $N\in\N$ so that
  $g \in G_\B[N+1]$. To ease our notations, let
  $\varpi = \varsigma^{-1}$, so that
  $(\varsigma_{f(n)}^2(g))_{n\geq N}$ converges to
  $\varpi(g) = \varsigma^{-1}(g) \in G_\A$.
    
  By Definition (\ref{covariant-reach-def}) of the modular reach, for
  each $n\in\N$, $n > N$, there exists 
  $\nu_n = \psi_n\odot\rho_n \in \ModStateSpace(\module{M})$, with
  $\psi_n\in\StateSpace(\A)$ and
  $\rho_n\in\dom{\CDN_\A}$,$\CDN_\A(\rho_n)\leq 1$, such that
  \begin{equation*}
    \forall h \in G_\B[N+1] \quad \KantorovichAlt{\CDN_n}(\nu_n\circ\alpha_\A^{\varsigma_{f(n)}^2(h)}\circ\Theta_n,\mu\circ\alpha_\B^{h}\circ\Theta'_n) \leq \frac{1}{f(n+1)} \leq\frac{1}{n+1}\text.
  \end{equation*}

  Since $(\varsigma^2_{f(n)}(g))_{n\in\N}$ converges to $\varpi(g)$,
  and since $K_\A$ is locally bounded, there exists $K > 0$ and
  $N'\in\N$ such that for all $n\in\N$, $n\geq N'$ we have
  $\CDN_\A\circ\beta_\A^{\varsigma^2_{f(n)}}\leq K \CDN_\A$ and
  $\CDN_\A\circ\beta_\A^{\varpi(g)}\leq K\CDN_\A$.
    
  For each $n\in\N$, $n\geq\max\{N,N'\}$, let:
  \begin{itemize}
  \item $o_n \in \targetsettunnel{\tau_{f(n)}}{\omega}{l}$,
  \item
    $\eta_n \in
    \targetsettunnel{\tau_{f(n)}}{\beta_\A^{\varpi(g)}(\omega)}{K l}$,
  \item
    $\gamma_n \in
    \targetsettunnel{\tau_{f(n)}}{\beta_\A^{\varsigma^2_{f(n)}(g)}(\omega)}{K
      l}$.
  \end{itemize}

  Now:
  \begin{align*}
    \left|\mu\left(\eta_n - \beta_\B^g(o_n)\right)\right|
    &\leq \left|\mu(\eta_n - \gamma_n)\right| + \left|\mu\left(\gamma_n\right) - \nu_n\left(\beta_\A^{\varsigma^2_{f(n)}(g)}(\omega)\right)\right| \\
    &\quad + \left|\nu_n\left(\beta_\A^{\varsigma^2_{f(n)}(g)}(\omega)\right) - \mu(\beta_\B^g(o_n)) \right| \\
    &\leq \KantorovichMod{\CDN_\B}(\eta_n ,\gamma_n) + K l \cdot \KantorovichAlt{\TDN_n}(\mu\circ\Theta_{f(n)},\nu_n\circ\Theta'_{f(n)}) \\
    &\quad+ l \cdot \KantorovichAlt{\TDN_n}(\nu_n\circ\beta_\A^{\varsigma_{f(n)}^2}\circ\Theta_{f(n)},\mu\circ\beta_\A^g\circ\Theta'_{f(n)}) \\
    &\leq \KantorovichMod{\CDN_\B}(\eta_n ,\gamma_n) + \frac{(K+1) l}{n+1}\text{.}
  \end{align*}

  Since $\beta_\A$ is strongly continuous and
  $(\varsigma_{f(n)}^2(g))_{n\in\N}$ converges to $\varpi(g)$, using
  \cite[Proposition 3.20]{Latremoliere18d}:
  \begin{multline*}
    \limsup_{n\rightarrow\infty} \KantorovichMod{\CDN_\B}(\eta_n , \gamma_n) \\ \leq 2\limsup_{n\rightarrow\infty} \left(\KantorovichMod{\CDN_\A}\left(\beta_\A^{\varsigma^2_{f(n)}(g)}(\omega) ,\beta_\A^{\varpi(g)}(\omega)\right) + 2 K F_{\mathsf{mod}}(2l,1) \tunnelextent{\tau_{f(n)}}\right) \\ = 0 \text{.}
  \end{multline*}

  Therefore, by continuity of $\mu$, by continuity of $\beta_\B^g$, and by construction of $\Pi$:
  \begin{equation*}
    \left| \mu(\Pi(\beta_\A^{\varpi(g)}(\omega)) - \beta_\B^{g}\circ\Pi(\omega)) \right| = \lim_{n\rightarrow\infty}\left|\mu\left(\eta_n - \beta_\B^g(o_n)\right)\right| = 0 \text{.}
  \end{equation*}
  Since $\mu\in\ModStateSpace(\module{M})$ is arbitrary, we conclude
  $\KantorovichMod{\CDN_\B}(\Pi(\beta_\A^{\varpi(g)}(\omega)) ,
  \beta_\B^{g}\circ\Pi(\omega)) = 0$, by Proposition
  (\ref{equiv-prop}). Therefore
  $\Pi(\beta_\A^{\varpi(g)}(\omega)) = \beta_\B^{g}\circ\Pi(\omega)$,
  as desired.

  By continuity, since $\dom{\CDN_\A}$ is norm dense in $\module{M}$,
  we conclude that $\Pi\circ\beta_\A^{\varpi(g)} = \beta_\B^g\circ\Pi$
  for all $g\in G_\B$, which is of course equivalent to
  $\Pi\circ\beta_\A^g = \beta_\B^{\varsigma(g)}\circ\Pi$ for all
  $g\in G_\A$.

  Last, by Definition (\ref{covariant-tunnel-def}), we note that $q_\B\circ\varsigma = \varkappa\circ q_\A$, since $q_\A$ and $q_\B$ are continuous.
  Similarly, $q_\A\circ\varsigma^{-1} = \varkappa^{-1}\circ q_\B$.  This
  concludes the proof of our theorem.
\end{proof}

Our object for this section is technically to define a covariant
\emph{metrical} propinquity. However, this is now simple, except possibly for notational issues.
\begin{definition}
  Let $(F,F_{\mathsf{inner}},F_{\mathsf{mod}})$ be a permissible triple. A \emph{covariant metrical
    $(F,F_{\mathsf{inner}},F_{\mathsf{mod}})$-system} is given as a pair $(\mathds{M}, (\A,\Lip_\A))$  of a covariant modular $(F,Q$)-system
  $\mathds{M} = \CMS{\module{M}}{\CDN}{\beta}{(G,\delta_G,q)}{\B}{\Lip_\B}{\alpha}{(H,\delta_H) }$ and a {\qcms} $(\A,\Lip_\A)$ such that in particular,
  $(\module{M},\CDN,\A,\Lip_\A,\B,\Lip_\B)$ is a {\MVB{F}}.
\end{definition}

Note that we do not require any action on $(\A,\Lip_\A)$. To avoid
drowning in notations, we will not discuss the now easy construction
of a metric where an independent action on $(\A,\Lip_\A)$ is accounted
for: all that is needed will be to replace tunnels by covariant
tunnels in the obvious locations. We work here when no such action is
present.

\begin{definition}
  Let $(F,F_{\mathsf{inner}},F_{\mathsf{mod}})$ be a permissible triple. For each $j\in\{1,2\}$, let
  $(\mathds{M}_j,(\A_j,\Lip_j))$ be a covariant metrical
  $(F,F_{\mathsf{inner}},F_{\mathsf{mod}})$-systems, with
  \begin{equation*}
    \mathds{M}_j = \CMS{\module{M}_j}{\CDN_j}{\beta_j}{(G_j,\delta_{G_j},q_j)}{\B_j}{\Lip_{\B_j}}{\alpha_j}{(H_j,\delta_{H_j})}\text.
  \end{equation*}

  A \emph{$\varepsilon$-covariant metrical $(F,F_{\mathsf{inner}},F_{\mathsf{mod}})$-tunnel} $(\tau,\varsigma,\varkappa)$, for $\varepsilon > 0$, is given by a
  metrical $(F,F_{\mathsf{inner}},F_{\mathsf{mod}})$ tunnel $\tau$, and two local almost isometries
  $\varsigma\in\UIso{\varepsilon}{G_1}{G_2}{\frac{1}{\varepsilon}}$
  and
  $\varkappa\in\UIso{\varepsilon}{H_1}{H_2}{\frac{1}{\varepsilon}}$,
  such that
  \begin{equation*}
    \forall \{j,k\}=\{1,2\} \quad q_k\circ\varsigma_j = \varkappa_j\circ q_j \text.
  \end{equation*}
\end{definition}

The magnitude of a metrical tunnel is easily defined:
\begin{definition}
  If $\tau$ is a $\varepsilon$-covariant metrical tunnel, then the
  \emph{$\varepsilon$-metrical magnitude of $\tau$} is
  $\tunnelmodmagnitude{\tau}{\varepsilon} =
  \max\left\{\tunnelmodmagnitude{\tau_{\mathsf{mod}}}{\varepsilon},
    \tunnelextent{\tau_{\mathsf{base}}} \right\}$, where we used
  Notation (\ref{metrical-tunnel-notation}).
\end{definition}

The covariant metric propinquity is defined similarly to the other
versions of the covariant propinquity:

\begin{notation}
  For any permissible triple $(F,F_{\mathsf{inner}},F_{\mathsf{mod}})$, and any two covariant metrical $(F,F_{\mathsf{inner}},F_{\mathsf{mod}})$-systems $\mathds{A}$ and $\mathds{B}$, the set of all $\varepsilon$-covariant $(F,F_{\mathsf{inner}},F_{\mathsf{mod}})$-tunnels from $\mathds{A}$ to $\mathds{B}$ is denoted by:
  \begin{equation*}
    \tunnelset{\mathds{A}}{\mathds{B}}{F,F_{\mathsf{inner}},F_{\mathsf{mod}}}{\varepsilon} \text{.}
  \end{equation*}
\end{notation}

\begin{definition}
  Let $(F,F_{\mathsf{inner}},F_{\mathsf{mod}})$ be a permissible triple. The \emph{covariant metrical $(F,F_{\mathsf{inner}},F_{\mathsf{mod}})$-propinquity} between two covariant metrical $(F,F_{\mathsf{inner}},F_{\mathsf{mod}})$-systems $\mathds{A}$ and $\mathds{B}$ is:
  \begin{multline*}
    \dcovmetpropinquity{F}\left( \mathds{A}, \mathds{B} \right) = \\
    \min\left\{ \frac{\sqrt{2}}{2}, \inf\left\{\varepsilon > 0 :
        \exists\tau \in
        \tunnelset{\mathds{A}}{\mathds{B}}{F,F_{\mathsf{inner}},F_{\mathsf{mod}}}{\varepsilon} \quad
        \tunnelmodmagnitude{\tau}{\varepsilon} \leq \varepsilon
      \right\} \right\} \text{.}
  \end{multline*}
\end{definition}

Putting all our efforts together, we obtain:
\begin{definition}
  Let $(\mathds{A},\A,\Lip_\A)$ and $(\mathds{B},\B,\Lip_\B)$ be two
  covariant metrical systems. A \emph{full equivariant metrical
    quantum isometry} $(\Theta,\theta,\pi)$ is given by a full
  equivariant modular quantum isometry $(\Theta,\theta)$ from
  $\mathds{A}$ to $\mathds{B}$ and a full quantum isometry
  $\pi : (\A,\Lip_\A)\rightarrow(\B,\Lip_\B)$ such that $(\Theta,\pi)$
  is also a module map.
\end{definition}

\begin{theorem}\label{metrical-main-thm}
  Let $(F,F_{\mathsf{inner}},F_{\mathsf{mod}})$ be a permissible triple. The covariant metrical $(F,F_{\mathsf{inner}},F_{\mathsf{mod}})$-propinquity is a metric up to full equivariant metrical quantum isometry on the class of all covariant metrical
  $(F,F_{\mathsf{inner}},F_{\mathsf{mod}})$-systems.
\end{theorem}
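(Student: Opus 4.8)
The plan is to verify that $\dcovmetpropinquity{F,J,T}$ is a pseudo-metric on the class of covariant metrical $(F,J,T)$-systems and that it vanishes exactly on pairs related by a full equivariant metrical quantum isometry. Nonnegativity is immediate from the definition, and symmetry follows from the symmetry of each component of a covariant metrical tunnel: given an $\varepsilon$-covariant metrical tunnel $(\tau,\tau')$, one reverses the two modular quantum isometries of the modular part $\tau$ while swapping the entries of $\varsigma$ and $\varkappa$ (exactly as in the symmetry argument for $\dcovmodpropinquity{}$), and one reverses the two quantum isometries of the base tunnel $\tau'$; the resulting pair is a covariant metrical tunnel in the opposite direction with the same $\varepsilon$-metrical magnitude. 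Thus the two substantive tasks are the triangle inequality and the characterization of distance zero.

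For the triangle inequality I would fix three covariant metrical systems $\mathds{A},\mathds{B},\mathds{C}$ together with an $\varepsilon_1$-covariant metrical tunnel $(\tau_1,\tau_1')$ from $\mathds{A}$ to $\mathds{B}$ and an $\varepsilon_2$-covariant metrical tunnel $(\tau_2,\tau_2')$ from $\mathds{B}$ to $\mathds{C}$, and compose each of the two components. The modular parts $\tau_1,\tau_2$ compose, by Theorem (\ref{triangle-thm}), into an $(\varepsilon_1+\varepsilon_2)$-covariant modular tunnel of modular magnitude at most $\tunnelmodmagnitude{\tau_1}{\varepsilon_1}+\tunnelmodmagnitude{\tau_2}{\varepsilon_2}+\varepsilon$; the base tunnels $\tau_1',\tau_2'$ compose, by the standard tunnel composition for the dual propinquity \cite{Latremoliere13b,Latremoliere18d}, into a tunnel of extent at most $\tunnelextent{\tau_1'}+\tunnelextent{\tau_2'}+\varepsilon$. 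The delicate point is that the composed pair must again be a \emph{metrical} tunnel: the module dowel $\module{P}^1\oplus\module{P}^2$ built by Theorem (\ref{triangle-thm}) must be a left module over the composed base-algebra dowel $\D_1'\oplus\D_2'$ coming from $\tau_1',\tau_2'$, and the two composed maps must be module maps in the sense of the defining condition of a covariant metrical tunnel. I would endow $\module{P}^1\oplus\module{P}^2$ with the diagonal $\D_1'\oplus\D_2'$-action and check that, because both gluings are carried out over the same intermediate system $\mathds{B}$, these module-map conditions are inherited from those of $(\tau_1,\tau_1')$ and $(\tau_2,\tau_2')$. Taking the maximum of the two magnitude bounds, and then letting $\varepsilon\to 0$ as in the definition of the covariant metrical propinquity, yields the triangle inequality.

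For the coincidence property, one implication is constructive: from a full equivariant metrical quantum isometry $(\theta,\Theta,\pi)$ I would build, for each $\varepsilon>0$, the trivial metrical tunnel whose modular part is the tunnel attached to the full equivariant modular quantum isometry $(\theta,\Theta,\varsigma,\varkappa)$ and whose base part is the trivial tunnel attached to the full quantum isometry $\pi$; its $\varepsilon$-metrical magnitude is arbitrarily small, so the covariant metrical propinquity between the two systems is zero. For the converse I would take a sequence $(\tau_n,\tau_n')$ of $\frac{1}{n+1}$-covariant metrical tunnels of magnitude tending to $0$. The modular parts $\tau_n$ produce, by the previous theorem together with \cite{Latremoliere18b,Latremoliere18d}, a full equivariant modular quantum isometry $(\theta,\Theta,\varsigma,\varkappa)$ as the limit of the target sets along a subsequence, while the base parts $\tau_n'$ produce, since $\dpropinquity{}$ is a metric \cite{Latremoliere13b}, a full quantum isometry $\pi:(\A,\Lip_\A)\rightarrow(\B,\Lip_\B)$ as the limit of the target sets $\targetsettunnel{\tau_n'}{\cdot}{\cdot}$ along a further subsequence. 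After passing to a common subsequence by a diagonal argument, the one remaining point is that $(\pi,\Theta)$ is a module map; I would obtain this by passing the module-map identity satisfied inside each $(\tau_n,\tau_n')$ to the limit, using the convergence of target sets in $\Haus{\norm{\cdot}{\B}}$ and in $\Haus{\KantorovichMod{\CDN_\B}}$.

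The step I expect to be the main obstacle is the compatibility verification inside the triangle inequality: the module dowel and the base-algebra dowel are produced by two independent gluing constructions, and one must make them cohere so that the diagonal module structure together with the two module-map conditions hold simultaneously, certifying that the composite is a genuine covariant metrical tunnel. Once this is in place, the rest reduces to invoking the metric properties already established for $\dcovmodpropinquity{}$ and $\dpropinquity{}$ and to routine subsequence extractions.
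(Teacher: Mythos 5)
Your proposal is correct and follows essentially the same route as the paper: the triangle inequality is obtained by composing the modular components via Theorem (\ref{triangle-thm}) and the base tunnels via the standard tunnel composition, with the compatibility of the two gluings (the module structure on the composed dowel and the module-map conditions) handled exactly as you describe, and distance zero is deduced from the corresponding coincidence results for the covariant modular propinquity and the propinquity, with the module-map property of $(\pi,\Theta)$ recovered in the limit. The paper delegates both delicate verifications to the arguments of the earlier work on the (non-covariant) metrical propinquity, which is precisely the content you spell out.
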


\begin{proof}
  The proof follows from the similar proof for the covariant modular
  propinquity, with the addition of the proofs in
  \cite{Latremoliere18d} about the metrical propinquity. We will use
  Notation (\ref{metrical-tunnel-notation}).

  For instance, given $\tau$ a $\varepsilon_1$-covariant metrical
  $(F,F_{\mathsf{inner}},F_{\mathsf{mod}})$-tunnel from $(\mathds{A}_1,\A_1,\Lip_1)$ to
  $(\mathds{A}_2,\A_2,\Lip_2)$ and $\gamma = (\gamma_1,\gamma_2)$ a
  $\varepsilon_2$-covariant metrical $(F,F_{\mathsf{inner}},F_{\mathsf{mod}})$-tunnel from
  $(\mathds{A}_2,\A_2,\Lip_2)$ to $(\mathds{A}_3,\A_3,\Lip_3)$, then
  as long as $\varepsilon_1,\varepsilon_2 \leq \frac{\sqrt{2}}{2}$,
  Theorem (\ref{triangle-thm}) applies to $\tau_{\mathsf{mod}}$ and
  $\gamma_{\mathsf{mod}}$ to produce, for any $\varepsilon > 0$, a
  $\varepsilon_1 + \varepsilon_2$-covariant modular $(F,F_{\mathsf{inner}})$-tunnel
  $\tau_{\mathsf{mod}}\circ\gamma_{\mathsf{mod}}$ from $\mathds{A}_1$
  to $\mathds{A}_3$, whose magnitude is no more than
  $\tunnelmodmagnitude{\tau_{\mathsf{mod}}}{\varepsilon_1} +
  \tunnelmodmagnitude{\tau_{\mathsf{mod}}}{\varepsilon_2} +
  \varepsilon$, while \cite[Theorem 3.1]{Latremoliere14} shows how to similarly
  construct a tunnel $\tau_{\mathsf{base}}\circ\gamma_{\mathsf{base}}$
  from $(\A_1,\Lip_1)$ to $(\A_3,\Lip_3)$ with
  $\tunnelextent{\tau_{\mathsf{base}}\circ\gamma_{\mathsf{base}}} \leq
  \tunnelextent{\tau_{\mathsf{base}}} +
  \tunnelextent{\gamma_{\mathsf{base}}} + \varepsilon$. The same
  argument as \cite[Proposition 4.4]{Latremoliere18d} then shows that
  the pair
  $\tau'=(\tau_{\mathsf{mod}}\circ\gamma_{\mathsf{mod}},\tau_{\mathsf{base}}\circ\gamma_{\mathsf{base}})$
  defines a $(\varepsilon_1+\varepsilon_2)$-covariant
  $(F,F_{\mathsf{inner}},F_{\mathsf{mod}})$-metrical tunnel (using Notation
  (\ref{metrical-tunnel-notation})) from $(\mathds{A}_1,\A_1,\Lip_1)$
  to $(\mathds{A}_3,\A_3,\Lip_3)$, with
  $(\varepsilon_1+\varepsilon_2)$-magnitude at most
  $\tunnelmodmagnitude{\tau}{\varepsilon_1} +
  \tunnelmodmagnitude{\gamma}{\varepsilon_2} + \varepsilon$ --- and therefore,
  \begin{equation*}
    \tunnelmodmagnitude{\tau'}{\varepsilon_1+\varepsilon_2+\varepsilon} \leq \tunnelmodmagnitude{\tau'}{\varepsilon_1+\varepsilon_1} \leq \tunnelmodmagnitude{\tau}{\varepsilon_1} + \tunnelmodmagnitude{\gamma}{\varepsilon_2} + \varepsilon  \text.
  \end{equation*}
  This then
  can be used to show that the covariant metrical propinquity
  satisfies the triangle inequality as in \cite{Latremoliere18b}.

  Similarly, if
  $\dcovmetpropinquity{F}((\mathds{A},\A,\Lip_\A),
  (\mathds{B},\B,\Lip_\B)) = 0$, then in particular,
  \begin{equation*}
    \dcovmodpropinquity{F}(\mathds{A},\mathds{B}) = 0
  \end{equation*}
  and thus there exists a full equivariant modular quantum isometry
  $(\Theta,\theta) : \mathds{A} \rightarrow \mathds{B}$; while
  $\dpropinquity{F}((\A,\Lip_\A),(\B,\Lip_\B)) = 0$ and thus there
  exists a quantum isometry
  $\pi : (\A,\Lip_\A) \rightarrow (\B,\Lip_\B)$. By the same argument
  as \cite[Theorem 4.9]{Latremoliere18d}, we conclude that
  $(\Theta,\pi)$ is indeed a module morphism (note: the covariant
  metric propinquity dominates the metrical propinquity applied to the
  metrical quantum bundles obtained from forgetting the group actions,
  so \cite{Latremoliere18d} applies to give the metrical isomorphism
  directly).
\end{proof}

\section{The Gromov-Hausdorff Propinquity for Metric Spectral Triples}

Let $(\A,\Hilbert,D)$ be a metric spectral triple. To the canonical
metrical C*-corres\-pondence
$\mvb{\A}{\Hilbert}{D} = (\Hilbert,\CDN,\A,\Lip_D,\C,0)$, we also can
associated a canonical action of $\R$ by unitaries on $\Hilbert$,
setting $U : t \in \R\mapsto \exp(itD)$. Note that for all $t\in \R$,
since $U^t$ is unitary and since it commutes with $D$, we have
$\CDN(U^t \xi) = \CDN(\xi)$ for all $\xi \in \Hilbert$.  Now, in order
to also keep a record of the positive orientation of time flowing, we
actually consider the \emph{restriction} of the action $U$ to the
proper monoid $[0,\infty)$, with addition. We thus define:
\begin{definition}
  If $(\A,\Hilbert,D)$ is a metric spectral triple, the associated
  covariant modular system $\umvb{\A}{\Hilbert}{D}$ is defined as
  $(\mathds{D},\A,\Lip_D)$ where:
  \begin{equation*}
    \mathds{D} = \CMS{\Hilbert}{\CDN}{U}{([0,\infty),d,q)}{\C}{0}{\mathrm{id}}{(\{0\},d)}
  \end{equation*}
  with:
  \begin{equation*}
    U : t \in [0,\infty) \mapsto U_t = \exp( i t D) 
  \end{equation*}
  and
  \begin{equation*}
    (\Hilbert,\CDN,\A,\Lip_D,\C,0) = \mvb{\A}{\Hilbert}{D}\text,
  \end{equation*}
  while $\mathrm{id}$ is the identity map (seen here as an action of
  the trivial group $\{0\}$), $d$ is the usual distance induced by the
  usual metric of $\R$, and $q : t \in [0,\infty)\mapsto 0$.
\end{definition}

We thus can apply the covariant version of our metrical propinquity to
metric spectral triples.

\begin{definition}
  Let $(F,F_{\mathsf{inner}},F_{\mathsf{mod}})$ be a permissible triple. The \emph{spectral
    $(F,F_{\mathsf{inner}},F_{\mathsf{mod}})$-propinquity} between two metric spectral triples
  $(\A,\Hilbert_\A,D_\A)$ and $(\B,\Hilbert_\B,D_\B)$ is:
  \begin{multline*}
    \spectralpropinquity{F}((\A,\Hilbert_\A,D_\A),(\B,\Hilbert_\B,\D_\B))
    \\ =
    \dcovmetpropinquity{F}(\umvb{\A}{\Hilbert_\A}{D_\A},\umvb{\B}{\Hilbert_\B}{D_\B})
    \text{.}
  \end{multline*}
\end{definition}

\begin{remark}
  We should explain why a permissible triple parametrizes the spectral
  propinquity, since, in general, metric spectral triples give rise to
  Leibniz metrical C*-correspondences. The reason is that we allow for
  the covariant, metrical tunnels to be more general than imposing on
  them the usual Leibniz conditions. In particular, the covariant
  metrical tunnels involved in the computation of the spectral
  propinquity between spectral triples are not expected to arise from
  spectral triples. As per our usual convention, if we work only with
  Leibniz covariant tunnels, then we simply write
  $\spectralpropinquity{F}$ for the spectral propinquity.
\end{remark}

The main result of this work is:

\begin{theorem}\label{main-thm}
  Let $(F,F_{\mathsf{inner}},F_{\mathsf{mod}})$ be a permissible triple. The spectral propinquity
  $\spectralpropinquity{F}$ is a metric on the class of metric
  spectral triples up to equivalence of spectral triples.
\end{theorem}

\begin{proof}
  As the covariant metrical propinquity is indeed a pseudo-metric, so
  is the spectral propinquity. It is thus enough to study the distance
  zero question.

  Let $(\A,\Hilbert_\A,D_\A)$ and $(\B,\Hilbert_\B,\D_\B)$ be two
  metric spectral triples with:
  \begin{equation*}
    \spectralpropinquity{F}((\A,\Hilbert_\A,D_\A),(\B,\Hilbert_\B,D_\B)) = 0\text{,}
  \end{equation*}
  and write $U_\A : t\in[0,\infty)\mapsto \exp(itD_\A)$ and
  $U_\B : t\in[0,\infty) \mapsto \exp(it D_\B)$. We also write
  $\CDN_\A : \xi\in\dom{D_\A} \mapsto \norm{\xi}{\Hilbert_\A} +
  \norm{D_\A \xi}{\Hilbert_\A}$ and
  $\CDN_\B : \xi\in\dom{D_\B} \mapsto \norm{\xi}{\Hilbert_\B} +
  \norm{D_\B \xi}{\Hilbert_\B}$. Last, we write
  $\Lip_\A : a\in\dom{\Lip_\A} \mapsto
  \opnorm{[D_\A,a]}{}{\Hilbert_\A}$ and
  $\Lip_\B : a\in\dom{\Lip_\B} \mapsto
  \opnorm{[D_\B,b]}{}{\Hilbert_\B}$.

  By Theorem (\ref{metrical-main-thm}), there exists a C*-correspondence morphism $(\Theta,\theta,\pi)$, and an isometric isomorphism
  $\varsigma : [0,\infty)\rightarrow[0,\infty)$ such that
  $(\Theta,\theta,\varsigma,0)$ is a full equivariant
  modular quantum isometry from
  \begin{equation*}
    \CMS{\Hilbert_\A}{\CDN_\A}{U_\A}{[0,\infty)}{\C}{0}{\mathrm{id}}{\{0\}}
  \end{equation*}
  to
  \begin{equation*}
    \CMS{\Hilbert_\B}{\CDN_\B}{U_\B}{[0,\infty)}{\C}{0}{\mathrm{id}}{\{0\}}\text,
  \end{equation*}
  while $\pi : (\A,\Lip_\A)\rightarrow(\B,\Lip_\B)$ is a full quantum
  isometry, and $(\Theta,\pi)$ is a module morphism. Now, the only
  isometric isomorphism of the monoid $[0,\infty)$ is the identity, so
  we shall dispense with the notation $\varsigma$.

  As $\Theta$ is a surjective linear isomorphism of Hilbert spaces, it is a
  unitary, which we denote by $V$. As in Proposition
  (\ref{spectral-mvb-zero-prop}), since $(\Theta,\pi)$ is a module
  morphism, we conclude that $\pi = \AdRep{V}$ and moreover, $V$ (as
  it preserves the $D$-norms) maps $\dom{D_\A}$ onto $\dom{D_\B}$.

  Moreover, equivariance means that for all $t\in\R$, we have
  $V U_\A^t V^\ast = U_\B^t$. We then observe that, if
  $\xi \in \dom{D_\A}$ then, as $V$ is continuous and
  $V\dom{D_\A}=\dom{D_\B}$,
  \begin{equation*}
    i D_\A\xi = \lim_{\substack{t\rightarrow 0 \\ t>0}} \frac{U_\A^t\xi - \xi}{t} = \lim_{\substack{t\rightarrow 0 \\ t>0}} \frac{V^\ast U_\B^t  V \xi - \xi}{t} = V^\ast \lim_{\substack{t\rightarrow 0 \\ t>0}} \frac{U_\B^t V \xi- V\xi}{t} = i V^\ast D_\B V \xi \text{.}
  \end{equation*}
  Therefore, as desired, $(\A,\Hilbert_\A,D_\A)$ and
  $(\B,\Hilbert_\B,D_\B)$ are equivalent.

  It is immediate that equivalent metric spectral triples are at
  distance zero for our spectral propinquity, concluding our proof.
\end{proof}

We thus have constructed our distance over the space of metric spectral triples (up to a choice of permissible triple). We now include some examples of applications, which, in particular, prove that our metric is not discrete or otherwise too rigid.

\section{Applications}

We conclude with examples of convergence for the spectral propinquity in this paper, two of which are established in two companion papers. Our first
example concerns simple perturbations of metric spectral triples. A
second family of examples concerns spectral triples on fractals. A
third family of examples concern finite dimensional approximations of
spectral triples on quantum tori.

\subsection{Perturbation of Metric Spectral Triples}

Let $(\A,\Hilbert,D)$ be a metric spectral triple and let $T$ be a
bounded self-adjoint linear operator acting on $\Hilbert$. We write,
as before,
\begin{equation*}
  \dom{\Lip_\A} = \left\{ a\in\sa{\A}: a\dom{D}\subseteq\dom{D},[D,a]\text{ bounded} \right\}\text,
\end{equation*}
and
\begin{equation*}
  \forall a \in \dom{\Lip_\A} \quad \Lip(a) = \opnorm{[D,a]}{}{\Hilbert} \;\text{ and }\; \forall \xi\in\dom{D} \quad \CDN(\xi) = \norm{\xi}{\Hilbert} + \norm{D\xi}{\Hilbert} \text{.}
\end{equation*}
  
Our goal is to study the continuity of the family of spectral triples $(\A,\Hilbert,D + T)$ as $T$ varies in some neighborhood of $0$ in the space of self-adjoint operators on $\Hilbert$. We first note that, of course, using the resolvent identity, $D+T$ converges, in the sense of the resolvent convergence, to $D$, as $T$ converges in norm to $0$. In particular, some of the difficulties addressed in this paper, regarding working on different Hilbert spaces, may appear moot here. It is, however, not quite the case. Indeed, of interest to us is also the convergence of the quantum metrics induced on $\A$ by these varying spectral triples. As we shall see, this is very integral to our approach. This relatively simple example will illustrate the basic scheme to establish convergence for the spectral propinquity, which of course gets more complicated in the next two examples.

We first note that $D + T$ is indeed self-adjoint with the same domain
as $D$. Moreover, it has compact resolvent. For all $z \in \C$ not in the spectrum of $D$, we denote the resolvent $(D+z)^{-1}$ of $D$ at $z$ by $\resolvent{D}{z}$. Since
\begin{align*}
  \resolvent{D+T}{i}
  &= \resolvent{D+T}{i} - \resolvent{D}{i} + \resolvent{D}{i} \\
  &= \left( \resolvent{D+T}{i} T - 1 \right) \resolvent{D}{i}
\end{align*}
and since $\resolvent{D}{i}$ is compact, and since the algebra of
compact operators sits as an ideal in $\B(\Hilbert)$, we conclude that
$\resolvent{D+T}{i}$ is compact as well.

Our purpose is to study the perturbed spectral triple
$(\A,\Hilbert,D+T)$. Our first problem is to prove that this triple is
indeed metric. For any $a\in\dom{\Lip_\A}$, we write
$\Lip_T(a) = \opnorm{[D+T,a]}{}{\Hilbert}$ (and let $\Lip_T(a)=\infty$
if $a\in\sa{\A}\setminus\dom{\Lip_\A}$). By construction, $\Lip_T$ is
defined on a dense Jordan-Lie algebra of $\sa{\A}$ and satisfies the
Leibniz inequality. It is also lower semicontinuous as $D+T$ is
self-adjoint.

In general, it may not be true that $\Lip_T(a) = 0$ if and only if
$a\in\R\unit_\A$, so this is the first question we must address.

let $r = \diam{\A}{\Lip}$ be the diameter of
$(\StateSpace(\A),\Kantorovich{\Lip})$. In the rest of this example,
we assume:
\begin{equation*}
  \opnorm{T}{}{\Hilbert} < \frac{1}{2 r}\text{.}
\end{equation*}
  
Let $a\in\dom{\Lip_\A}$, and let $\varphi\in\StateSpace(\A)$ and
$a' = a-\varphi(a)\unit_\A$. First, note that
\begin{equation*}
  \norm{a-\varphi(a)}{\A} = \sup\left\{|\psi(a-\varphi(a))|:\psi\in\StateSpace(\A) \right\} \leq r \Lip(a) \text.
\end{equation*}

We then compute:
\begin{equation}\label{perturbation-ex-eq1}
  \begin{split}
    |\Lip(a)-\Lip_T(a)| &= |\Lip(a')-\Lip_T(a')| \\
    &= \left|\opnorm{[D,a']}{}{\Hilbert} - \opnorm{[D+T,a']}{}{\Hilbert} \right| \\
    &\leq \opnorm{[T,a']}{}{\Hilbert} \\
    &\leq 2\opnorm{T}{}{\Hilbert} \norm{a'}{\A} \\
    &\leq \underbracket[1pt]{2 \opnorm{T}{}{\Hilbert}
      r}_{\text{constant}} \Lip(a) \text{.}
  \end{split}
\end{equation}

Therefore, for all $a\in\dom{\Lip_\A}$, we conclude:
\begin{equation*}
  \left(1-2 r \opnorm{T}{}{\Hilbert}\right)\Lip(a) \leq \Lip_T(a) \leq \left(1 + 2 r \opnorm{T}{}{\Hilbert}\right) \Lip(a) \text,
\end{equation*}
so
$\frac{1}{1+2r\opnorm{T}{}{\Hilbert}} \Lip_T(a) \leq \Lip(a) \leq
\frac{1}{1-2r\opnorm{T}{}{\Hilbert}} \Lip_T(a)$ for all
$a\in\dom{\Lip_\A}$ --- noting that $2r\opnorm{T}{}{\Hilbert}<1$, so
we do divide by strictly positive numbers. From \cite[Lemma
1.10]{Rieffel98a}, we conclude that $(\A,\Lip_T)$ is indeed a {\qcms}.
  
Thus, $(\A,\Hilbert,D+T)$ is a metric spectral triple.

We also note that the diameter $\diam{\A}{\Lip_T}$ of
$(\StateSpace(\A),\Kantorovich{\Lip_T})$ is no more than
\begin{equation}\label{rt-eq}
  r_T = \frac{1}{1-2r\opnorm{T}{}{\Hilbert}}\diam{\A}{\Lip} \geq r\text.
\end{equation}
We then note that for all $a\in\dom{\Lip_\A}$, following the same method
as above, we get:
\begin{equation} \label{perturbation-eq-1}
  \left|\Lip(a)-\Lip_T(a)\right| \leq 2 \opnorm{T}{}{\Hilbert} r_T \Lip_T(a) \text{.}
\end{equation}
These inequalities will prove helpful for our computations.
  
\bigskip
  
If we now set:
\begin{equation*}
  \forall \xi \in \dom{D} \quad \CDN_T(\xi) = \norm{\xi}{\Hilbert} + \norm{(D+T)\xi}{\Hilbert} 
\end{equation*}
then $\mvb{\A}{\Hilbert}{D} = (\Hilbert,\CDN_T,\A,\Lip_T,\C,0)$ is a
metrical vector bundle.

\bigskip

We now estimate how far apart $(\A,\Lip)$ and $(\A,\Lip_T)$ are with respect
to the propinquity.
  
For any $a,b \in \A$, we set:
\begin{equation*}
  \mathsf{S}(a,b) = \max\left\{ \Lip(a), \Lip_T(b), \left(\frac{1 - 2 r_T \opnorm{T}{}{\Hilbert}}{2 r_T^2 \opnorm{T}{}{\Hilbert}}\right) \norm{a-b}{\A} \right\}
\end{equation*}
which is an L-seminorm on $\A\oplus\A$, using techniques from
\cite{Latremoliere14}. Moreover, if $a\in\A$ with $\Lip(a) = 1$, then
setting
$b = \frac{1}{1+2 r_T \opnorm{T}{}{\Hilbert}}a + \frac{2 r_T
  \opnorm{T}{}{\Hilbert}}{1+2 r_T \opnorm{T}{}{\Hilbert}}\mu(a)$ for
some $\mu\in\StateSpace(\A)$, we observe, first, that by Expression (\ref{perturbation-eq-1}),
\begin{equation*}
  \Lip_T(b) = \frac{1}{1+2r_T \opnorm{T}{}{\Hilbert}} \Lip_T(a) \leq \frac{1 - 2 r_T \opnorm{T}{}{\Hilbert}}{1 + 2 r_T \opnorm{T}{}{\Hilbert}} \Lip(a) \leq 1\text.
\end{equation*}
Moreover, using Expression (\ref{perturbation-ex-eq1}), as well as the following computation:
\begin{align*}
  \frac{1-2 r_T \opnorm{T}{}{\Hilbert}}{2 r_T^2 \opnorm{T}{}{\Hilbert}} \norm{a-b}{\A} 
  &\leq \frac{1-2 r_T \opnorm{T}{}{\Hilbert}}{1 + 2 r_T \opnorm{T}{}{\Hilbert}} \frac{\norm{a-\mu(a)}{\A}}{r_T} \\
  &\leq \frac{\norm{a-\mu(a)}{\A}}{r_T} \leq \frac{r \Lip(a)}{r_T} \leq 1\text,
\end{align*}
that indeed $\mathsf{S}(a,b) = 1\text{.}$

Similarly, if $b\in\A$ with $\Lip_T(b)\leq 1$ then, setting
$a =\frac{1}{1-2r_T \opnorm{T}{}{\Hilbert}}b - \frac{2r_T \opnorm{T}{}{\Hilbert}}{1-2r_T \opnorm{T}{}{\Hilbert}}$, we get again by Expression (\ref{perturbation-eq-1}),
\begin{equation*}
  \Lip(a) \leq (1 + 2 r_T \opnorm{T}{}{\Hilbert}) \Lip_T(a) \leq \frac{1+2 r_T \opnorm{T}{}{\Hilbert}}{1 + 2 r_T \opnorm{T}{}{\Hilbert}} \Lip_T(b) \leq 1 \text,
\end{equation*}
and, with a calculation similar as above, $\mathsf{S}(a,b) = 1$.

In conclusion,
$\tau_T^{\textsf{space}} = (\A\oplus\A,\mathsf{S},\pi_1,\pi_2)$, with
$\pi_j : (a_1,a_2)\in\A\oplus\A\mapsto a_j$ ($j=1,2$), is a Leibniz
tunnel from $(\A,\Lip)$ to $(\A,\Lip_T)$. Its extent is no more than
$\frac{2 r_T^2 \opnorm{T}{}{\Hilbert}}{1 - 2 r_T
  \opnorm{T}{}{\Hilbert}}$ so we get:
\begin{equation*}
  \dpropinquity{}((\A,\Lip),(\A,\Lip_T)) \leq \frac{2 r_T^2 \opnorm{T}{}{\Hilbert}}{1 - 2 r_T \opnorm{T}{}{\Hilbert}}  \text{.}
\end{equation*}
Consequently, since $\lim_{T\rightarrow 0} r_T = r$ by Equation (\ref{rt-eq}), we conclude:
\begin{equation*}
  \operatorname*{\dpropinquity{}-lim}_{\substack{T\rightarrow 0 \\ T \in \sa{\Hilbert} \\ \opnorm{T}{}{\Hilbert}<\frac{1}{2 r} }} (\A,\Lip_T) = (\A,\Lip) \text{.}
\end{equation*}
We record that our tunnel is Leibniz by setting
$F : x,y,l_x,l_y \geq 0 \mapsto x l_y + y l_x$.

Let us define the function
\begin{equation*}
  C_T = \min\left\{ 1 + \frac{1}{\opnorm{T}{}{\Hilbert}}, \frac{1 - 2 r_T \opnorm{T}{}{\Hilbert}}{2 r_T^2 \opnorm{T}{}{\Hilbert}} \right\} \text.
\end{equation*}

Now, for all $\xi,\eta\in\dom{D}$, we set:
\begin{equation*}
  \CDN'(\xi,\eta) = \max\left\{ \CDN(\xi), \CDN_T(\eta), C_T \norm{\xi-\eta}{\Hilbert} \right\} \text{.} 
\end{equation*}

For all $\xi \in \Hilbert$, we note that:
\begin{align*}
  \left|\CDN(\xi) - \CDN_T(\xi)\right| &\leq \norm{T\xi}{\Hilbert} \\
                                       &\leq \opnorm{T}{}{\Hilbert} \norm{\xi}{\Hilbert} \leq \min \left\{\opnorm{T}{}{\Hilbert} \CDN(\xi), \opnorm{T}{}{\Hilbert} \CDN_T(\xi) \right\}
\end{align*}
  
Once more, it is easy to check that $\CDN'$ is a D-norm on
$\Hilbert\oplus\Hilbert$, where $\Hilbert\oplus\Hilbert$ is a module
over $\A\oplus\A$ via the diagonal action:
$(a,b)(\xi,\eta) = (a\xi,b\eta)$. Moreover, if $\xi \in \dom{D}$ with
$\CDN(\xi) = 1$ then, setting
$\eta = \frac{1}{1+\opnorm{T}{}{\Hilbert}}\xi$, we get
$\CDN'(\xi,\eta)=1$, and similarly, if $\eta\in\Hilbert$ with
$\CDN_T(\eta)=1$, then setting
$\xi = \frac{1}{1+\opnorm{T}{}{\Hilbert}}\eta$ we get
$\CDN'(\xi,\eta) = 1$.

Thus, if $\Pi_j : (\xi_1,\xi_2)\in\Hilbert\oplus\Hilbert\mapsto \xi_j$
($j=1,2$), then the quotient norm of $\CDN'$ via $\Pi_1$
(resp. $\Pi_2$) is $\CDN$ (resp. $\CDN_T$).

We now check the Leibniz identity. Let $a,b \in \A$ and
$\xi,\eta\in \Hilbert$. First, we estimate:
  \begin{align*}
    \CDN'(a\xi,b\eta) &= \max\left\{ \CDN(a\xi), \CDN_T(b\eta), C_T \norm{a\xi-b\eta}{\Hilbert} \right\}\\
                      &\leq \max\left\{\begin{array}{l}
                                         (\norm{a}{\A}+\Lip(a))\CDN(\xi)\\
                                         (\norm{b}{\A}+\Lip_T(b))\CDN_T(\eta)\\
                                         C_T\left(\norm{a}{\A}\norm{\xi-\eta}{\Hilbert}+\norm{a-b}{\A}\norm{\eta}{\Hilbert}\right)
                                       \end{array} \right\}\text{.}
  \end{align*}
  In particular,
  \begin{equation*}
    C_T  \left(\norm{a}{\A}\norm{\xi-\eta}{\Hilbert}+\norm{a-b}{\A}\norm{\eta}{\Hilbert}\right) \\
    \leq \norm{a}{\A} \CDN'(\xi,\eta) + \TLip(a,b)\CDN'(\xi,\eta)\text.
  \end{equation*}

  Therefore, $\CDN'$ satisfies the desired Leibniz property. We make an observation: the choice of the value $C_T$, rather than just $1+\frac{1}{\opnorm{T}{}{\Hilbert}}$, is motivated by the Leibniz relation. Thus, the Leibniz relation between the D-norm and the L-seminorm does indeed force some constraint on the D-norm, which, indeed, is why we only need to use the usual extent of the tunnel between base spaces when working with modules (see \cite{Latremoliere18d}). Thus, Leibniz type conditions, which were not used at all early in noncommutative metric geometry \cite{Rieffel00}, then re-introduced by us as a key property to obtain a metric on {\qcms s} up to *-isomorphism in \cite{Latremoliere13,Latremoliere13b}, is now an essential tool encoding some rigidity needed to define our metric between modules.

  To construct our metrical tunnel, we are looking at
  $\Hilbert\oplus\Hilbert$ as a $\C\oplus\C$-Hilbert module for the
  diagonal action, where we set:
  \begin{equation*}
    \mathsf{Q}(z,w) = C_T |z-w| \text{.}
  \end{equation*}
  Of course, $\mathsf{Q}$ is an L-seminorm on $\C\oplus\C$. Moreover,
  it is immediate that $(\C\oplus\C,\mathsf{Q},j_1,j_2)$ is a
  tunnel from $\C$ to $\C$, with $j_1:(z,w)\in\C^2\mapsto z$ and
  $j_2:(z,w)\in\C^2\mapsto w$. Moreover, for all
  $\xi,\xi',\eta,\eta'\in\Hilbert$:
  \begin{align*}
    \mathsf{Q}(\inner{(\xi,\eta)}{(\xi',\eta')}{\C\oplus\C})
    &=C_T \left|\inner{\xi}{\xi'}{\Hilbert} - \inner{\eta}{\eta'}{\Hilbert}\right| \\
    &\leq C_T \left(\inner{\xi-\eta}{\xi'}{\Hilbert} + \inner{\eta}{\xi'-\eta'}{\Hilbert} \right) \\
    &\leq C_T \left(\norm{\xi-\eta}{\Hilbert}\norm{\xi'}{\Hilbert} + \norm{\eta}{\Hilbert}\norm{\xi'-\eta'}{\Hilbert}  \right)\\
    &\leq 2 \CDN'(\xi,\eta) \CDN'(\xi',\eta') \text{.}
  \end{align*}
    
  We set:
  \begin{equation*}
    \mathds{P}_T = \left( \Hilbert\oplus\Hilbert,\CDN',\C\oplus\C,\mathsf{Q},\A\oplus\A,\mathsf{S} \right)
  \end{equation*}
  We then have checked that, for any self-adjoint operator $T$ on
  $\Hilbert$ with $\opnorm{T}{}{\Hilbert} < \frac{1}{2 r}$, we have constructed a Leibniz metrical tunnel:
  \begin{equation*}
    \tau_T = \left(\mathsf{P}_T, (\Pi_1,j_1,\pi_1), (\Pi_2,j_2,\pi_2) \right)
  \end{equation*}
  such that:
  \begin{equation*}
    \tunnelextent{\tau}{} \leq \frac{1}{C_T} \text,
  \end{equation*}
  and thus, we conclude:
  \begin{equation*}
    \dmetpropinquity{F}\left(\mvb{\A}{\Hilbert}{D+T},\mvb{\A}{\Hilbert}{D}\right) \leq \frac{1}{C_T} \text{.}
  \end{equation*}

  We now turn to estimating the covariant propinquity. Under our
  conditions, \cite[IX, Theorem 2.12, p. 502]{Kato} applies with
  $a=\opnorm{T}{}{\Hilbert}$, $\beta = 0$ and $M=1$ (the last two
  quantities following from the fact that the spectrum of $iD$ is
  purely imaginary), so that for all $t \in [0,\infty)$:
  \begin{equation*}
    \opnorm{(\exp(itD) - \exp(it(D+T)))(iD+1)^{-1}}{}{\Hilbert} \leq t \opnorm{T}{}{\Hilbert} \text{.}
  \end{equation*}

  Let $\xi$ with $\CDN(\xi)\leq 1$, so that
  $\norm{D\xi}{} + \norm{\xi}{} \leq 1$, which in particular implies
  that $\norm{(iD+1)\xi}{}\leq 1$. Then for all $t\in[0,\infty)$:
  \begin{multline*}
    \norm{(\exp(itD) - \exp(it(D+T)))\xi}{\Hilbert} \\
    \begin{split}
      &=\norm{(\exp(itD) - \exp(it(D+T)))(iD+1)^{-1} (iD+1) \xi}{\Hilbert} \\
      &\leq \opnorm{(\exp(itD) - \exp(it(D+T)))(iD+1)^{-1}}{}{\Hilbert} \\
      &\leq |t| \opnorm{T}{}{\Hilbert} \text{.}
    \end{split}
  \end{multline*}

  The same reasoning applies to give us, for all $\xi \in \Hilbert$
  with $\CDN_T(\xi)\leq 1$ and $t \in [0,\infty)$:
  \begin{multline*}
    \norm{(\exp(itD) - \exp(it(D+T)))\xi}{\Hilbert} \\
    \begin{split}
      &=\norm{(\exp(it(D+T-T)) - \exp(it(D+T)))(i(D+T)+1)^{-1} (i(D+T)+1) \xi}{\Hilbert} \\
      &\leq \opnorm{(\exp(itD) - \exp(it(D+T)))(i(D+T)+1)^{-1}}{}{\Hilbert} \\
      &\leq |t| \opnorm{T}{}{\Hilbert} \text{.}
    \end{split}
  \end{multline*}

  Let $\varepsilon > 0$ be given. First, there exists
  $C \in \left( 0 , \frac{1}{2r} \right) $ such that if
  $\opnorm{T}{}{\Hilbert} < C$ then $0<\frac{1}{C_T}<\frac{\varepsilon}{3}$.
  
  Now, let $t \in [0,\infty)$ such that $t < \frac{1}{\varepsilon}$. Let $c \in (0,1)$ such that $\frac{c}{1-c}<\frac{\varepsilon}{3}$.

  Let $T \in \sa{\B(\Hilbert)}$ be chosen so that $\opnorm{T}{}{\Hilbert} \leq \delta = \min \left\{ C, \frac{\varepsilon^2}{3}, c \right\}$.

  First, note that for all $\xi \in \dom{D}$:
  \begin{equation*}
    \left| \CDN(\xi) - \CDN_T(\xi) \right| \leq \opnorm{T}{}{\Hilbert} \norm{\xi}{\Hilbert} \leq c \norm{\xi}{\Hilbert} \text,
  \end{equation*}
  so if $\norm{\xi}{\Hilbert}\leq 1$, then $\CDN_T(\xi) \geq \CDN(\xi) - \opnorm{T}{}{\Hilbert} \geq 1 - c$. So $\frac{|\CDN(\xi)-\CDN_T(\xi)|}{\CDN_T(\xi)}\leq \frac{c}{1-c}$ whenever $\norm{\xi}{\Hilbert} \leq 1$.
  
  If $\xi \in \Hilbert$ with $\CDN(\xi)\leq 1$, then for all
  $(\eta,\eta')\in\Hilbert\oplus\Hilbert$ with
  $\CDN'(\eta,\eta')\leq 1$ (so that
  $\norm{\eta-\eta'}{\Hilbert} < \frac{\varepsilon}{3}$):
  \begin{multline*}
    \left|\inner{\eta}{\exp(itD)\xi}{\Hilbert} - \inner{\eta'}{\exp(it(D+T))\frac{1}{\CDN_T(\xi)}\xi}{\Hilbert}\right| \\
    \begin{split}
      &\leq \norm{\eta-\eta'}{\Hilbert} + \norm{(\exp(itD)-\exp(it(D+T))) \xi}{\Hilbert} \\
      &\quad + \norm{\exp(it(D + T))\xi-\exp(it(D+T))\frac{1}{\CDN_T(\xi)}\xi}{\Hilbert} \\
      &< \frac{\varepsilon}{3} + |t|\opnorm{T}{}{\Hilbert} + \norm{(1-\frac{1}{\CDN_T(\xi)})\xi}{\Hilbert} \\
      &= \frac{2 \varepsilon}{3} + \norm{\frac{\CDN(\xi)-\CDN_T(\xi)}{\CDN_T(\xi)}\xi}{\Hilbert} \\
      &\leq \frac{2\varepsilon}{3} + \frac{c}{1-c} \\
      &< \varepsilon \text{.}
    \end{split}
  \end{multline*}
  The same computation can be made for all $\varphi\in\StateSpace(\A)$
  and $\xi \in \Hilbert$ with $\CDN_T(\xi)\leq 1$. Consequently, we
  have shown that:
  \begin{equation*}
    \tunnelmagnitude{\tau}{\varepsilon} \leq \varepsilon \text{.}
  \end{equation*}

  Therefore:

    \begin{multline*}
      \forall\varepsilon > 0 \quad \exists \delta > 0 \quad \forall T \in \sa{\B(\Hilbert)} \\
      \opnorm{T}{}{\Hilbert} < \delta \implies
      \lspectralpropinquity{}((\A,\Hilbert,D), (\A,\Hilbert,D+T)) <
      \varepsilon \text{,}
    \end{multline*}

    where $\lspectralpropinquity = \spectralpropinquity{F}$ where: $F:x,y,z,t \in [0,\infty)^4\mapsto x z + y t$, $F_{\mathsf{inner}} : x,y,z\in [0,\infty)^3\mapsto (x+y)z$, and $F_{\mathsf{mod}} : x,y \in [0,\infty)^2\mapsto 2 x y$, i.e. the usual Leibniz conditions. This concludes our example.

    \subsection{Approximation of Spectral Triples on Fractals}

    The {\SiepG} $\SG{\infty}$ is a fractal, constructed as the
    attractor set of an iterated function system (IFS) of affine
    functions of the plane. Specifically, let
    \begin{equation*}
      v_0 = \begin{pmatrix} 0 \\ 0 \end{pmatrix}\text{, }v_1 = \begin{pmatrix} 1 \\ 0 \end{pmatrix} \text{ and }v_2 = \begin{pmatrix} \frac{1}{2} \\ \frac{\sqrt{3}}{2} \end{pmatrix} \text{.}
    \end{equation*}
    We define three similitudes of the plane by letting for each
    $j\in\{0,1,2\}$,
    \begin{align*}
      T_j &: x \in \R^2 \longmapsto \frac{1}{2} \left( x + v_j \right) \in \R^2 \text{.}
    \end{align*}
    we define various triangles by induction:
    \begin{equation*}
      \begin{cases}
        \Delta_{0,1} = [v_0,v_1]\cup[v_1,v_2]\cup[v_2,v_0]\text,\\
        \Delta_{n+1,j + r 3^n} = T_r\Delta_{n,j}\text{ for all
          $n\in\N$, $j\in\{1,\ldots,3^{n}\}$, and $r\in\{0,1,2\}$.}
      \end{cases}
    \end{equation*}
    For each $n\in\N$, we let
    $\SG{n} = \bigcup_{j=1}^{3^n} \Delta_{n,j}$. The \emph{\SiepG}
    $\SG{\infty}$ is the closure of $\bigcup_{n\in\N}\SG{n}$.

\begin{figure}\label{SiepG-fig}
  \centering
  \begin{tabular}{m{8cm} m{1cm} m{2cm}}
    \includegraphics[height=0.8in]{./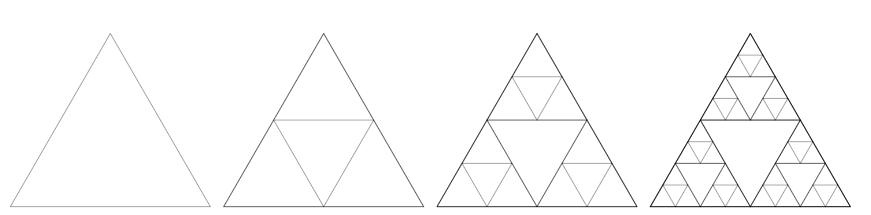} & \vspace*{1.6cm} $\ldots \longrightarrow$ \vspace*{0.3in} & \includegraphics[height=0.8in]{./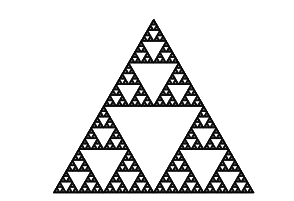}
  \end{tabular}
  \caption{The {\SiepG} is a limit of graphs in the plane for the
    Hausdorff distance}
\end{figure}
The {\SiepG} is a prototype fractal, and has been extensively
studied. In \cite{Lapidus08}, a spectral triple was constructed on the
{\SiepG}. Since the {\SiepG} is, naturally, the limit of the graphs
$\SG{n}$ as $n$ goes to $\infty$, a natural question is whether the
spectral triple of \cite{Lapidus08} is the limit of spectral triples
on $\SG{n}$ as $n$ varies in $\N$.

We answered this question positively in \cite{Latremoliere20a}, using
the metric defined in the present paper. We briefly recall the
construction of the spectral triples involved, which are constructed
using direct sums of spectral triples over the unit interval,
appropriately re-scaled.

Let $\mathrm{CP}$ be the unital Abelian C*-algebra of all $\C$-valued
continuous functions $f$ over $[0,1]$ such that $f(0) = f(1)$:
\begin{equation*}
  \mathrm{CP} = \left\{ f \in C([-1,1]) : f(-1) = f(1) \right\} \text{.}
\end{equation*}
The Gelfand spectrum of $\mathrm{CP}$ is of course homeomorphic to the
unit circle $\T = \{z\in\C : |z|=1\}$ in $\R^2$.

We now define a spectral triple on $\mathrm{CP}$. Let $\mathscr{J}$ be
the Hilbert space closure of $\mathrm{CP}$ for the inner product
$(f,g) \in \mathrm{CP} \mapsto \int_{-1}^1 fg$. As usual, we identify
$f \in \mathrm{CP}$ with the multiplication operator by $f$ on
$\mathscr{J}$.

For each $k \in \Z$, let $e_k : t \in [-1,1] \mapsto \exp(i \pi k t)$
--- of course, $e_k \in \mathscr{J}$. We define $D$ as the closure of
the linear extension of the map defined as:
\begin{equation*}
  \forall k \in \Z \quad D e_k =  \pi \left( k + \frac{1}{2} \right)e_k \text{.} 
\end{equation*}
A standard argument establishes that, indeed,
$\left( \mathrm{CP}, \mathscr{J}, D \right)$ is a spectral triple over
$\mathrm{CP}$ \cite{Lapidus08}. Moreover, the Connes metric induced
by this spectral triple on $\T$, seen as the Gelfand spectrum of
$\mathrm{CP}$, is the usual geodesic distance of $\T$ (i.e. the
distance between two points is the smallest of the lengths of the two
arcs between these points).

\medskip

We now use the spectral triple $(\mathrm{CP},\mathscr{J},D)$ in order
to construct a spectral triple on the unit interval $[0,1]$. If
$f \in C([0,1])$, then the map $t \in [-1,1]\mapsto f(|t|)$ is in
$CP$. Let $\varpi$ be the faithful *-representation of $C([0,1])$ on
$\mathscr{J}$ defined by
\begin{equation*}
  \forall f \in C([0,1]), \quad \forall \xi \in \mathscr{J}, \quad \varpi(f)\xi : t \in [-1,1] \mapsto f(|t|)\xi(t) \text{.}
\end{equation*}
It is easy to check that $\left( C([0,1]), \mathscr{J}, D \right)$ is
a metric spectral triple over $C([0,1])$ which induces the usual
metric on $[0,1]$.

\medskip

We now construct our spectral triple over $\SG{n}$ for all
$n\in\N\cup\{\infty\}$. Let $n \in \N$ and $j \in \{1,\ldots,3^n\}$ be
given. Let $w_0,w_1,w_2 \in V_n$ be the vertices of the triangle
$\Delta_{n,j}$ listed in counter-clockwise order. We parametrize
$\Delta_{n,j}$ by defining the following map:
\begin{equation*}
  r_{n,j} : t \in \left[ 0, 1 \right] \longmapsto
  \begin{cases}
    (1- 3t ) w_0 + 3 t w_1 \text{ if $3 t \in [0,1]$,} \\
    (2- 3 t) w_1 + (3 t-1) w_2 \text{ if $3 t \in [1,2]$,} \\
    (3- 3 t) w_2 + (3 t-2) w_0 \text{ if $3 t \in [2,3]$.}
  \end{cases}
\end{equation*}

We can define a spectral triple
$\left(C(\Delta_{n,j}),\pi_{n,j}, \frac{2^n}{3} D\right)$ where
$\pi_{n,j}$ is the representation of $C(\Delta_{n,j})$ on
$\mathscr{J}$ which sends $f \in C(\Delta_{n,j})$ to the
multiplication operator by $f\circ r_{n,j} \in \mathrm{CP}$ on
$\mathscr{J}$. It is now easy to check that in particular,
$\Delta_{n,j}$ with the induced {\MongeKant} is isometric to
$\Delta_{n,j}$ with the geodesic distance induced on $\Delta_{n,j}$ by
the Euclidean metric $\R^2$.

Now, let $n \in \N\cup\{\infty\}$ and write
$\Hilbert_n = \oplus_{k = 1}^n \oplus_{j = 0}^{3^k} \mathscr{J}$. For
each $k\in\N$ with $k\leq n$ and for all $j\in \{1,\ldots, 3^n\}$, we
define $q_{k,j} : C(\SG{n}) \twoheadrightarrow C(\Delta_{k,j})$ as the
quotient map which restricts a function in $C(\SG{n})$ to
$\Delta_{k,j}\subseteq\SG{n}$. We now define a representation of
$C(\SG{n})$ on $\Hilbert_n$ as the diagonal representation by setting
for all $f \in C(\SG{n})$ and
$\xi = (\xi_{k,j})_{k\in\N_n,j\in\{1,\ldots,3^k\}}$:
\begin{equation*}
  \pi_n(f)\xi = \left( \pi_{k,j}(q_{k,j}(f))\xi_{k,j} \right)_{k\in\N_n, j\in\{1,\ldots,3^k\}} \text{.}
\end{equation*}
Last, for all
$\xi = (\xi_{k,j})_{k\in\N_n,j\in\{1,\ldots,3^k\}} \in \Hilbert_n$, we
set:
\begin{equation*}
  D_n\xi = \left( \frac{2^k}{3} D \xi_{k,j} \right)_{k\in\N_n,j\in\{1,\ldots,3^k\}}\text{.}
\end{equation*}

For all $n\in\N\cup\{\infty\}$, the triple
$\left(C(\SG{n}),\Hilbert_n,D_n\right)$ is a metric spectral triple,
as seen in \cite{Lapidus08}. Moreover, the Connes metric induced on
$C(\SG{n})$ by $\left(C(\SG{n}),\Hilbert_n,D_n\right)$ is the geodesic
distance on $\SG{n}$ (not the restriction of the metric from $\R^2$).

We prove in \cite{Latremoliere20a}:

\begin{theorem}\label{SiepG-thm}
  The following limit holds:
  \begin{equation*}
    \lim_{n\rightarrow\infty} \spectralpropinquity{F}\left(\left(C(\SG{n}),\Hilbert_n,D_n\right), \left(C(\SG{\infty}),\Hilbert_\infty,D_\infty\right)\right) = 0 \text.
  \end{equation*}
\end{theorem}

In \cite{Latremoliere20a}, Theorem (\ref{SiepG-thm}) is actually
established for a larger class of fractals, called piecewise $C^1$
fractal curves, which include the {\SiepG}, as well as its Harmonic
cousin, called the Harmonic gasket, thus providing a long list of
interesting examples of convergence of spectral triples, from the
noncommutative geometric study of fractals.

\subsection{Approximation of Some Spectral Triples on Quantum Tori}

A common example of a finite dimensional, quantum space used in
mathematical physics as an approximation of the $2$-torus $\T^2$,
where $\T = \{ z \in \C : |z| = 1\}$ is the C*-algebra generated by
the so-called clock-and-shift matrices:
\begin{equation}\label{Clock-Shift-eq}
  S_n =
  \begin{pNiceMatrix}
    0      & 1 & 0      & \Cdots &  0 \\
    \Vdots & \Ddots & \Ddots & \Ddots  & \Vdots \\
    &        &        &         & 0\\
    0      & \Cdots &        &         0 & 1 \\
    1 & 0 & \Cdots & & 0
  \end{pNiceMatrix} \text{ and }C_n =
  \begin{pNiceMatrix}
    1 & & & \\
    & \exp\left(\frac{2i\pi}{n}\right) & & \\
    & & \Ddots & \\
    & & & \exp\left(\frac{2i(n-1)\pi}{n}\right)
  \end{pNiceMatrix} \text{,}
\end{equation}
Such approximations are discussed informally in various contexts, from
quantum mechanics in finite dimension \cite{Weyl} to matrix models in
quantum field theory and string theory
\cite{Kimura01,Schreivogl13,Barrett15,Connes97,Seiberg99}. A first
formalization of this heuristics was given in \cite{Latremoliere13c},
where we proved that there indeed exist quantum metrics on
$C^\ast(C_n,S_n)$ such that the sequence $(C^\ast(C_n,S_n))_{n\in\N}$
converges, in the sense of the propinquity, to $C(\T^2)$, when $\T^2$
is endowed with its usual geodesic metric as a subspace of $\R^2$.

\medskip

The C*-algebra $C^\ast(C_n,S_n)$ is *-isomorphic to the C*-algebra of
$n\times n$ matrices. A mean to define a geometry on $C^\ast(C_n,S_n)$
is to define over it a spectral triple. We wish to show that 
spectral triple on $C^\ast(C_n,S_n)$ converges to a natural spectral
triple on $C(\T^2)$ for the spectral propinquity. We describe our
construction in \cite{Latremoliere21a} here for this special case.

\medskip

The C*-algebra $C(\T^2)$ is the universal C*-algebra generated by two
commuting unitaries; for instance, we can set
$U : (z_1,z_2) \in \T^2 \mapsto z_1$ and
$V : (z_1,z_2)\in\T^2\mapsto z_2$ and note that
$C(\T^2)=C^\ast(U,V)$. We define a unique, faithful *-representation
of $C(\T^2)$ on $\Hilbert_\infty = \ell^2(\Z^2)$ by setting, for all
$\xi \in \ell^2(\Z^2)$:
\begin{multline*}
  \pi_\infty(U)\xi : (m_1,m_2)\in\Z^2\mapsto \xi(m_1-1,m_2) \\ \text{
    and }\pi_\infty(V)\xi : (m_1,m_2)\in\Z^2\mapsto \xi(m_1,m_2-1)
  \text.
\end{multline*}
Let
$\mathrm{dom} = \left\{ \xi \in \ell^2(\Z^2) :
  \left((|m_1|+|m_2|)\xi(m_1,m_2)\right) \in \ell^2(\Z^2)
\right\}$. We also define, for all $\xi \in \mathrm{dom}$:
\begin{multline*}
  \partial_U\xi : (m_1,m_2)\in\Z^d \mapsto i m_1 \xi(m_1,m_2) \\
  \text{ and } \partial_V\xi : (m_1,m_2)\in\Z^d \mapsto i m_2
  \xi(m_1,m_2) \text.
\end{multline*}

The canonical moving frame of the compact Lie group $\T^2$ is given by
$[\partial_U,\pi_\infty(\cdot)]$ and $[\partial_V,\pi_\infty(\cdot)]$.

\medskip

Fix $n\in\N\setminus\{0\}$. The C*-algebra $C^\ast(C_n,S_n)$ is the
universal C*-algebra generated by two unitaries whose multiplicative
commutator is $\zeta_n = \exp\left(\frac{2 i \pi}{n}\right)$, and it is *-isomorphic to the C*-algebra $\alg{M}_n$ of $n\times n$ matrices. Let $\Hilbert_n$ be the Hilbert space obtained by endowing $\alg{M}_n = C^\ast(C_n,S_n)$ with the usual inner product $(a,b)\in\alg{M}_n \mapsto \inner{a}{b}{n} \coloneqq \mathrm{trace}(a^\ast b)$, with $\mathrm{trace}$ the usual normalized trace over $\alg{M}_n$. As $C^\ast(C_n,S_n)$ is finite dimensional, it is complete for the Hilbert norm associated with $\inner{\cdot}{\cdot}{n}$. Note that $C^\ast(C_n,S_n)$ acts on $\Hilbert_n$ canonically on both the left and the right, in such a way that $\Hilbert_n$ is a bimodule over $C^\ast(C_n,S_n)$.

As explained, for instance, in \cite{Kimura01,Schreivogl13,Barrett15}, natural substitutes for the derivations $\partial_U$ and
$\partial_V$, when working with fuzzy tori, is given by the commutators:
\begin{equation*}
  \frac{n}{2\pi} [C_n,\cdot] \text{ and } \frac{n}{2 \pi} [S_n^\ast,\cdot]
\end{equation*}
seen as operators on $\Hilbert_n$. However, we will want to work with skew-adjoint operators in order to define a Dirac operator. With this in mind, we are able to prove the following convergence result.

\begin{theorem}[{\cite{Latremoliere21a}}]
  We use the notations developed in this subsection.
  
  Let $\gamma_1,\gamma_2,\gamma_3,\gamma_4$ be $4\times 4$ matrices
  such that
  \begin{equation*}
    \forall j,s \in \{1,\ldots,4\} \quad \gamma_j \gamma_s + \gamma_s \gamma_j = \begin{cases}
      -2 \text{ if $j=s$,} \\
      0  \text{ otherwise.}
    \end{cases}
  \end{equation*}
  
  For all $n\in\N$, we define the operator $D_n$ on
  $\ell^2(H_n)\otimes\C^4$, by
  \begin{multline*}
    D_n = \frac{n}{2\pi} \bigg(
    \left[\frac{C_n+C_n^\ast}{2},\cdot\right]\otimes \gamma_1 +
    \left[\frac{C_n-C_n^\ast}{2i},\cdot\right]\otimes \gamma_2 \\
    + \left[\frac{S_n+S_n^\ast}{2},\cdot\right]\otimes \gamma_3 +
    \left[\frac{S_n^\ast - S_n}{2i},\cdot\right]\otimes \gamma_4
    \bigg)\text.
  \end{multline*}

  We also define the operator $D_\infty$ from
  $\mathrm{dom}\otimes\C^4$ to $\ell^2(\Z^2)\otimes\C^4$ by setting:
  \begin{multline*}
    D_\infty =
    \pi_\infty\left(\frac{U+U^\ast}{2}\right)\partial_V\otimes
    \gamma_1 +
    \pi_\infty\left(\frac{U-U^\ast}{2}\right)\partial_V\otimes
    \gamma_2 \\ +
    \pi_\infty\left(\frac{V+V^\ast}{2}\right)\partial_U\otimes
    \gamma_3 +
    \pi_\infty\left(\frac{V^\ast-V}{2}\right)\partial_U\otimes
    \gamma_4 \text.
  \end{multline*}

  For each $n\in\N\cup\{\infty\}$, the triple
  $(\A_n,\mathscr{J}_n,D_n)$ is a metric spectral triple over $\A_n$,
  and moreover:
  \begin{equation*}
    \lim_{n\rightarrow\infty} \spectralpropinquity{5}((\A_n,\mathscr{J}_n,D_n),(\A_\infty,\mathscr{J}_\infty,D_\infty)) = 0\text,
  \end{equation*}
  where $\spectralpropinquity{5}$ is the spectral propinquity for the
  admissible triple $(F,F_{\mathsf{inner}},F_{\mathsf{mod}})$, with
  \begin{multline*}
    F:x,y,l_x,l_y\in\R_+\mapsto x l_y + y l_x\text{, }G : x,y,z\in\R_+
    \mapsto 5(x+y)z \\ \text{ and }H:x,y\in\R_+\mapsto 2 x^2 y^2
    \text.
  \end{multline*}
\end{theorem}

\begin{remark}
  We note that we relax a little bit the Leibniz condition for the
  previous result, to accommodate some of the tunnels constructed in
  \cite{Latremoliere21a}.
\end{remark}

\medskip

In fact, the above construction can be generalized to noncommutative
limits. In general, we require the introduction of certain auxiliary
unitaries to construct our spectral triples over quantum tori, so the
description is more involved. We establish in \cite{Latremoliere21a}
that, for any quantum torus, there exists a natural spectral triple,
constructed from the dual action of the torus, which is the limit of
spectral triples on fuzzy tori, with the obvious requirement on the
twist of the fuzzy tori (coded in a $2$-cocycle) to converge to the
twist of the quantum torus (similarly coded).

\bibliographystyle{amsplain} \bibliography{../thesis} \vfill

\end{document}